\newcommand{\calS}{\mathcal{S}}
\newcommand{\calT}{\mathcal{T}}
\newcommand{\calM}{\mathcal{M}}
\newcommand{\calN}{\mathcal{N}}
\newcommand{\bbM}{\mathbb{M}}
\newcommand{\bbQ}{\mathbb{Q}}
\newcommand{\restr}{\upharpoonright}
\newcommand{\forces}{\Vdash}
\newcommand{\dom}{{\rm dom}}
\newcommand{\urltilde}{\kern -.15em\lower .7ex\hbox{~}\kern .04em}
\newcommand\M{\mathcal{M}}
\newcommand\axiom{\mathrm}
\newcommand\PFA{\axiom{PFA}}
\newcommand\ZFC{\axiom{ZFC}}
\newcommand\ORD{\axiom{ORD}}
\theoremstyle{plain}
\newtheorem{thm}{Theorem}[section]
\newtheorem{lem}[thm]{Lemma}
\newtheorem{prop}[thm]{Proposition}
\newtheorem{cor}[thm]{Corollary}
\newtheorem{fact}[thm]{Fact}
\theoremstyle{definition}
\newtheorem{defn}[thm]{Definition}
\newtheorem{claim}[thm]{Claim}
\theoremstyle{remark}
\newtheorem{rem}[thm]{Remark}
\DeclareMathOperator{\otp}{{\rm o.t.}}
\numberwithin{equation}{section}
\begin{document}

\title[PFA and the nonstationary ideal]{PFA and precipitousness of the nonstationary ideal} 


\author{B. Veli\v ckovi\'c}
\address{Equipe de Logique Math\'ematique,
Institut de Math\'ematiques de Jussieu,
Universit\'e Paris Diderot, Paris, France}
\email{boban@math.univ-paris-diderot.fr}
\urladdr{http://www.logique.jussieu.fr/~ boban}

\begin{abstract}
We apply Neeman's method of forcing with side conditions
to show that PFA does not imply the precipitousness of
the nonstationary ideal on $\omega_1$.
\end{abstract}

\keywords{proper forcing axiom, nonstationary ideal, side conditions}
\subjclass[2000]{Primary: 03E35, 03E55, 03E65; Secondary: 03E05}
\maketitle

\section*{Introduction}

One of the main consequences of Martin's Maximum (MM) is that  the nonstationary ideal
on $\omega_1$ (${\rm NS}_{\omega_1}$) is saturated and hence also precipitous. This was already shown
by Foreman, Magidor and Shelah in \cite{FMS}, where the principle MM was  introduced.
It is natural to ask if the weaker Proper Forcing Axiom (PFA) is sufficient to imply
the same conclusion.  In the 1990s the author adapted the argument of Shelah
in \cite[Chapter XVII]{Sh_P}, where PFA is shown to be consistent with the existence of a function
$f:\omega_1 \to \omega_1$ dominating all the canonical functions below $\omega_2$, to
show that PFA does not imply the precipitousness of ${\rm NS}_{\omega_1}$.
This result was also obtained independently by Shelah and perhaps several other people,
but since it was never published it was considered a folklore result in the subject.

Some twenty years later  Neeman \cite{Neeman} introduced a method for iterating proper forcing by using conditions
which consist of two components: the \textit{working part}, which is a function of finite support, and the \textit{side condition},
which is a finite $\in$-chain of models of one of two types. The interplay between the working parts and
the side conditions allows us to show that the iteration of proper forcing notions is proper.
Neeman used this new iteration technique to give another proof of the consistency of PFA
as well as several other interesting applications.

In this paper we adapt Neeman's iteration technique to give another proof of the consistency
of PFA together with ${\rm NS}_{\omega_1}$ being non precipitous. Our modification consists of
two parts. First, we consider a {\em decorated} version of the side condition poset. This version
is already present in \cite{Neeman}. Its principal virtue is that it guarantees that the generic
sequence of models added by the side condition part of the forcing is continuous.
The second modification is more subtle. To each condition $p$ we attach the {\em height function}
${\rm ht}_p$ which is defined on certain pairs of ordinals. In order for a condition $q$ to extend $p$
we require that ${\rm ht}_q$ extends ${\rm ht}_p$. Now, if $G$ is a generic filter, we can define
the derived height function ${\rm ht}_G$ from which we can read off the functions
$h_\alpha$, for $\alpha <\theta$, where $\theta$ is the length of the iteration.
Each of these functions is defined on a club in $\omega_1$ and takes values in $\omega_1$.
If ${\mathcal U}$ is a generic ultrafilter over ${\mathcal P}(\omega_1)/{\rm NS}_{\omega_1}$
we can consider $[h_\alpha]_{\mathcal U}$, the equivalence class of $h_\alpha$ modulo $\mathcal U$,
as an element of ${\rm Ult}(V,{\mathcal U})$. The point is that the family
$\{ [h_\alpha]_{\mathcal U}: \alpha <\theta\}$ cannot be well ordered by $\leq_{\mathcal U}$
and hence ${\rm Ult}(V,{\mathcal U})$ will not be well-founded.
Now, our requirement on the height functions introduces some complications in the proof
of Neeman's lemmas required to show the properness of the iteration. The main change concerns
the pure side condition part of the forcing. Our side conditions consist of pairs
$(\M_p,d_p)$ where $\M_p$ is the $\in$-chain of models and $d_p$ is the decoration.
If a model $M$ occurs in $\M_p$ we can form the restriction $p|M$, which is simply $(\M_p\cap M,d_p\restriction M)$.
Then $p|M$ is itself a side condition and belongs to $M$.
The problem is that we do not know that $p$ is stronger than $p|M$, simply because
 ${\rm ht}_p$ may not extend ${\rm ht}_{p|M}$. However, for many models $M$ we will be
able to find a {\rm reflection} $q$ of $p$ inside $M$ such that ${\rm ht}_q$ does extend
${\rm ht}_p \restriction M$ and we will then use $q$ instead of $p |M$.
This requires reworking some of the lemmas of \cite{Neeman}. Since our main changes involve
the side condition part of the forcing, we present a detailed proof that after forcing
with the pure side condition poset the nonstationary ideal on $\omega_1$ is not precipitous. 
When we add the working parts we need to rework some of Neeman's iteration lemmas, but the modifications
are  mostly straightforward, so we only sketch the arguments and the refer the reader to \cite{Neeman}. 
Finally, let us mention that precipitousness of ideals in forcing extensions was studied by
Laver \cite{Laver}, and while we do not use directly results from that paper, some of our ideas
were inspired by \cite{Laver}.

The paper is organized as follows. In \S 1 we recall some preliminaries about canonical functions
and precipitous ideals. In \S 2 we introduce a modification of the pure side condition forcing with models
of two types from \cite{Neeman}. In \S 3 we prove a factoring lemma for our modified pure side
condition poset and use it to show that after forcing with this poset the nonstationary ideal
is non precipitous. In \S 4 we introduce the working parts and show how to complete the proof of
the main theorem.

In order to read this paper, a fairly good understanding of \cite{Neeman} is necessary and
the reader will be referred to it quite often. Our notation is fairly standard and can be found in
\cite{Sh_P} and \cite{Jech} to which we refer the reader for background information on precipitous ideals,
proper forcing, and all other undefined concepts. Let us just mention that a family $\mathcal F$ of subsets
of a set $K$ is called {\em stationary} in $K$ if for every function $f:K^{<\omega}\to K$ there is $M\in \mathcal F$
which is closed under $f$, ie. such that $f[M^{<\omega}]\subseteq M$.

\section{Preliminaries}

We start by recalling the relevant notions concerning precipitous ideals from \cite{JMMP}.
Suppose $\mathcal I$ is a $\kappa$-complete ideal on a cardinal $\kappa$ which contains all singletons.
Let ${\mathcal I}^+$ be the collection of all $\mathcal I$-positive subsets of $\kappa$,
i.e. ${\mathcal I}^+={\mathcal P}(\kappa)\setminus \mathcal I$.
We consider ${\mathcal I}^+$ as a forcing notion under inclusion. If $G_{\mathcal I}$ is a $V$-generic
over ${\mathcal I}^+$ then $G_{\mathcal I}$ is an ultrafilter on ${\mathcal P}^V(\kappa)$ which
extends the dual filter of $\mathcal I$. We can then form the generic ultrapower ${\rm Ult}(V,G_{\mathcal I})$
of $V$ by $G_{\mathcal I}$ in the usual way, i.e. it is simply $(V^\kappa \cap V)/G_{\mathcal I}$.
Recall that $\mathcal I$ is called \textit{precipitous} if the maximal condition forces that this ultrapower
is well founded. There is a convenient reformulation of this property in terms of games.

\begin{defn}\label{game} Let $\mathcal I$ be a $\kappa$-complete ideal on a cardinal $\kappa$ which
contains all singletons. The game ${\mathcal G}_{\mathcal I}$ is played between two players
${\rm I}$ and ${\rm II}$ as follows.

\[
\begin{array}[c]{ccccccccc}
{\rm I} : & E_0 \phantom{E_1} & E_2 \phantom{E_3}& \ \ \cdots \ \ & E_{2n} \phantom{E_{2n+1}}& \ \ \cdots \ \ \\
\hline
{\rm II} : & \phantom{E_0} E_1 & \phantom{E_2} E_3 & \ \ \cdots \ \ & \phantom{E_{2n}} E_{2n+1} & \ \ \cdots \ \ \\
\end{array}
\]

\noindent We require that $E_n\in {\mathcal I}^+$ and $E_{n+1}\subseteq E_n$, for all $n$.
The first player who violates these rules loses. If both players respect the rules, we say that
${\rm I}$ wins the game if $\bigcap _n E_n=\emptyset$. Otherwise, ${\rm II}$ wins.
\end{defn}

\begin{fact}[Galvin, Jech, Magidor \cite{GJM}]\label{game char} The ideal $\mathcal I$ is precipitous
if and only if player ${\rm I}$ does not have a winning strategy in ${\mathcal G}_{\mathcal I}$.
\qed
\end{fact}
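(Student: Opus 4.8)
The plan is to prove both implications by contraposition, via the following reformulation. Since ${\rm Ult}(V,\dot G)$ satisfies enough of $\ZFC$, it is well founded iff its ordinals --- which are exactly the classes $[f]_{\dot G}$ with $f\colon\kappa\to\mathrm{Ord}$ in $V$ --- are well ordered by the ultrapower's $\in$. Hence $\mathcal I$ fails to be precipitous iff there are $A\in\mathcal I^+$ and names $\langle\dot f_n:n<\omega\rangle$ with $A\Vdash$ ``each $\dot f_n$ is a function $\kappa\to\mathrm{Ord}$ in $V$, and $[\dot f_{n+1}]_{\dot G}<[\dot f_n]_{\dot G}$ for every $n$''. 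I will also use the elementary fact that a name which a condition forces to lie in $V$ is decided, as a check-name, along a maximal antichain below that condition.

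For ``${\rm I}$ has no winning strategy $\implies\mathcal I$ is precipitous'' I argue contrapositively. Given $A$ and $\langle\dot f_n\rangle$ as above, fix for each $n$ a maximal antichain $\mathcal A_n$ below $A$ so that each $B\in\mathcal A_n$ forces $\dot f_n=\check g$ for some $g=g_n^B\colon\kappa\to\mathrm{Ord}$ in $V$. I then build a strategy $\sigma$ for ${\rm I}$ with $\sigma(\langle\rangle)$ a member of $\mathcal A_0$, maintaining, after $2m$ moves, that the last condition $E_{2m}$ is $\subseteq A$, forces $\dot f_k=\check g_k$ for all $k\le m$ (fixed ground-model functions $g_0,\dots,g_m$), and (if $m\ge 1$) satisfies $E_{2m}\subseteq\{\alpha:g_m(\alpha)<g_{m-1}(\alpha)\}$. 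To continue: when ${\rm II}$ plays $E_{2m+1}\subseteq E_{2m}$, pick $B\in\mathcal A_{m+1}$ compatible with $E_{2m+1}$, let $g_{m+1}:=g_{m+1}^B$; since $A\Vdash[\dot f_{m+1}]_{\dot G}<[\dot f_m]_{\dot G}$, the condition $E_{2m+1}\cap B$ forces $\{g_{m+1}<g_m\}\in\dot G$, so $(E_{2m+1}\cap B)\cap\{g_{m+1}<g_m\}$ is still $\mathcal I$-positive and ${\rm I}$ plays it as $E_{2m+2}$. In any run consistent with $\sigma$, a point $\alpha\in\bigcap_nE_n$ would satisfy $g_{m+1}(\alpha)<g_m(\alpha)$ for all $m$, an impossible infinite descent of ordinals; so $\bigcap_nE_n=\emptyset$ and $\sigma$ wins.

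For ``$\mathcal I$ is precipitous $\implies{\rm I}$ has no winning strategy'', again contrapositively: let $\sigma$ be a winning strategy for ${\rm I}$. For a position $p$ consistent with $\sigma$ that ends with a move of ${\rm I}$ and for $\alpha$ in the last condition of $p$, the tree of $\sigma$-consistent extensions of $p$ all of whose conditions still contain $\alpha$ has no cofinal branch (such a branch would be a $\sigma$-run with $\alpha\in\bigcap_nE_n$), so it is well founded; let $\rho(p,\alpha)$ be the rank of $p$ in it --- a function lying in $V$. Now force with $\mathcal I^+$ below $\sigma(\langle\rangle)$, obtaining a generic $G$. Working in $V[G]$, build a run $\langle E_n:n<\omega\rangle$ consistent with $\sigma$ with every $E_n\in G$: at an even stage the set of conditions of the form $\sigma(\langle E_0,\dots,E_{2n},F\rangle)$, for $F$ a legal move of ${\rm II}$ below $E_{2n}$, is dense below $E_{2n}$, so $G$ meets it and ${\rm II}$'s move $F$ is read back off. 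Each initial segment $p_n=\langle E_0,\dots,E_{2n}\rangle$ is a finite sequence of ground-model sets, hence lies in $V$, so $h_n:=\rho(p_n,\cdot)$ is a ground-model function. If $\alpha\in E_{2n+2}$ then $p_{n+1}$ properly extends $p_n$ inside the well-founded $\alpha$-tree, so $h_{n+1}(\alpha)<h_n(\alpha)$; thus $E_{2n+2}\subseteq\{\alpha:h_{n+1}(\alpha)<h_n(\alpha)\}\in G$, so $\langle[h_n]_G:n<\omega\rangle$ is strictly decreasing in ${\rm Ult}(V,G)$. Hence the ultrapower is ill founded for every generic containing $\sigma(\langle\rangle)$, i.e.\ $\mathcal I$ is not precipitous.

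The forcing bookkeeping aside, the delicate points are the two ``descent into $V$'' steps. In the first direction one needs the ground-model functions $g_k$, which are available because $\dot f_k$, being forced into $V$, is decided along a maximal antichain; in the second, one needs the generically produced initial segments $p_n$ --- finite sequences of ground-model conditions --- to be genuine members of $V$, so that $\rho(p_n,\cdot)$ is a legitimate element of ${\rm Ult}(V,G)$. I expect the second direction to be the harder one: besides making the generic construction of the run precise (the density argument that lets ${\rm II}$'s moves be recovered), one must check that the rank function $\rho$, defined in $V$ from the winning strategy, still governs the descent along the $V[G]$-run --- which it does because the relevant $\alpha$-trees are $V$-trees and comparisons of the ranks of their nodes are absolute.
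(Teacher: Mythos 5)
The paper states this as a Fact attributed to Galvin--Jech--Magidor and gives no proof (it is closed immediately with a tombstone, deferring to \cite{GJM}), so there is no in-text argument to compare against. Your proof is the standard Galvin--Jech--Magidor argument and, as far as I can tell, it is correct: deciding the $\dot f_n$ along maximal antichains and steering into the diagonal set $\{g_{m+1}<g_m\}$ yields the winning strategy when $\mathcal I$ is not precipitous, while in the converse direction the rank of the tree of $\sigma$-consistent positions keeping $\alpha$ alive furnishes ground-model functions whose classes descend in the generic ultrapower. The one small imprecision worth tightening is ``II's move $F$ is read back off'': the element $E_{2n+2}\in G$ need not determine $F$ uniquely, but one may simply fix some $F$ with $E_{2n+2}=\sigma(\langle E_0,\dots,E_{2n},F\rangle)$; then $F\in G$ follows from $E_{2n+2}\subseteq F$ and upward closure of $G$, which is all that the subsequent descent argument uses.
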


We will also need the notion of \textit{canonical functions} relative to the nonstationary ideal, ${\rm NS}_{\omega_1}$.
We recall the relevant definitions from \cite{GaHa}.
Given $f,g: \omega_1 \rightarrow {\rm ORD}$ we let $f<_{{\rm NS}_{\omega_1}}g$ if $\{ \alpha : f(\alpha)<g(\alpha)\}$
contains a club. Since ${\rm NS}_{\omega_1}$ is countably complete, the quasi order $<_{{\rm NS}_{\omega_1}}$ is well founded.
For a function $f\in {\rm ORD}^{\omega_1}$, let $||f||$ denote the rank of $f$ in this ordering.
It is also known as the \textit{Galvin-Hajnal norm} of $f$.
By induction on $\alpha$, the $\alpha$-th \textit{canonical function} $f_\alpha$ is defined (if it exists)
as the $<_{{\rm NS}_{\omega_1}}$-least ordinal valued function greater than the $f_\xi$, for all $\xi <\alpha$.
Clearly, if the $\alpha$-th canonical function exists then it is unique up to the equivalence
$=_{{\rm NS}_{\omega_1}}$. One can show in {\rm ZFC} that the $\alpha$-th canonical function $f_\alpha$
exists, for all $\alpha <\omega_2$. One way to define $f_\alpha$ is to fix an increasing
continuous sequence $(x_\xi)_{\xi < \omega_1}$ of countable sets with $\bigcup_{\xi< \omega_1}x_\xi = \alpha$
and let $f_\alpha(\xi)= {\rm o.t.}(x_\xi)$, for all $\xi$.
The point is that if we wish to witness the non well foundedness of the generic ultrapower
we have to work with functions that are above the $\omega_2$ first canonical functions.
Our forcing is designed to introduce $\theta$ many such functions, where $\theta$ is the length of
the iteration. From these functions we  define a winning strategy for I in $\mathcal G_{\rm NS_{\omega_1}}$
and implies that ${\rm NS}_{\omega_1}$ is not precipitous in the final model.

\section{The Side Condition Poset}

We start by reviewing Neeman's side condition poset from \cite{Neeman}.
We fix a transitive model $\mathcal K=(K,\in,\ldots)$ of a sufficient fragment of ZFC, possibly
with some additional functions or predicates.
Let $\calS$ denote a collection of countable elementary submodels of $\mathcal K$
and let $\calT$ be a collection of transitive $W\prec \mathcal K$ such that $W\in K$.
We say that the pair $(\calS,\calT)$ is {\em appropriate} if $M\cap W\in \calS \cap W$, for every
$M\in \calS$ and $W\in \calT$. We are primarily interested in the case when $K$ is equal to $V_\theta$,
for some inaccessible cardinal $\theta$, let  $\calS$  consists of all countable submodels of
$V_\theta$ and $\calT$ consists of all the $V_\alpha$ such that $V_\alpha \prec V_\theta$
and $\alpha$ has uncountable cofinality. We present the more general version since it will be needed
in the analysis of the factor posets of the side condition forcing.

Let us fix a transitive model $\mathcal K$ of a sufficient fragment of set theory
and an appropriate pair $(\calS,\calT)$.
The \textit{side condition poset} $\bbM_{\calS , \calT}$ consists of finite $\in$-chains
$\calM = \{M_0 , \ldots , M_{n-1} \}$ of elements of $\calS \cup \calT$, closed under intersection.
So for each $k < n$, $M_k \in M_{k+1}$, and if $M,N \in \calM$, then also $M \cap N \in \calM$.
We will refer to elements of $\calM \cap \calS$ as \textit{small} or \textit{countable} \textit{nodes}
of $\calM$, and to the elements of $\calM \cap \calT$ as \textit{transitive} nodes of $\calM$.
We will write $\pi_{\calS}(\calM)$ for $\calM \cap \calS$ and $\pi_{\calT}(\calM)$ for $\calM \cap \calT$.
Notice that $\calM$ is totally ordered by the ranks of its nodes, so it makes sense to say,
for example, that $M$ is above or below $N$, when $M$ and $N$ are nodes of $\calM$.
The order on $\bbM_{\calS , \calT}$ is reverse inclusion, i.e. $\calM \leq \calN$ iff $\calN\subseteq \calM$.

The \textit{decorated} side condition poset $\bbM^{\rm dec}_{\calS , \calT}$ consists of pairs
$p$ of the form $(\calM_p , d_p)$, where $\calM_p \in \bbM_{\calS , \calT}$ and
$d_p:\calM_p \to K$ is such that $d_p(M)$ is a finite set which belongs
to the successor of $M$ in $\calM_p$, if this successor exists, and if $M$ is the largest node
of $\calM_p$ then $d_p(M)\in K$. Sometimes our $d_p$ will be only a partial function on $\M_p$.
In this case, we identify it with the total function which assigns the empty set to all nodes
on which $d_p$ is not defined.  The order on $\bbM^{\rm dec}_{\calS , \calT}$ is given by letting $q \leq p$ iff
$\calM_p \subseteq \calM_q$ and $d_p(M) \subseteq d_q(M)$, for every $M \in \calM_p$.
Suppose $p \in \bbM^{\rm dec}_{\calS , \calT}$ and $Q \in \calM_p$. Let $p \vert Q$ be
the condition $(\calM_p \cap Q , d_p \restr Q)$. One can check that
$p \vert Q$ is indeed a condition in $\bbM^{\rm dec}_{\calS , \calT}$.

We first observe the following simple fact.

\begin{lem}\label{M_p} Suppose $p$ is a condition in $\bbM^{\rm dec}_{\calS , \calT}$
and $M$ is a model in $\calS \cup \calT$ such that $p\in M$. Then there is a  condition $p^M$
extending $p$ such that $M$ is the top model of $\M_{p^M}$.
\end{lem}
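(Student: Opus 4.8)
The plan is to construct $p^M$ by putting $M$ on top of $\M_p$ and closing off under intersection. Since $p\in M$ and $M$ is elementary in $\mathcal K$, every node of $\M_p$, and every value $d_p(N)$ for $N\in\M_p$, is an element of $M$; in particular each node of $\M_p$ has rank below $\rank(M)$, so $M$ lies strictly above every node of $\M_p$ and $\M_p\cup\{M\}$ is a finite $\in$-chain with top node $M$. Let $\M_{p^M}$ be the closure of $\M_p\cup\{M\}$ under intersection, and identify the nodes this adds: if $N$ is a countable node of $\M_p$ then $N\in M$ forces $N\subseteq M$ (a countable member of an elementary submodel is a subset of it), so $N\cap M=N$; if $N$ is transitive and $M$ is transitive, then again $N\in M$ gives $N\subseteq M$; so the only genuinely new nodes are the \emph{residues} $W\cap M$ with $W$ a transitive node of $\M_p$ and $M$ countable (note $\rank(W\cap M)<\rank(W)$, since $W$ has uncountable cofinality while $M$ is countable). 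By appropriateness of $(\calS,\calT)$ each residue lies in $\calS$ (in fact in $\calS\cap W$), and the identity $(W\cap M)\cap N=(W\cap N)\cap M$ together with $W\cap N\in\M_p$ shows that no further intersections appear; thus $\M_{p^M}$ is finite with all nodes in $\calS\cup\calT$.

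The crux is to verify that $\M_{p^M}$, which is totally ordered by the ranks of its nodes, is again an $\in$-chain, i.e. that consecutive nodes are $\in$-related. The main point is that every node $N$ of $\M_p$ with $\rank(N)<\rank(W)$ already lies in $W\cap M$ (it is in $M$ and in $W$), so, using also that distinct nodes of a side condition have distinct ranks, one checks: the immediate predecessor of a residue $W\cap M$ in $\M_{p^M}$ must be a node of $\M_p$ of rank $<\rank(W\cap M)$, hence a member of $W\cap M$; and the immediate successor of $W\cap M$ must be a transitive node of $\M_p$ of rank $>\rank(W\cap M)$, hence a set containing $W\cap M$. The remaining consecutive pairs come from $\M_p$ itself or involve the top node $M$, whose immediate predecessor is $\max\M_p\in M$.

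It then remains to set $d_{p^M}\restr\M_p=d_p$ and $d_{p^M}\equiv\emptyset$ on the residues and on $M$, and to check the successor requirement of the decoration at a node $P\in\M_p$ whose successor in $\M_{p^M}$ has become a residue $W\cap M$. A rank count (again using that nodes of $\M_p$ of rank $<\rank(W)$ lie in $W\cap M$) shows that the successor of $P$ in $\M_p$ must have had rank $\rank(W)$, so $d_p(P)\in W$ because $p$ is a condition, while $d_p(P)\in M$ because $d_p\in M$ and $P\in M$; hence $d_p(P)\in W\cap M$, as required. Therefore $p^M:=(\M_{p^M},d_{p^M})$ is a condition, $p^M\le p$ (only nodes were added and no decoration value was shrunk), and $M$ is the top model of $\M_{p^M}$.

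I expect the main obstacle to be the $\in$-chain verification in the second paragraph: the bookkeeping around the exact location of each residue $W\cap M$ relative to the nodes of $\M_p$ is precisely where the present setup departs from, and has to rework, the corresponding lemma of \cite{Neeman}.
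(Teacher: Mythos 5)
Your proof is correct and follows essentially the same construction as the paper: close $\M_p\cup\{M\}$ under intersection, observe that the only new nodes are the residues $W\cap M$ when $M$ is countable, and extend the decoration by $\emptyset$ on the new nodes. The paper dismisses the verification as "straightforward to check," whereas you have carried out the rank/appropriateness bookkeeping in full, so there is no substantive difference in approach.
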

\begin{proof} We let $\M_{p^M}$ be the closure of $\M_p \cup \{ M\}$ under intersection.
Note that if $M\in \calT$ then all the nodes of $\M_p$ are subsets of $M$ and hence
$\M_{p^M}$ is simply $\M_p \cup \{ M\}$. On the other hand if $M$ is countable we need
to add nodes of the form $M\cap W$, where $W$ is a transitive mode in $\M_p$.
We define $d_{qp^M}$ by letting $d_{p^M}(N)=d_p(N)$, if $N\in \M_p$, and $d_{p^M}(N)=\emptyset$,
if $N$ is one of the new nodes. It is straightforward to check that $p^M$ is as desired.
\end{proof}

 The main technical results about the (decorated) side condition poset are Corollaries 2.31 and 2.32
 together with Claim 2.38 in \cite{Neeman}. We combine them here as one lemma.

\begin{lem}[\cite{Neeman}]\label{scl} Let $p$ be a condition in $\bbM^{\rm dec}_{\calS , \calT}$, and let $Q$
be a node in $\calM_p$. Suppose that $q$ is a condition in $\bbM^{\rm dec}_{\calS , \calT}$
which belongs to $Q$ and strengthens the condition $p \vert Q$. Then there is
$r \in \bbM^{\rm dec}_{\calS , \calT}$ with $r \leq p,q$ such that:
\begin{enumerate}
\item $\calM_r$ is the closure under intersection of $\calM_p \cup \calM_q$,
\item $\calM_r \cap Q = \calM_q$,
\item The small nodes of $\calM_r$ outside $Q$ are of the form $M$ or
$M \cap W$, where $M$ is a small node of $\calM_p$ and $W$ is a transitive node of $\calM_q$.
\end{enumerate}
\qed
\end{lem}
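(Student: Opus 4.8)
The plan is to prove Lemma~\ref{scl} by reducing it to the corresponding statements in \cite{Neeman}, namely Corollaries~2.31 and~2.32 together with Claim~2.38, while taking care of the bookkeeping introduced by the decoration function. First I would recall the structure of the underlying (undecorated) side condition poset $\bbM_{\calS,\calT}$. Given $p$ and $q$ as in the hypothesis, let $\calM := \calM_q$ and $\calN := \calM_p$; by assumption $q \in Q$, so every node of $\calM$ is a member of $Q$, and $q$ strengthens $p\vert Q$ means $\calM_p \cap Q \subseteq \calM_q$ (together with the decoration inclusion, which we set aside for the moment). Neeman's amalgamation result applied to this data produces a condition in $\bbM_{\calS,\calT}$ whose underlying model set $\calM_r$ is the closure under intersection of $\calM_p \cup \calM_q$, and which refines both $\calM_p$ and $\calM_q$; parts (2) and (3) are exactly the structural facts isolated in his Claim~2.38 about which nodes of this closure lie inside $Q$ and which lie outside, the outside small nodes being of the form $M$ or $M\cap W$ with $M$ a small node of $\calM_p$ and $W$ a transitive node of $\calM_q$.

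The second step is to equip $\calM_r$ with a decoration $d_r$ witnessing $r \leq p,q$. I would define $d_r$ by setting $d_r(N) := d_p(N) \cup d_q(N)$ for every $N \in \calM_r$, with the convention that $d_p(N)$ (resp.\ $d_q(N)$) is read as $\emptyset$ when $N \notin \calM_p$ (resp.\ $N\notin\calM_q$). One then has to check that this is a legitimate decoration, i.e.\ that for each $N$ the set $d_r(N)$ is finite and belongs to the successor of $N$ in $\calM_r$ (or to $K$ if $N$ is the top node). Finiteness is clear. For the membership condition one uses that passing to $\calM_r$ only inserts new nodes \emph{between} old ones, so the successor of $N$ in $\calM_r$ is either the old successor of $N$ in $\calM_p$ or $\calM_q$, or one of the freshly added intersection nodes $M\cap W$; in all cases elementarity of the relevant model and the fact that $d_p(N)$ (resp.\ $d_q(N)$) already belonged to the old successor give that $d_r(N)$ belongs to the new successor. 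The inequalities $d_p(N)\subseteq d_r(N)$ for $N\in\calM_p$ and $d_q(N)\subseteq d_r(N)$ for $N\in\calM_q$ are immediate from the definition, so $r\leq p$ and $r\leq q$ as required. This handles the ``decorated'' part of the claim while the three enumerated properties are inherited verbatim from the undecorated amalgam.

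The main obstacle is the verification in the previous paragraph that $d_r$ really is a decoration --- specifically, checking the successor-membership requirement at the newly created intersection nodes and at nodes whose successor changed when we closed $\calM_p\cup\calM_q$ under intersection. This is the place where one has to invoke the precise description of $\calM_r$ from Neeman's Claim~2.38: every new node is of the controlled form $M\cap W$, and the immediate successor of any node in $\calM_r$ can be pinned down, so that the value $d_p(N)\cup d_q(N)$, being a finite set lying in two models each of which is contained in the relevant successor, lands where it must. Since all of this is routine once one has the structure lemma from \cite{Neeman} in hand, I would present the argument briefly and refer the reader to the cited results for the combinatorics of the model amalgamation itself.
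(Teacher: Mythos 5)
The paper itself offers no proof of Lemma~\ref{scl}: it is stated with attribution to \cite{Neeman} and the body is just a \qed{} box, with the surrounding text explaining that the statement packages Neeman's Corollaries~2.31 and~2.32 together with his Claim~2.38. So the ``route'' in the paper is pure citation, and you have instead set out to re-derive the decorated statement from the undecorated amalgamation. That is a legitimate thing to try, but your verification contains a real gap at the crucial step.

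The issue is your claim that, after defining $d_r(N) := d_p(N) \cup d_q(N)$, one sees $d_r(N)$ lands in the successor of $N$ in $\calM_r$ because ``elementarity of the relevant model and the fact that $d_p(N)$ already belonged to the old successor'' suffice. This does not follow. When $\calM_p\cup\calM_q$ is closed under intersection, nodes $M\cap W$ (with $M$ a small node of $\calM_p$ outside $Q$ and $W$ a transitive node of $\calM_q$) are created, and such a node can become the new immediate successor of some $N$ whose old successor was a strictly larger model. Knowing $d_p(N)$ or $d_q(N)$ belonged to the \emph{old} successor gives no information about membership in the strictly smaller set $M\cap W$: elementarity of $M$ or $W$ does not place an arbitrary finite set from the old successor inside $M\cap W$. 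To make the argument work one must carry out the positional analysis of exactly where the new nodes sit. For instance, a rank computation shows no new node can be inserted above $Q$, so decorations on nodes of $\calM_p$ above $Q$ are untouched; for the top node $N$ of $\calM_q$ one needs the observation that the node of $\calM_r$ immediately below $Q$ lies in $Q$, hence in $\calM_q$, hence is $\leq N$, so nothing is inserted between $N$ and $Q$ and $d_q(N)\in Q$ does the job; and one must separately rule out, or handle, insertions between consecutive nodes of $\calM_q$ coming from intersections $M\cap W$ that land outside $Q$. None of this is automatic, and it is precisely this bookkeeping that constitutes Neeman's Claim~2.38.

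So: if you intend to match the paper, simply cite Neeman as the paper does, noting that Claim~2.38 already covers the decorated amalgam. If you intend to give a self-contained argument, you need to replace the appeal to ``elementarity'' with the rank/position analysis sketched above, and in particular verify that whenever the immediate successor of a decorated node shrinks in $\calM_r$, the relevant decoration still lies in the new successor. As written, that verification is missing.
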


We now discuss our modification of Neeman's posets. Let $\theta =K\cap \ORD$. We will choose
$\calS$ and $\calT$ to be stationary families of subsets of $K$. The stationary of $\calS$
guarantees that $\omega_1$ is preserved in the generic extension and the stationarity of $\calT$
guarantees that $\theta$ is preserved. All the cardinals in between will be collapsed to $\omega_1$,
thus $\theta$ becomes $\omega_2$ in the final model.
We plan to simultaneously add $\theta$-many partial functions from $\omega_1$ to $\omega_1$.
Each of the partial functions will be defined on a club in $\omega_1$ and will be forced
to dominate the $\theta$ first canonical functions in the generic extension.
The decorated version of the pure side condition forcing gives us
a natural way to represent the canonical function $f_\alpha$, for cofinally many $\alpha <\theta$.

Before we introduce our version of the side condition poset let us make a definition.

\begin{defn}\label{g} Let $\calM$ be a member of $\bbM_{\calS,\calT}$.
We define the partial function $h_{\calM}$ from $\theta \times \omega_1$ as follows.
The domain of $h_{\calM}$ is the set of pairs $(\alpha,\xi)$ such that
there is a countable node $M\in \calM$ with $M\cap \omega_1=\xi$ and $\alpha \in M$.
If $(\alpha,\xi)\in \dom(h_{\calM})$ we let
$$
h_{\calM}(\alpha,\xi) = \max \{ \otp(M\cap \theta) : M\in \pi_{\calS}(\M), \alpha \in M \mbox{ and } M\cap \omega_1 =\xi\}.
$$
\end{defn}

If $p\in \bbM^*_{\calS,\calT}$ we let $h_p$ denote $h_{{\calM}_p}$. We are now ready
to define our modified side condition poset $\bbM^*_{\calS , \calT}$.

\begin{defn}\label{refined side cond defn} The poset $\bbM^*_{\calS , \calT}$
consists of all conditions $p \in \bbM^{\rm dec}_{\calS,\calT}$
such that
\medskip
\begin{enumerate}
\item[($\ast$)]
for every  $M,N\in \pi_{\calS}(\calM_p)$,
if $N \cap \omega_1 \in M$, then $\otp{(N \cap \theta)} \in M$.
\end{enumerate}
\medskip
\noindent The ordering is defined by letting $q \leq p$ iff
$(\calM_q , d_q) \leq_{\bbM^{\rm dec}_{\calS , \calT}} (\M_p , d_p)$
and $h_p\subseteq h_q$.
\end{defn}

Let us first observe that we have an analog of Lemma \ref{M_p}.

\begin{lem}\label{M_p*} Suppose $p$ belongs to $\bbM^*_{\calS , \calT}$
and $M$ is a model in $\calS \cup \calT$ such that $p\in M$. Then there is a condition $p^M$
extending $p$ such that $M$ is the top node of $\M_{p^M}$.
\qed
\end{lem}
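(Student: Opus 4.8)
The plan is to take the condition $p^M \in \bbM^{\rm dec}_{\calS,\calT}$ supplied by Lemma~\ref{M_p} and to check the two extra demands built into $\bbM^*_{\calS,\calT}$ and its order (Definition~\ref{refined side cond defn}): that $p^M$ still satisfies $(\ast)$, and that $h_p\subseteq h_{p^M}$, so that $p^M\le p$ holds in $\bbM^*_{\calS,\calT}$ and not merely in $\bbM^{\rm dec}_{\calS,\calT}$. Recall from the proof of Lemma~\ref{M_p} that $\M_{p^M}$ is the closure of $\M_p\cup\{M\}$ under intersection, with $d_{p^M}$ extending $d_p$ by $\emptyset$ on the new nodes; if $M\in\calT$ no new nodes appear, while if $M\in\calS$ the new nodes are $M$ itself together with the sets $M\cap W$ for $W$ a transitive node of $\M_p$, and these are small nodes by appropriateness of $(\calS,\calT)$.

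First I would record the elementary consequences of $p\in M$. Since $p\in M$ and $\M_p$ is finite, $\M_p\subseteq M$, so every node of $\M_p$ is an element of $M$; in particular each small node $N$ of $\M_p$ is a countable element of $M$, whence $N\subseteq M$, $N\cap\omega_1<M\cap\omega_1$, and $\otp(N\cap\theta)\in M$ (here $N\cap\theta=N\cap\ORD$ is definable from $N$, so $N\cap\theta\in M$, and then $\otp(N\cap\theta)\in M$ since the order type of a set of ordinals is computed correctly in $M$). Two further remarks: $\otp(N\cap\theta)$ is a countable ordinal, hence lies in every transitive node $W$ of $\M_p$ since $\omega_1\subseteq W$; and every new small node ($M$ itself or some $M\cap W$) meets $\omega_1$ in exactly $M\cap\omega_1$, which is strictly above $N\cap\omega_1$ for every small node $N$ of $\M_p$, and which belongs to no small node of $\M_{p^M}$ (a small node $M'$ contains exactly the countable ordinals below $M'\cap\omega_1$, and $M'\cap\omega_1\le M\cap\omega_1$).

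With these facts the two verifications are short. For $(\ast)$: given small nodes $M',N'$ of $\M_{p^M}$ with $N'\cap\omega_1\in M'$, I want $\otp(N'\cap\theta)\in M'$. If both $M'$ and $N'$ lie in $\M_p$ this is $(\ast)$ for $p$. If $N'$ is new then $N'\cap\omega_1=M\cap\omega_1$, which lies in no small node of $\M_{p^M}$, so the hypothesis is vacuous. The only genuine case is $N'\in\pi_{\calS}(\M_p)$ with $M'$ new: if $M'=M$ then $\otp(N'\cap\theta)\in M=M'$ by the previous paragraph, and if $M'=M\cap W$ then $\otp(N'\cap\theta)\in M$ and, being countable, also $\otp(N'\cap\theta)\in W$, so $\otp(N'\cap\theta)\in M\cap W=M'$. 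For the order: clearly $\dom(h_p)\subseteq\dom(h_{p^M})$, and if $(\alpha,\xi)\in\dom(h_p)$ then $\xi<M\cap\omega_1$, so no new node contributes to the maximum defining $h_{p^M}(\alpha,\xi)$; hence $h_{p^M}(\alpha,\xi)=h_p(\alpha,\xi)$, so $h_p\subseteq h_{p^M}$. Thus $p^M\in\bbM^*_{\calS,\calT}$ and $p^M\le p$, and $M$ is the top node of $\M_{p^M}$ by construction.

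I expect the only real point of care to be the $(\ast)$-check around the intersection nodes $M\cap W$ introduced when $M$ is countable: one has to see both that $N'\cap\omega_1\in M'$ can occur only with $N'$ an old node (because the new small nodes all carry the single large value $M\cap\omega_1$) and that the order types one must place inside $M\cap W$ are countable and hence already inside $W$. Everything else is bookkeeping inherited from Lemma~\ref{M_p}.
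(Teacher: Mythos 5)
Your proposal is correct and takes the approach the paper intends: the paper marks Lemma~\ref{M_p*} with \qed as an ``analog of Lemma~\ref{M_p}'' and leaves the verification to the reader, and your argument is precisely that routine check---take $p^M$ from Lemma~\ref{M_p}, observe that the new small nodes $M$ and $M\cap W$ all have critical point $M\cap\omega_1$, which no small node of $\M_{p^M}$ contains, and conclude both that $(\ast)$ survives and that $h_{p^M}$ agrees with $h_p$ on $\dom(h_p)$.
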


We now establish some elementary properties of conditions in $\bbM^*_{\calS,\calT}$.

\begin{lem}\label{restriction} Suppose $p$ is a condition in $\bbM^*_{\calS,\calT}$
and $M\in \calM_p$. Then $h_p\restriction M \in M$.
\end{lem}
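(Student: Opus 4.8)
The plan is to compute $h_p\restriction M$ explicitly and to verify that all the data needed to define it lies inside $M$, so that $M$, being elementary in $\mathcal K$, can reconstruct the function. First I would recall that $h_p\restriction M$ is by definition the restriction of $h_p = h_{\calM_p}$ to those pairs $(\alpha,\xi)$ with $\alpha\in M$ (note $\xi < M\cap\omega_1$ follows automatically whenever such a pair is in the domain and $\alpha\in M$, since then some countable node $N\in\calM_p$ has $N\cap\omega_1=\xi$ and $\alpha\in N$; if additionally $\alpha\in M$ we will argue $\xi\in M$). The key case split is on whether $M$ is transitive or countable. If $M\in\calT$, then every node of $\calM_p$ below $M$ is already an element of $M$, and the nodes above $M$ are irrelevant for pairs $(\alpha,\xi)$ with $\alpha\in M$; so $h_p\restriction M$ is definable inside $M$ from $\calM_p\cap M\in M$ by the very formula of Definition \ref{g}, and we are done by elementarity of $M$.

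The substantive case is $M\in\calS$ countable. Here I would use condition $(\ast)$ of Definition \ref{refined side cond defn}. Fix a pair $(\alpha,\xi)\in\dom(h_p)$ with $\alpha\in M$. Pick any countable node $N\in\calM_p$ witnessing membership in the domain, i.e.\ $N\cap\omega_1 = \xi$ and $\alpha\in N$. I claim $\xi\in M$ and $h_p(\alpha,\xi)\in M$. Indeed, consider any countable node $N'$ of $\calM_p$ with $\alpha\in N'$ and $N'\cap\omega_1=\xi$. Since $\alpha\in M$ and $\alpha\in N'$, the nodes $M$ and $N'$ are $\in$-comparable (or $M\cap N'$ is a node), and one analyses the two subcases: if $N'\cap\omega_1 \le M\cap\omega_1$, then either $N'\in M$ outright, or at least $N'\cap\omega_1=\xi\in M$ and, applying $(\ast)$ with the roles $N=N'$, we get $\otp(N'\cap\theta)\in M$; if instead $M\cap\omega_1 < N'\cap\omega_1$, then $M\in N'$, but also $M\cap N'$ is a node of $\calM_p$, and since $\alpha\in M\cap N'$ with $(M\cap N')\cap\omega_1 = M\cap\omega_1$, this node already witnesses the domain membership with a smaller $\omega_1$-value, contradicting $N'\cap\omega_1 = \xi$ unless $\xi = M\cap\omega_1$, in which case one argues directly that the relevant order type is computed inside $M$. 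The upshot is that $\xi\in M$ and that every order type $\otp(N'\cap\theta)$ entering the maximum defining $h_p(\alpha,\xi)$ is an element of $M$; since the maximum is over a finite set of such ordinals, $h_p(\alpha,\xi)\in M$ too.

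Finally, to see that the whole function $h_p\restriction M$ (not merely each of its values) belongs to $M$, I would note that its domain is $\{(\alpha,\xi) : \alpha\in M\cap\theta,\ \xi\in M\cap\omega_1,\ (\alpha,\xi)\in\dom(h_p)\}$, and both $M\cap\theta$ and $M\cap\omega_1$ are determined by $M$; what remains is to show the predicate ``$(\alpha,\xi)\in\dom(h_p)$ and $h_p(\alpha,\xi)=\beta$'' is, for parameters in $M$, equivalent to a statement $M$ can verify. The argument above shows exactly this: membership in the domain and the value are both witnessed by nodes whose relevant traces lie in $M$, and by elementarity $M$ sees a witness iff one exists. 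Hence $h_p\restriction M$ is definable over $M$ from parameters in $M$ and so $h_p\restriction M\in M$. The main obstacle is the bookkeeping in the countable case: making precise, via $(\ast)$ and the intersection-closure of $\calM_p$, that no node outside $M$ can contribute a value or domain-pair that $M$ cannot already compute; once $(\ast)$ is invoked correctly this is routine.
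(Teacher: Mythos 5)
The transitive case and the invocation of $(\ast)$ to get $\otp(N\cap\theta)\in M$ are the right ideas, but the countable case as you have written it has two real problems.

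First, the parenthetical claim that ``$\xi<M\cap\omega_1$ follows automatically whenever $(\alpha,\xi)\in\dom(h_p)$ and $\alpha\in M$'' is false: if $N'$ is a countable node of $\calM_p$ \emph{above} $M$ (so $M\cap\omega_1<N'\cap\omega_1$) and $\alpha\in M\cap N'$, then $(\alpha,N'\cap\omega_1)\in\dom(h_p)$ with $\alpha\in M$ but $N'\cap\omega_1\notin M$. Your attempt to derive a contradiction in that case does not work either: the fact that $(M\cap N')\cap\omega_1=M\cap\omega_1$ only shows that the \emph{additional} pair $(\alpha,M\cap\omega_1)$ is in the domain; it does not conflict with $(\alpha,\xi)$ also being there. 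None of this is fatal, because the correct reading of $h_p\restriction M$ is restriction to $\dom(h_p)\cap M$, i.e.\ to pairs with both coordinates in $M$, and you do state this correctly in your last paragraph. But the detour signals a misreading of what is to be proved.

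Second, and more seriously, the real content of the lemma is hidden in your phrase ``witnessed by nodes whose relevant traces lie in $M$.'' What you actually need, and do not prove, is the following claim (the paper isolates it as Claim~\ref{small-restriction}): if $N$ is a countable node of $\calM_p$ with $N\cap\omega_1\in M$, then $N\cap M\in M$. This is \emph{not} automatic. By closure under intersection $N\cap M\in\calM_p$, and it lies below $M$; but $P\in\calM_p$ lying below $M$ does not by itself give $P\in M$ when a transitive node sits between $P$ and $M$. The argument has to look at the least transitive node $W$ above $N\cap M$, use closure under intersection again to produce $M\cap W\in\calM_p$ strictly above $N\cap M$ with no transitive node in between, conclude $N\cap M\in M\cap W\subseteq M$. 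Without this step, ``$N'\in M$ outright'' in your case split is just an assertion. Once you have the claim, the rest is a \emph{finite-set} argument, not an elementarity argument: the domain of $h_p$ is a finite union of fibres $(N\cap\theta)\times\{\xi\}$ over end nodes $N$, on each of which $h_p$ is constant equal to $\otp(N\cap\theta)$; for $\xi\in M$ the claim gives $N\cap M\in M$ and $(\ast)$ gives $\otp(N\cap\theta)\in M$, so each piece of $h_p\restriction M$ is a set in $M$, and there are finitely many pieces. Appealing to ``$M$ sees a witness iff one exists'' is the wrong mechanism, since $\calM_p\notin M$ and $M$ cannot quantify over it; it is the finiteness of $\calM_p$ together with the claim and $(\ast)$ that put the function inside $M$.
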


\begin{proof} If $M$ is a transitive node then this is immediate. Suppose $M$ is countable.
We will use the following.

\begin{claim}\label{small-restriction} Suppose $N$ is a countable node in $\calM_p$ and $N\cap \omega_1\in M$.
Then $N\cap M \in M$.
\end{claim}
\begin{proof} Since $\calM_p$ is closed under intersection we have that $N\cap M\in \calM_p$.
Moreover, since $N\cap \omega_1 \in M$ we have that $N\cap M$ is below $M$.
If there is no transitive node between $N\cap M$ and $M$ then $N\cap M\in M$.
Otherwise, let $W$ be the least transitive node above $N\cap M$.
By closure under intersection again, $M\cap W \in \calM_p$. Moreover, $N\cap M \subseteq W\cap M$
and the inclusion is proper. Therefore, $M\cap W$ is a countable node above $N\cap M$
and there is no transitive node between them. Therefore, $N\cap M \in M\cap W$ and
so $N\cap M \in M$.
\end{proof}

Let us say that a node $N$ of $\calM_p$ is an \textit{end node} of $\calM_p$
if there is no node in $\calM_p$ which is an end extension of $N$.
The domain of the function $h_p$ is the union of all the sets of the form $(N \cap \theta) \times \{ \xi\}$,
where $N$ is a countable end node of $\calM_p$ and $\xi =N\cap \omega_1$.
Moreover, on $(N \cap \theta)\times \{ \xi\}$ the function $h_p$ is constant and equal to $\otp (N\cap \theta)$.
Now, if $\xi \in M$ then by Claim \ref{small-restriction} $N\cap M \in M$.
Moreover, since $p\in \bbM^*_{\calS,\calT}$ we have that $\otp(N \cap \theta)\in M$.
It follows that $h_p \restriction M\in M$.
\end{proof}

We wish to have an analog of Lemma \ref{scl}. If $p\in \bbM^*_{\calS,\calT}$ and $Q\in \calM_p$
we can let $p\vert Q$ be $(\calM_p \cap Q , d_p \restr Q)$. It is easy to check that
$p \vert Q$ is a condition. However, we do not know that $p$ extends $p\vert Q$ since
$h_p$ may not be an extension of $h_{p\vert Q}$. We must refine the notion
of restriction in order to arrange this.
In order to do this, let us enrich our initial structure $\mathcal K$ by adding predicates
for $\calS$ and $\calT$. Let $\mathcal K^*$ denote the structure $(K,\in,\calS,\calT,\ldots)$.
Note that our poset  $\bbM^*_{\calS,\calT}$ is definable in $\mathcal K^*$.
Let $\calS^*$ be the collection of all $M\in \calS$ that are elementary in $\mathcal K^*$
and let $\calT^*$ be the set of all $W\in \calT$ that are elementary in $\mathcal K^*$.
Note that $\calS^*$ (respectively $\calT^*$) is a relative club in $\calS$ (respectively $\calT$),
hence if $\calS$ (respectively $\calT$) is stationary then so is $\calS^*$ (respectively $\calT^*$).

Assume $p$ is a condition and $Q$ a node in $p$ which belongs to $\calS^*\cup \calT^*$.
Now, we know that $p\vert Q$ and $h_p\restriction Q$ belong to $Q$. Moreover, $Q$ is elementary
in $\mathcal K^*$ and $\bbM^*_{\calS,\calT}$ is definable in this structure.
Therefore, there is a condition $q \in Q$ such that $\M_p \cap Q \subseteq \calM_q$,
$d_p(R)\subseteq d_q(R)$, for all $R\in \M_q\cap Q$, and $h_q$ extends $h_p\restriction Q$.
We will call such $q$ a \textit{reflection} of $p$ inside $Q$.
Note that if $q$ is a reflection of $p$ inside $Q$ then any condition $r \in Q$ which is stronger
than $q$ is also a reflection of $p$ inside $Q$. Let us say that $p$ {\em reflect} to $Q$
if $p| Q$ is already a reflection of $p$ to $Q$. Finally, let us say that $p$ is {\em reflecting}
if $p$ reflects to $W$, for all transitive nodes $W$ in $\M_p$.

We now have a version of Lemma \ref{scl} for our poset.

\begin{lem}\label{scl2} Let $p$ be a condition in $\bbM^*_{\calS , \calT}$, and let $Q$ be a node in $\calM_p$
which belongs to  $\calS^* \cup \calT^*$. Suppose that $q \in \bbM^*_{\calS , \calT}$
is a reflection of $p$ inside $Q$. Then there is $r \in \bbM^*_{\calS , \calT}$ with $r \leq p,q$ such that:
\begin{enumerate}
\item $\calM_r$ is the closure under intersection of $\calM_p \cup \calM_q$,
\item $\calM_r \cap Q = \calM_q$,
\item The small nodes of $\calM_r$ outside $Q$ are of the form $M$ or $M \cap W$,
where $M$ is a small node of $\calM_p$ and $W$ is a transitive node of $\calM_q$.
\end{enumerate}
\end{lem}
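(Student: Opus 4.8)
The plan is to take $r$ to be exactly the amalgam furnished by Neeman's Lemma~\ref{scl}, and to devote the work to checking the two extra demands built into $\bbM^{*}_{\calS,\calT}$. First observe that a reflection $q$ of $p$ inside $Q$ is in particular a condition of $\bbM^{\rm dec}_{\calS,\calT}$ lying in $Q$ that strengthens $p\vert Q$ in the order of $\bbM^{\rm dec}_{\calS,\calT}$: indeed $\calM_{p\vert Q}=\calM_p\cap Q\subseteq\calM_q$, and since $\calM_p\cap Q\subseteq\calM_q\cap Q$ we have $d_p(R)\subseteq d_q(R)$ for every $R\in\calM_{p\vert Q}$. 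Hence Lemma~\ref{scl} applies to $p$, $q$ and $Q$, producing $r\in\bbM^{\rm dec}_{\calS,\calT}$ with $r\leq_{\bbM^{\rm dec}}p,q$ and satisfying clauses~(1)--(3). Those clauses are exactly what the lemma requires, so it remains only to verify that $r$ satisfies condition~$(\ast)$ of Definition~\ref{refined side cond defn}, so that $r\in\bbM^{*}_{\calS,\calT}$, and that $h_p\subseteq h_r$ and $h_q\subseteq h_r$, so that $r\leq p$ and $r\leq q$ hold in $\bbM^{*}_{\calS,\calT}$.

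Two simple observations are used repeatedly: every transitive node $W$ contains $\omega_1$, so $W\cap\omega_1=\omega_1$, and consequently for any small node $M_0$ the node $M_0\cap W$ has the same $\omega_1$-level as $M_0$ while $\otp\bigl((M_0\cap W)\cap\theta\bigr)$ is a countable ordinal bounded by $\otp(M_0\cap\theta)$. To check $(\ast)$, let $M,N\in\pi_{\calS}(\calM_r)$ with $N\cap\omega_1\in M$; we must see $\otp(N\cap\theta)\in M$. If $M$ and $N$ both lie in $Q$ they are nodes of $\calM_q$ by clause~(2), and we are done since $q\in\bbM^{*}_{\calS,\calT}$; if both are nodes of $\calM_p$ we are done since $p\in\bbM^{*}_{\calS,\calT}$. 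In the mixed cases one invokes clause~(3): a small node of $\calM_r$ outside $Q$ is a small node of $\calM_p$ or is of the form $M_0\cap W$ with $M_0\in\pi_{\calS}(\calM_p)$ and $W\in\pi_{\calT}(\calM_q)$. When $M=M_0\cap W$, one has $N\cap\omega_1\in M\subseteq M_0$, the ordinal $\otp(N\cap\theta)$ is countable hence lies in $W$, and a case analysis on $N$ following Neeman --- using $(\ast)$ for $p$ when $N\in\calM_p$, and Claim~\ref{small-restriction} to push $N$, or $N$ together with its ambient small node of $\calM_p$, inside a model where $(\ast)$ is already available, when $N\in\calM_q$ or $N$ is itself of the form $N_0\cap W'$ --- places $\otp(N\cap\theta)$ in $M_0$ as well. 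The symmetric configuration $M\in Q$, $N\notin Q$ is treated identically, using that $N\cap\omega_1\in M\in Q$ forces $N\cap\omega_1<Q\cap\omega_1$ and hence, by Claim~\ref{small-restriction}, that $N\cap Q$ is a node of $\calM_q$ at the $\omega_1$-level of $N$.

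The height conditions are the real content, and this is where passing from $p\vert Q$ to a reflection $q$ is indispensable. Fix $(\alpha,\xi)\in\dom(h_p)$; its witnessing node of $\calM_p$ survives into $\calM_r$, so $(\alpha,\xi)\in\dom(h_r)$ and $h_r(\alpha,\xi)\geq h_p(\alpha,\xi)$, and it suffices to bound $\otp(M\cap\theta)$ by $h_p(\alpha,\xi)$ for every node $M$ of $\calM_r$ with $\alpha\in M$ and $M\cap\omega_1=\xi$. If $M$ is a node of $\calM_p$ this is the definition of $h_p$. If $M=M_0\cap W$ with $M_0\in\pi_{\calS}(\calM_p)$, $W\in\pi_{\calT}(\calM_q)$, then $M_0\cap\omega_1=\xi$ and $\alpha\in M\subseteq M_0$ by the observation above, so $\otp(M\cap\theta)\leq\otp(M_0\cap\theta)\leq h_p(\alpha,\xi)$. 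Finally, if $M$ is a node of $\calM_q$ then $\alpha\in M\subseteq Q$ and $\xi=M\cap\omega_1<Q\cap\omega_1$, so $(\alpha,\xi)\in\dom(h_p\restriction Q)$ by Lemma~\ref{restriction} and the description of that domain extracted from its proof; since $q$ is a reflection, $h_q$ extends $h_p\restriction Q$, whence $(\alpha,\xi)\in\dom(h_q)$ and $\otp(M\cap\theta)\leq h_q(\alpha,\xi)=(h_p\restriction Q)(\alpha,\xi)=h_p(\alpha,\xi)$. Thus $h_r(\alpha,\xi)=h_p(\alpha,\xi)$, so $h_p\subseteq h_r$. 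The argument for $h_q\subseteq h_r$ is analogous: a node of $\calM_q$ contributing to some $h_q(\alpha,\xi)$ is bounded by it by definition, while a node coming from the $\calM_p$-side, directly or as $M_0\cap W$, is bounded using once more that $h_q(\alpha,\xi)=h_p(\alpha,\xi)$ because $q$ is a reflection. Hence $r\leq p$ and $r\leq q$ in $\bbM^{*}_{\calS,\calT}$.

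The main obstacle is exactly this last paragraph. Had we amalgamated $p$ with $p\vert Q$, or with any extension of $p\vert Q$ lying in $Q$, the height function of the amalgam would not in general contain $h_p$: a node supplied from inside $Q$ at an $\omega_1$-level $\xi<Q\cap\omega_1$ could carry an order type different from the value $h_p$ records there, since $h_{p\vert Q}$ can be strictly below $h_p\restriction Q$, and then $p\vert Q$ and $p$ would impose incompatible demands on $h_r$. Demanding that $q$ be a reflection, i.e.\ $h_q\supseteq h_p\restriction Q$, forces $h_p$ and $h_q$ to agree on their common domain and makes the two extension requirements simultaneously satisfiable; building this into the verification of Lemma~\ref{scl} is the essential new ingredient beyond Neeman's argument.
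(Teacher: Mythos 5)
Your overall strategy matches the paper's: take $r$ from Neeman's Lemma~\ref{scl}, then check that $(\ast)$ holds for $\calM_r$ and that $h_p\subseteq h_r$ and $h_q\subseteq h_r$. Your treatment of the height functions is in fact more explicit than the paper's (the paper simply observes that the end nodes of $\calM_r$ are the union of the end nodes of $\calM_p$ and $\calM_q$ and lets that do the work), and your element-by-element bounding argument is correct; in particular you correctly identify the key use of the reflection property to reconcile the two sources of height data.

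The problem is in your verification of $(\ast)$ in the case $M\in\calM_q$ and $N\in\calM_r\setminus Q$, which you claim is ``treated identically'' to the opposite mixed case. It is not: $(\ast)$ is not symmetric in $M$ and $N$. Your argument is to pass to $N\cap Q$, which by Claim~\ref{small-restriction} is a node of $\calM_q$ at the same $\omega_1$-level as $N$, and then apply $(\ast)$ for $q$ to $N\cap Q$ and $M$. That gives $\otp(N\cap Q\cap\theta)\in M$. But what is needed is $\otp(N\cap\theta)\in M$, and $N\cap Q$ is in general a \emph{proper} subset of $N$ (the node $N$ sits below $Q$ in the $\in$-chain but need not be a subset of $Q$ when $Q$ is countable, since it is only contained in some transitive node $W$ with $W\in Q$), so $\otp(N\cap Q\cap\theta)$ can be strictly smaller than $\otp(N\cap\theta)$, and membership in $M$ is certainly not downward closed along ordinals. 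So this step does not close the case. The paper instead uses the reflection hypothesis at exactly this point: writing $N=N'\cap W$ with $N'\in\pi_\calS(\calM_p)$, $W\in\pi_\calT(\calM_q)$, and $N'\cap\omega_1\in Q$, the reflection of $p$ inside $Q$ supplies a node $N''\in\calM_q$ at the level $N'\cap\omega_1$ with $\otp(N''\cap\theta)=\otp(N'\cap\theta)$, and then $(\ast)$ for $q$ applied to $N''$ and $M$ yields the conclusion. You do invoke the reflection property for the $h$-extension part, but you omit it here, which is precisely where it is indispensable for $(\ast)$; without it, the case is not actually handled.
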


\begin{proof} Let $r$ be the condition given by Lemma \ref{scl}. We  need
to check that $\calM_r$ satisfies ($\ast$) and $h_r$ extends $h_p$ and $h_q$.
Suppose $N,M$ are countable nodes in  $\calM_r$ and $N\cap \omega_1 \in M$.
We need to check that $\otp (N)\in M$. If $N$ and $M$ are both in $\calM_p$
or $\calM_q$ this follows from the fact that $\calM_p$ and $\calM_q$ satisfy ($\ast$).
Now, suppose $N\in \calM_q$ and $M\in \calM_r \setminus \calM_q$.
Then $M$ is of the form $M' \cap W$, for some transitive node $W$ of $\calM_q$.
Since $N\in Q$ it follows that $\otp(N) \in Q$, so if $M\cap \omega_1 \geq Q\cap \omega_1$,
then we have $\otp(N)\in M$. If $M\cap \omega_1 < Q\cap \omega_1$, then
by Fact \ref{small-restriction}, $M'\cap Q \in Q$ and hence $M'\cap Q \in \calM_q$,
therefore our conclusion follows from the fact that $\calM_q$ satisfies ($\ast$)
and $M\cap \omega_1 =M'\cap \omega_1$.
Now, suppose $M\in \calM_q$ and $N\in \calM_r \setminus \calM_q$.
Then $N$ is of the form $N'\cap W$, for some countable node $N'\in \calM_p$ and
a transitive node $W\in \calM_q$.
Since $N\cap \omega_1 \in M$ and $M\in Q$ it follows that $N'\cap \omega_1 \in Q$.
By our assumption $q$ is a reflection of $p$ inside $Q$, so there is a node $N''\in \calM_q$
such that $N''\cap \omega_1 = N'\cap \omega_1$ and $\otp(N'')=\otp(N')$.
Now, $N''$ and $M$ both belong to $\calM_q$ which satisfies ($\ast$), so $\otp(N'')\in M$.
Therefore, in all cases $\otp(N)\in M$.

Now, we check that $h_r$ extends $h_p$ and $h_q$.
Since every node in $\calM_r$ is either in $\calM_q$ or is of the form $M\cap W$,
for some countable node $M$ of $\calM_r$ and transitive $W\in \calM_q$, and
$q$ is a reflection of $p$ inside $Q$ it follows that the set of the end nodes of
$\calM_r$ is precisely the union of the end nodes of $\calM_p$ and the end nodes
of $\calM_q$. This implies that $h_r$ extends $h_p$ and $h_q$.
This completes the proof of the lemma.
\end{proof}

We have a couple of immediate corollaries.

\begin{cor}\label{fully reflect} For every condition $p\in \bbM^*_{\calS,\calT}$ there is a reflecting
condition $q\leq p$ which has the same top model as $p$.
\qed
\end{cor}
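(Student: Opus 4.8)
The plan is to produce the reflecting condition $q$ by adding, for each transitive node $W$ of $\M_p$, a reflection of $p$ inside $W$ and then amalgamating via Lemma \ref{scl2}, working from the top of $\M_p$ downward. First I would pass to a stronger condition whose transitive nodes all lie in $\calT^*$: given $p$, for each transitive node $W$ of $\M_p$ we have $W \prec \mathcal K$, but we need $W \prec \mathcal K^*$; since $\calT^*$ is a relative club in $\calT$ and each $W$ is a $V_\alpha$-like node, one checks that in fact every transitive node of a condition is already in $\calT^*$ (the predicates $\calS,\calT$ are definable enough over $V_\alpha$ for $\alpha$ of uncountable cofinality), so this step is really just a remark. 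The same applies to making sure the relevant countable nodes are in $\calS^*$: for the statement we only need the transitive nodes to reflect, and for those Lemma \ref{scl2} requires $Q \in \calS^* \cup \calT^*$, which is satisfied.

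Next I would enumerate the transitive nodes of $\M_p$ as $W_0 \in W_1 \in \cdots \in W_{m-1}$ and build a descending sequence $p = p_0 \geq p_1 \geq \cdots \geq p_m = q$ with the same top model as $p$, so that after stage $k$ the condition $p_k$ reflects to each of $W_{m-1}, W_{m-2}, \ldots, W_{m-k}$, and moreover these are still the transitive nodes of $\M_{p_k}$ lying in that range. At stage $k+1$: let $W = W_{m-k-1}$. By Lemma \ref{restriction} we have $h_{p_k} \restriction W \in W$ and clearly $p_k | W \in W$; since $W \prec \mathcal K^*$ and $\bbM^*_{\calS,\calT}$ is definable over $\mathcal K^*$, elementarity gives a reflection $q_W \in W$ of $p_k$ inside $W$. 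Apply Lemma \ref{scl2} with $Q = W$ to $p_k$ and $q_W$ to obtain $p_{k+1} \leq p_k, q_W$ with $\M_{p_{k+1}} \cap W = \M_{q_W}$ — that is, $p_{k+1} | W = q_W$, so $p_{k+1} | W$ is itself a reflection of $p_{k+1}$ inside $W$ (any extension of a reflection in $W$ is again a reflection, as noted after the definition), i.e. $p_{k+1}$ reflects to $W$. The top model is unchanged because Lemma \ref{scl2}(1) says $\M_{p_{k+1}}$ is the closure under intersection of $\M_{p_k} \cup \M_{q_W}$ and all nodes of $q_W$ lie in $W$, hence below the top model of $p_k$.

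The one point that needs care — and which I expect to be the main obstacle — is persistence of the earlier reflections: after amalgamating at $W = W_{m-k-1}$ we must check that $p_{k+1}$ still reflects to each $W_j$ with $j > m-k-1$, i.e. that the new small nodes introduced outside $W$ do not spoil the restriction $p_{k+1} | W_j = p_k | W_j$. But by Lemma \ref{scl2}(2) we have $\M_{p_{k+1}} \cap W = \M_{q_W} \subseteq W$, and since $W \in W_j$ all the action at this stage takes place strictly below $W_j$; the nodes of $\M_{p_{k+1}}$ of rank $\geq \rank(W_j)$ are exactly those of $\M_{p_k}$, and the decorations and the function $h$ restricted to $W_j$ are unchanged (the end nodes of $\M_{p_{k+1}}$ are the end nodes of $\M_{p_k}$ together with those of $q_W$, all of the latter inside $W \in W_j$). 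Hence $p_{k+1} | W_j = p_k | W_j$, which by inductive hypothesis is a reflection of $p_k$, and therefore of $p_{k+1}$, inside $W_j$. Finally, since the transitive nodes of $\M_q = \M_{p_m}$ are precisely $W_0, \ldots, W_{m-1}$ (Lemma \ref{scl2}(3) only adds small nodes), $q$ reflects to every transitive node in $\M_q$, so $q$ is reflecting, $q \leq p$, and $q$ has the same top model as $p$. $\square$
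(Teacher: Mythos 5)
Your overall strategy — a downward induction over the transitive nodes, reflecting into each one and amalgamating via Lemma \ref{scl2} — is the natural route, and I believe it is the intended argument (the paper states the corollary with no proof). But as written the proof has two problems.

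First, the claim that \emph{every} transitive node of a condition already lies in $\calT^*$ is not justified and is in fact false in general. A node $W \in \calT$ need only satisfy $W \prec \mathcal K$, while membership in $\calT^*$ requires $W \prec \mathcal K^* = (K,\in,\calS,\calT,\ldots)$, and the added predicates are not first-order definable over $V_\theta$. For a concrete counterexample, take $\alpha$ to be the least ordinal in $T$: then $\calT \cap V_\alpha = \emptyset$, so $V_\alpha$ fails to satisfy $\exists x\,\calT(x)$ while $\mathcal K^*$ does, and hence $V_\alpha \in \calT \setminus \calT^*$. The fix here is not to argue that the node lies in $\calT^*$, but to note that the notion of ``$p$ reflects to $W$'' is only defined for $W \in \calS^*\cup\calT^*$, so ``reflecting'' should be read as quantifying over transitive nodes in $\calT^*$; you then simply run your induction over those nodes only, which is all that is needed for the later iteration lemmas.

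Second, in the persistence step the key assertion ``$p_{k+1}\vert W_j = p_k\vert W_j$'' is wrong. The amalgamation at $W = W_{m-k-1}$ does leave the nodes of rank $\geq \rank(W_j)$ unchanged, but the \emph{restriction} to $W_j$ collects the nodes \emph{below} $W_j$, and these have grown: all of $\M_{q_W}$ and the new intersections $M\cap W'$ sit inside $W \in W_j$ and hence inside $W_j$, so $\M_{p_{k+1}}\cap W_j \supsetneq \M_{p_k}\cap W_j$. What actually persists is not the restriction but the reflection inequality. Using the fact (from the proof of Lemma \ref{scl2}) that the end nodes of $\M_{p_{k+1}}$ are the union of those of $\M_{p_k}$ and of $\M_{q_W}$, one gets $h_{p_{k+1}} = h_{p_k}\cup h_{q_W}$, and, by the same reasoning applied to the subchain below $W_j$, $h_{p_{k+1}\vert W_j} = h_{p_k\vert W_j} \cup h_{q_W}$. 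Since $h_{q_W}\subseteq W \subseteq W_j$ gives $h_{p_{k+1}}\restriction W_j = (h_{p_k}\restriction W_j) \cup h_{q_W}$, the inductive hypothesis $h_{p_k\vert W_j}\supseteq h_{p_k}\restriction W_j$ yields $h_{p_{k+1}\vert W_j} \supseteq h_{p_{k+1}}\restriction W_j$, which is exactly ``$p_{k+1}$ reflects to $W_j$''. So the conclusion you want is correct, but the argument needs to track the height functions rather than claim equality of restrictions.
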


\begin{cor}\label{sp} $\bbM^*_{\calS , \calT}$ is $\calS^* \cup \calT^*$-strongly proper.
In particular, if $\calS$ is stationary then $\bbM^*_{\calS , \calT}$ preserves $\omega_1$,
and if $\calT$ is stationary then $\bbM^*_{\calS , \calT}$ preserves $\theta$.

\qed
\end{cor}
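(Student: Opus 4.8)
The plan is to establish directly that $\bbM^*_{\calS,\calT}$ admits strongly generic conditions for each model in $\calS^*\cup\calT^*$, and then to read off the two preservation statements in the standard way. Recall that to show $\bbM^*_{\calS,\calT}$ is $\calS^*\cup\calT^*$-strongly proper it suffices to prove: for every $Q\in\calS^*\cup\calT^*$ and every $p\in\bbM^*_{\calS,\calT}\cap Q$ there is $q\le p$ such that for every dense $D\in Q$ and every $r\le q$ some element of $D\cap Q$ is compatible with $r$ (such a $q$ is a $(Q,\bbM^*_{\calS,\calT})$-strongly generic condition, and this is the form of strong genericity used in \cite{Neeman}; it guarantees that $q$ forces $\dot G\cap Q$ to be a $V$-generic filter on $\bbM^*_{\calS,\calT}\cap Q$). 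Here $\bbM^*_{\calS,\calT}\cap Q$ makes sense because $\bbM^*_{\calS,\calT}$ is definable in $\mathcal K^*$ and $Q\prec\mathcal K^*$. So fix $Q\in\calS^*\cup\calT^*$ and $p\in\bbM^*_{\calS,\calT}\cap Q$. By Lemma \ref{M_p*} there is a condition $p^Q\le p$ with $Q$ the top node of $\calM_{p^Q}$, and I claim $p^Q$ is $(Q,\bbM^*_{\calS,\calT})$-strongly generic.

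To prove the claim, let $D\in Q$ be dense in $\bbM^*_{\calS,\calT}$ and let $r\le p^Q$. Since $\calM_{p^Q}\subseteq\calM_r$, $Q$ is a node of $\calM_r$ lying in $\calS^*\cup\calT^*$, so, as recorded in the discussion preceding Lemma \ref{scl2}, $r\vert Q\in Q$, and by Lemma \ref{restriction} also $h_r\restriction Q\in Q$. By elementarity of $Q$ in $\mathcal K^*$ and definability of $\bbM^*_{\calS,\calT}$ there is a reflection $q$ of $r$ inside $Q$: a condition $q\in Q$ with $\calM_r\cap Q\subseteq\calM_q$, with $d_r(R)\subseteq d_q(R)$ for all $R\in\calM_q\cap Q$, and with $h_q$ extending $h_r\restriction Q$. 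Applying density and elementarity of $Q$ once more, fix $t_0\in D\cap Q$ with $t_0\le q$; as noted just after the definition of reflection, $t_0$ is itself a reflection of $r$ inside $Q$. Now apply Lemma \ref{scl2} with $r$, $Q$, $t_0$ in the roles of ``$p$'', ``$Q$'', ``$q$'' respectively: it produces a condition $t\le r,t_0$ in $\bbM^*_{\calS,\calT}$. Hence $t_0\in D\cap Q$ is compatible with $r$, proving the claim. This is the heart of the argument, and the one conceptual point is that one cannot use $r\vert Q$ directly — since $h_r$ need not extend $h_{r\vert Q}$, $r$ need not extend $r\vert Q$ — so one passes to a reflection of $r$ inside $Q$ and invokes Lemma \ref{scl2} in place of \cite{Neeman}'s Lemma \ref{scl}. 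That point is already absorbed into Lemma \ref{scl2}, so I do not expect any genuine obstacle in this corollary; what remains is a bookkeeping check.

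Finally, the two preservation statements follow from strong properness together with stationarity, by exactly the arguments for the corresponding facts in \cite{Neeman}. If $\calS$ is stationary then $\calS^*$ is stationary, and the existence of $(Q,\bbM^*_{\calS,\calT})$-strongly generic conditions for $Q\in\calS^*$ gives, via the usual countable-elementary-submodel argument (an $M$-generic condition forces the range of any name for a function $\omega\to\omega_1$ to be bounded below $M\cap\omega_1$), that $\bbM^*_{\calS,\calT}$ adds no surjection of $\omega$ onto $\omega_1$; hence $\omega_1$ is preserved. If $\calT$ is stationary then $\calT^*$ is stationary; for $W\in\calT^*$, say $W=V_\alpha$, the condition $p^W$ from Lemma \ref{M_p*} is $(W,\bbM^*_{\calS,\calT})$-strongly generic, so it forces $\dot G\cap W$ to be $V$-generic for the poset $\bbM^*_{\calS,\calT}\cap W$, which has size less than $\theta$; since such $\alpha$ are cofinal in $\theta$ this prevents the collapse of $\theta$, again as in \cite{Neeman}. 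The only genuinely new ingredient in the whole proof is the use of reflections and of Lemma \ref{scl2}; everything else is routine.
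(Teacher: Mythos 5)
Your overall route — pass to $p^Q$ via Lemma~\ref{M_p*}, and for $r\le p^Q$ use a reflection of $r$ inside $Q$ together with Lemma~\ref{scl2} to manufacture a common extension — is exactly the argument the corollary is extracting from the preceding lemmas, so the core of the proposal is right. But the criterion you state at the outset is not strong genericity; it is plain genericity. You write that it suffices to find $q\le p$ such that for every dense $D\in Q$ and every $r\le q$ some element of $D\cap Q$ is compatible with $r$, and you assert that this is the notion from \cite{Neeman} and that it guarantees $q\Vdash \dot G\cap Q$ is $V$-generic on $\bbM^*_{\calS,\calT}\cap Q$. Neither claim holds: that condition only ensures $\dot G$ meets $D\cap Q$ for dense $D\in Q$, whereas $V$-genericity of $\dot G\cap Q$ over $\bbM^*_{\calS,\calT}\cap Q$ requires meeting every set in $V$ dense in the subposet $\bbM^*_{\calS,\calT}\cap Q$, and such sets need not arise as $D\cap Q$ with $D\in Q$ (they can encode parameters, such as $r$ itself, lying outside $Q$). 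The correct requirement is the residue form: for every $r\le q$ there must exist a single $\mathrm{res}(r)\in\bbM^*_{\calS,\calT}\cap Q$ such that \emph{every} $s\le \mathrm{res}(r)$ with $s\in Q$ is compatible with $r$. Fortunately your argument proves precisely this: the reflection $q$ of $r$ inside $Q$ is such a residue, because any $t\le q$ in $Q$ is again a reflection of $r$ inside $Q$ and hence compatible with $r$ by Lemma~\ref{scl2} — you should state this rather than fix a dense $D$ and only exhibit one $t_0\in D\cap Q$. With that repair, the proof is correct, and (modulo this rephrasing) it is the argument the paper has in mind; the remarks on preservation of $\omega_1$ and $\theta$ are the standard consequences, noting that for transitive $W=V_\alpha$ the residue form (not the master-condition form) is what is actually needed to factor the forcing through $\bbM^*_{\calS,\calT}\cap W$ and conclude that $\theta$ survives.
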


From now on, we assume that $\calS$ and $\calT$ are stationary families of subsets of $K$.
Suppose that $G$ is a $V$-generic filter over $\bbM^*_{\calS , \calT}$.
Let $\M_G$ denote $\bigcup \{ \M_p : p \in G\}$.
Then $\M_G$ is an $\in$-chain of models in $\calS \cup \calT$. Hence, $\M_G$ is totally
ordered by $\in^*$, where $\in^*$ denotes the transitive closure of  $\in$.
If $M,N$ are members of $\M_G$ with $M\in^* N$ let $(M,N)_G$ denote the interval consisting
of all $P \in \M_G$ such that $M\in^*P \in^*N$.
The following lemma is the main reason we are working with the decorated version
of the side condition poset.

\begin{lem}\label{continuity} Suppose $W$ and $W'$ are two consecutive elements
of $\M_G \cap \calT$. Then the $\in$-chain $(W,W')_G$ is continuous.
\end{lem}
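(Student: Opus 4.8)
The goal is to show that the $\in$-chain $(W,W')_G$ between two consecutive transitive nodes of $\M_G$ has no ``gaps'' at limits: if $(P_\xi)_{\xi<\lambda}$ enumerates an initial segment of $(W,W')_G$ in increasing order and this segment has a supremum inside $(W,W')_G$, then that supremum equals $\bigcup_{\xi<\lambda} P_\xi$. Since all nodes strictly between $W$ and $W'$ are countable (a transitive node between them would contradict consecutiveness, and a countable node containing $W$ would have to contain a transitive node, forcing $W'$ or something below $W'$ into it), the content is: whenever $P$ is a node in $(W,W')_G$ that is not an immediate successor of any node, then $P = \bigcup \{ P' \in \M_G : P' \in^* P, P' \in (W,W')_G\}$. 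The plan is to prove this by a genericity/density argument using the decoration.

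\textbf{Key steps.} First I would fix a name for the potential counterexample and reduce to a density statement: given a condition $p \in G$ whose chain contains both $W$, $W'$ and some countable node $P \in (W,W')_G$ which $p$ ``thinks'' is a limit node (i.e.\ $P$ is not the immediate successor in $\M_p$ of any node, so $d_p$ of $P$'s predecessor — or rather the decoration sitting below $P$ — records some finite set), I must produce an extension $q \leq p$ forcing a new node $R$ into the interval $(P',P)_G$ for the relevant predecessor $P'$, strictly below $P$. The point of the decoration is precisely this: by the definition of $\bbM^{\rm dec}_{\calS,\calT}$, $d_p(P')$ is a finite subset of $P$, and if $P$ genuinely had an immediate predecessor in the final chain we could not move it; but since $P$ is a limit-type node, density of the forcing lets us slot in $R \prec P$ with $d_p(P') \subseteq R$ and $R$ above $P'$. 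Concretely, pick $R \in \calS$ with $p\restriction P, d_p(P'), \dots \in R \in^* P$, of the form guaranteed by Lemma~\ref{M_p*} applied inside $P$, add $R$ (and its intersections with transitive nodes of $p$ below $P$, which all lie below $P$ anyway) to $\M_p$, and set the decorations so that $q := (\M_p \cup \{R,\dots\}, d_q) \leq p$; one checks $(\ast)$ and that $h_q \supseteq h_p$ as in Lemma~\ref{scl2}. Iterating this density, $\bigcup\{P' \in (W,W')_G : P' \in^* P\}$ has the same $\omega_1$-part and same ordinal-order-type and same ``true'' transitive structure as $P$, which for elements of $\calS$ (all being elementary in the same $\mathcal K$) forces them equal.

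\textbf{Main obstacle.} The delicate point is not the move itself but verifying that the inserted node $R$ can be chosen so that adding it respects condition $(\ast)$ and does not disturb $h_p$ — i.e.\ that $R$ is \emph{not} an end node of $\M_q$, so that no new values are added to the domain of the height function, and that $\otp(R\cap\theta)$ lands where $(\ast)$ requires relative to $P$. This is exactly the reflection phenomenon isolated before Lemma~\ref{scl2}: one wants $p$ (or a small strengthening) to reflect to $P$, or rather to use that $p\restriction P \in P$ together with elementarity of the countable node $P$ to find $R \in P$ with the correct order-type below $\otp(P\cap\theta)$ and above the order-types of all nodes of $\M_p$ below $P$. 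I expect the cleanest route is to first replace $p$ by a reflecting condition with the same top model via Corollary~\ref{fully reflect}, so that $p$ reflects to every transitive node below $P$; then $p\restriction P$ carries all the information needed to find a legitimate $R$ strictly inside $P$ by the elementarity of $P$ in $\mathcal K$, and the verification that $q\le p$ becomes a routine instance of the checks already performed in Lemma~\ref{scl2}. With that, continuity of $(W,W')_G$ follows by a standard density-plus-genericity argument, completing the proof.
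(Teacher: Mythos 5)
Your plan correctly identifies that the interval $(W,W')_G$ consists only of countable models and that the decoration is the mechanism behind continuity, but the argument you sketch has two substantive gaps, and the route it takes is considerably more complicated than necessary.

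First, the density statement you aim for is not the right one. You propose to "produce an extension $q\le p$ forcing a new node $R$ into the interval $(P',P)_G$," i.e.\ a density statement that is \emph{not} parametrized by a point of $P$. That statement, even iterated, does not give $P=\bigcup\{P'\in(W,W')_G: P'\in^* P\}$: merely inserting more nodes below $P$ does not ensure they exhaust $P$. The correct density statement is: for each fixed $x\in M$ (where $M$ is the limit node), the set of $r$ whose chain has a countable node $R\in^* M$ with $x\in R$ is dense below $p$. To prove \emph{that}, one takes $q\le p$, lets $Q$ be the largest countable node of $\M_q$ below $M$, and \emph{enlarges $d_q(Q)$ to include $x$} (this is where the decoration enters — you gesture at $d_p(P')\subseteq R$ but never make the enlargement to capture the given $x$, which is the whole point). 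Then one observes that, since $q$ still forces $M$ to be a limit member of $\M_G$, some $r\le q$ must have a node $R$ strictly between $Q$ and $M$ — this is an \emph{existence} statement obtained by contradiction from the limit hypothesis, not something one needs to construct. The order on $\bbM^*_{\calS,\calT}$ then forces $d_q(Q)\subseteq d_r(Q)\in R$, hence $x\in R$. Your attempt to \emph{build} the new node $R$ directly (with appeals to Lemma~\ref{M_p*}, reflection, and Corollary~\ref{fully reflect}) is what drags in the height-function and $(\ast)$ verifications you worry about; the paper's proof never constructs $r$ and therefore never faces those issues. The only place the height function shows up is in the trivial observation that enlarging a decoration without changing the model chain does not change $h_q$ at all.

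Second, the closing step — "$\bigcup\{P'\in(W,W')_G:P'\in^*P\}$ has the same $\omega_1$-part and order type and true transitive structure as $P$, which forces them equal" — is not a valid argument. Two countable elementary submodels of $\mathcal K$ with the same trace on $\omega_1$ and the same order type need not be equal, and "same true transitive structure" is not a defined notion; moreover the agreement of order types is essentially what continuity is supposed to give you, so appealing to it here is circular. What is actually needed, and what the decoration delivers, is the pointwise inclusion: for each $x\in M$ some node below $M$ in $\M_G$ contains $x$. That is the content of the density argument sketched above, and it is all that is required.
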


\begin{proof} Note that $(W,W')_G$ consists entirely of countable models
and therefore the membership relation is transitive on $(W,W')_G$.
Suppose $M$ is a limit member of $(W,W')_G$. We need to show that
$M$ is the union of the $\in$-chain $(W,M)_G$. Let $p\in G$ be a condition such that $M\in \M_p$
and $p$ forces that $M$ is a limit member of $(W,W')_G$.
Given any $x\in M$ and a condition $q\leq p$ we show that there is $r\leq q$ and
a countable node that $R\in \M_r\cap M$ such that $x\in R$.
We may assume that there is a countable model in $\M_q$ between $W$ and $M$.
Let $Q$ be the $\in^*$-largest such model. By increasing $d_q(Q)$ if necessary,
we may assume that $x\in d_q(Q)$. Since $p$ forces that $M$ is a limit member of $\M_G$
so does $q$. Therefore, there exists $r \leq q$ such that $\M_r$ contains
a countable node between $Q$ and $M$. Let $R$ be the $\in^*$-least such node.
By the definition of the order relation on $\bbM^*_{\calS, \calT}$
we must have that $d_q(P)\in R$ and hence $x\in R$, as desired.
\end{proof}

Note that Lemma \ref{continuity} implies in particular that if $W'$ is a successor element
of $\M_G \cap \calT$ then $W'$ has cardinality $\omega_1$ in $V[G]$. Therefore, if $\beta =W'\cap \ORD$,
one way to represent the $\beta$-th canonical function $f_\beta$ in $V[G]$  is the following.
Let $W$ be the predecessor of $W'$ in $\M_G\cap \calT$. Since $\calS$ is stationary in $K$,
so is $\calS \cap W'$ in $W'$. Therefore, $(W,W')_G$ will be an $\in$-chain of length $\omega_1$.
Let $\{M_\xi : \xi <\omega_1\}$ be the increasing enumeration of this chain.
Then we can let $f_\beta(\xi)=\otp(M_\xi \cap \beta)$, for all $\xi$.
Note that $M_\xi \cap \omega_1 =\xi$, for club many $\xi$.

Now, let $h_G$ denote $\bigcup \{ h_p: p\in G\}$. Then $h_G$ is a partial function
from $\theta \times \omega_1$ to $\omega_1$. Let $h_{G,{\alpha}}$ be a partial function
from $\omega_1$ to $\omega_1$ defined by letting $h_{G,{\alpha}}(\xi)=h_G(\alpha,\xi)$,
for every $\xi$ such that $(\alpha,\xi)\in \dom (h_G)$.
By Lemma \ref{continuity} and the above remarks we have the following.

\begin{cor}\label{dominating} For every $\alpha <\theta$ the function $h_{G,\alpha}$ is defined on a club in $\omega_1$.
Moreover, $h_{G,\alpha}$ dominates under $<_{{\rm NS}_{\omega_1}}$ all the canonical function $f_\beta$,
for $\beta <\theta$.

\qed
\end{cor}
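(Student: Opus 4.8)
The goal is to prove Corollary \ref{dominating}: for every $\alpha<\theta$, the function $h_{G,\alpha}$ is defined on a club in $\omega_1$ and dominates every canonical function $f_\beta$ with $\beta<\theta$ modulo $\mathrm{NS}_{\omega_1}$.

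\medskip

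\noindent\textbf{Proof plan.} Fix $\alpha<\theta$. First I would establish the domain statement. Recall from Definition \ref{g} that $(\alpha,\xi)\in\dom(h_p)$ exactly when $\calM_p$ has a countable node $M$ with $M\cap\omega_1=\xi$ and $\alpha\in M$, and that on the relevant interval $h_p$ is constant equal to $\otp(M\cap\theta)$ for the largest such node. So $h_{G,\alpha}(\xi)$ is defined precisely for those $\xi$ that are realized as $M\cap\omega_1$ for some countable $M\in\M_G$ with $\alpha\in M$. The plan is to fix in $G$ a transitive node $W'$ with $\alpha\in W'$ (by a density argument using Lemma \ref{M_p*}: given any condition $p$, pick $W\in\calT$ with $p,\alpha\in W$ and extend $p$ to $p^W$), and let $W$ be its predecessor in $\M_G\cap\calT$. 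By Lemma \ref{continuity} and the stationarity of $\calS$, the chain $(W,W')_G=\{M_\xi:\xi<\omega_1\}$ is a continuous increasing $\in$-chain of length $\omega_1$, and $M_\xi\cap\omega_1=\xi$ for club-many $\xi$; moreover $\alpha\in W'=\bigcup_\xi M_\xi$, so $\alpha\in M_\xi$ for all large $\xi$, say for all $\xi\geq\xi_0$. For such $\xi$ with $M_\xi\cap\omega_1=\xi$ we get $(\alpha,\xi)\in\dom(h_G)$. Hence $\dom(h_{G,\alpha})$ contains a club $C_\alpha\subseteq\omega_1$ (and on $C_\alpha$, genericity/continuity also pins down $h_{G,\alpha}(\xi)=\otp(M_\xi\cap\theta)$, the o.t. of the top node at level $\xi$).

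\medskip

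\noindent\textbf{Domination.} Fix $\beta<\theta$; I need $h_{G,\alpha}>_{\mathrm{NS}_{\omega_1}}f_\beta$. By the remarks preceding the corollary, one representative of $f_\beta$ in $V[G]$ is obtained as follows: run the same argument with a transitive node containing both $\alpha$ and $\beta$ (shrinking the club if necessary so that a single $W'\in\M_G\cap\calT$ has $\alpha,\beta\in W'$; this is possible since $\theta$ is regular in $V[G]$ by Corollary \ref{sp}, or directly by a density argument). With $(W,W')_G=\{M_\xi:\xi<\omega_1\}$ continuous, $f_\beta(\xi):=\otp(M_\xi\cap\beta)$ is a canonical-function representative, and for club-many $\xi$ we have $h_{G,\alpha}(\xi)=\otp(M_\xi\cap\theta)$. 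Since $\beta<\theta$ and each $M_\xi$ is a countable elementary submodel containing $\beta$ (for large $\xi$), we have $\beta\in M_\xi$, so $\otp(M_\xi\cap\beta)<\otp(M_\xi\cap(\beta+1))\leq\otp(M_\xi\cap\theta)$: the key point is that $M_\xi\cap\theta$ properly end-extends $M_\xi\cap\beta$ past the ordinal $\beta$ itself, giving strict inequality $f_\beta(\xi)<h_{G,\alpha}(\xi)$ on a club. (Here I also use uniqueness of canonical functions up to $=_{\mathrm{NS}_{\omega_1}}$, recalled in \S 1, so it suffices to beat this one representative.)

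\medskip

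\noindent\textbf{Main obstacle.} The delicate point is not the strict inequality — that is immediate once the models are lined up — but rather the bookkeeping that makes the \emph{same} continuous chain $(W,W')_G$ simultaneously compute $h_{G,\alpha}(\xi)$ as $\otp(M_\xi\cap\theta)$ \emph{and} compute (a representative of) $f_\beta(\xi)$ as $\otp(M_\xi\cap\beta)$, for club-many $\xi$. For the former one must check that for club-many $\xi$ the top countable node of $\M_G$ with $M\cap\omega_1=\xi$ and $\alpha\in M$ is exactly $M_\xi$ (not some larger model strictly above $W'$ with the same $\omega_1$-intersection — which cannot happen below $W'$ by continuity, and above $W'$ one argues via a further density/genericity argument that the $\mathrm{sup}$ in Definition \ref{g} is attained at level $\xi$ by $M_\xi$ on a club). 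Once this identification is in place on a common club, intersecting the finitely many clubs $C_\alpha, C_\beta$ and the club $\{\xi:M_\xi\cap\omega_1=\xi\}$ finishes the argument. I would present this identification carefully, citing Lemma \ref{continuity} for the interval below $W'$ and a short density argument for the behavior at the endpoint, and leave the remaining verifications as routine.
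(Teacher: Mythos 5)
Your overall strategy matches the paper's intended argument: pick a successor transitive node $W'\in\M_G\cap\calT$ containing the relevant ordinals, use Lemma~\ref{continuity} to get the continuous chain $(W,W')_G=\{M_\xi:\xi<\omega_1\}$, read off the domain of $h_{G,\alpha}$ from the $M_\xi$ containing $\alpha$, and compare $\otp(M_\xi\cap\theta)$ to $\otp(M_\xi\cap\beta)$. That is exactly what the paper is appealing to.

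However, the paragraph you label ``Main obstacle'' introduces a step that is both unnecessary and, as far as I can see, not actually provable. You try to show that for club-many $\xi$ the maximum in Definition~\ref{g} is \emph{attained} at $M_\xi$, i.e.\ $h_{G,\alpha}(\xi)=\otp(M_\xi\cap\theta)$. But Definition~\ref{g} is a $\max$ over \emph{all} countable nodes $N\in\M_G$ with $\alpha\in N$ and $N\cap\omega_1=\xi$, and nothing stops $\M_G$ from having such a node $N$ sitting above $W'$; any such $N$ satisfies $M_\xi\in^*N$, hence $\otp(N\cap\theta)>\otp(M_\xi\cap\theta)$, and the requirement $h_p\subseteq h_q$ merely freezes the value of $h_G(\alpha,\xi)$ once it is decided --- it does not force it to be decided by $M_\xi$ rather than by some such $N$ that appears first in the generic. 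The ``short density argument'' you defer to would have to rule out, for club-many $\xi$, that any node above $W'$ containing $\alpha$ ever has $\omega_1$-intersection $\xi$, and I do not see how to arrange that.

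Fortunately you never need the equality. From Definition~\ref{g}, $M_\xi$ is one of the candidates over which the $\max$ is taken, so for every $\xi$ in your club $C_\alpha$ you get the one-sided bound
\[
h_{G,\alpha}(\xi)\;\geq\;\otp(M_\xi\cap\theta)\;>\;\otp(M_\xi\cap\beta),
\]
the last strict inequality holding because $\beta\in M_\xi$ (so $M_\xi\cap\beta\subsetneq M_\xi\cap\theta$), which you already observed. Since $\xi\mapsto\otp(M_\xi\cap\beta)$ represents $f_\beta$ on a club (your computation $\bigcup_\xi(M_\xi\cap\beta)=W'\cap\beta=\beta$ is the right way to extend the paper's remark from $\beta=W'\cap\ORD$ to arbitrary $\beta\in W'$; alternatively one can invoke monotonicity of canonical functions under $<_{\NS_{\omega_1}}$), this already gives $f_\beta<_{\NS_{\omega_1}}h_{G,\alpha}$. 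So your proof is correct once the ``identification'' step is replaced by the trivial $\geq$.
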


\section{Factoring the Side Condition Poset}

We now let $\mathcal K =(V_\theta,\in,\ldots)$, for some inaccessible cardinal $\theta$.
Let $T$ be the set of all $\alpha <\theta$ of uncountable cofinality such that $V_\alpha \prec \mathcal K$
and let $\calT=\{ V_\delta : \delta \in T\}$. Finally, let $\calS$ be the set of all countable elementary
submodels of $\mathcal K$. Clearly, the pair $(\calS,\calT)$ is appropriate. Let $\calS^*$ and $\calT^*$
be defined as before and let $T^*=\{ \alpha : V_\alpha \in \calT^*\}$.
We start by analyzing the factor posets of  $\bbM^*_{\calS,\calT}$.
Suppose $\delta \in T^*$ and let $p_\delta=(\{ V_\delta\} ,\emptyset)$. Then, by Lemma \ref{scl2},
the map $i_\delta: \bbM^*_{\calS, \calT}\cap V_\delta \to \bbM^*_{\calS , \calT}\restriction p_\delta$
given by $i_\delta (p) = (\M_p\cup \{ V_\delta \},d_p)$ is a complete embedding.
Fix a $V$-generic filter $G_\delta$ over $\bbM^*_{\calS, \calT}\cap V_\delta$. Let $\M_{G_\delta}$ denote
$\bigcup \{ \M_p: p\in G_\delta\}$ and let $h_{G_\delta}$ be the derived height function,
i.e. $h_{G_\delta}=\bigcup \{ h_p: p\in G_\delta\}$.
Let $\bbQ_\delta$ denote the factor forcing $\bbM^*_{\calS, \calT}\! \! \restriction \! p_\delta/i_\delta[G_\delta]$.
We can identify $\bbQ_\delta$ with the set of all conditions $p \in \bbM^*_{\calS, \calT}$
such that $V_\delta\in \M_p$, $p$ reflects to $V_\delta$ and $p | V_\delta \in G_\delta$.

We make the following definition in $V[G_\delta]$.

\begin{defn}\label{S-delta} Let $\calS_\delta$ be the collection of all  $M\in \calS$ such that $M\nsubseteq V_\delta$,
$M\cap V_\delta \in \M_{G_\delta}$ and ${\rm o.t.}(M\cap \theta) \leq h_{G_\delta}(\alpha,M\cap \omega_1)$,
for all $\alpha \in M\cap \delta$.
\end{defn}

We also let  $T_\delta=T\setminus (\delta+1)$ and $\calT_\delta=\{ V_\gamma: \gamma \in T_\delta\}$.
We define $\calS_\delta^*$ and $\calT_\delta^*$ as before.
Clearly, the pair $(\calS_\delta,\calT_\delta)$ is appropriate. We show that $\bbQ_\delta$ is very
close to $\bbM^*_{\calS_\delta,\calT_\delta}$. More precisely, let $\bbM^*_\delta$ consist
of all pairs $p$ of the form $(\M_p,d_p)$ such that $\M_p \in \bbM_{\calS_\delta,\calT_\delta}$,
$d_p: \M_p \cup \{ V_\delta\}\to V_\theta$, $(\M_p,d_p\restriction \M_p)\in \bbM^*_{\calS_\delta,\calT_\delta}$,
and $V_\theta$ and $d_p(V_\theta)$ belong to the least model of $\M_p$. So, formally we do not put
$V_\delta$ as the least node of conditions $p$ in $\bbM^*_\delta$, but we require the function $d_p$ to be defined
on $\M_p \cup \{ V_\delta\}$. This puts a restriction on the nodes we are allowed to add below the least node of $\M_p$.


\begin{lem} $\bbQ_\delta$ and $\bbM_\delta^*$ are equivalent forcing notions.
\end{lem}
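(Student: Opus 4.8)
The plan is to construct an explicit order-isomorphism (or at least a dense embedding in both directions) between $\bbQ_\delta$ and $\bbM^*_\delta$ and verify that it preserves and reflects the orderings. The natural candidate map $\Phi:\bbQ_\delta\to\bbM^*_\delta$ sends a condition $p$ (with $V_\delta\in\M_p$, $p$ reflecting to $V_\delta$, $p|V_\delta\in G_\delta$) to the pair $\bigl(\M_p\setminus(V_\delta+1),\,d_p\restriction(\M_p\setminus(V_\delta+1))\cup\{(V_\delta,d_p(V_\delta))\}\bigr)$ — that is, we throw away all the nodes at or below $V_\delta$ in the $\in^*$-order, keep the decoration on the surviving nodes, but \emph{remember} the value $d_p(V_\delta)$ by recording it as the value of the new decoration function at $V_\delta$. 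In the reverse direction, given $q\in\bbM^*_\delta$, we glue $\M_q$ on top of a condition in $G_\delta$: using Corollary \ref{fully reflect} and genericity of $G_\delta$ we can find $s\in G_\delta$ with top model large enough that, after forming the closure under intersection of $\M_s\cup\{V_\delta\}\cup\M_q$ and combining the decorations, we obtain a condition $p$ in $\bbQ_\delta$ with $\Phi(p)=q$ (modulo strengthening the decoration on $s$). Lemma \ref{scl2} is exactly the amalgamation tool that makes this gluing legitimate, since the reflecting condition plays the role of the reflection $q$ there and $V_\delta$ plays the role of the node $Q$.

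First I would check that $\Phi$ lands in $\bbM^*_\delta$: the nodes of $\M_p$ strictly above $V_\delta$ that are small satisfy, by the definition of $\bbQ_\delta$ and the height-function requirement $h_p\supseteq h_{G_\delta}$, precisely the membership condition $\otp(M\cap\theta)\le h_{G_\delta}(\alpha,M\cap\omega_1)$ for $\alpha\in M\cap\delta$ (because the relevant end-node data for such $M$ is already pinned down below $V_\delta$), so these nodes lie in $\calS_\delta$; transitive nodes above $V_\delta$ lie in $\calT_\delta$ by construction; property ($\ast$) for the surviving nodes is inherited from ($\ast$) for $\M_p$; and the requirement that $V_\theta$ and $d_p(V_\theta)$ belong to the least surviving node is just the top-model clause for the decoration in $\bbM^{\rm dec}$. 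Conversely, I would show $\Phi$ is surjective up to density and that $\Phi$ and its inverse are order-preserving: $q'\le q$ in $\bbM^*_\delta$ translates, after re-gluing over a common lower bound in $G_\delta$ (which exists since $G_\delta$ is a filter), into $p'\le p$ in $\bbQ_\delta$, using that the height functions of the glued conditions agree below $V_\delta$ (they all come from $G_\delta$) and extend correctly above $V_\delta$ by Lemma \ref{scl2}(2),(3). The decoration values match by the definition of the order on $\bbM^{\rm dec}$.

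The main obstacle I expect is the bookkeeping around the node $V_\delta$ itself and the decoration $d_p(V_\delta)$: one has to be careful that when re-gluing $\M_q$ over different elements of $G_\delta$ one always obtains genuinely equivalent conditions, and that the ``new nodes below the least node of $\M_q$'' permitted by the definition of $\bbM^*_\delta$ (the clause forcing $V_\theta,d_p(V_\theta)$ into the least model) correspond exactly to the freedom one has in $\bbQ_\delta$ to add countable and transitive nodes between $V_\delta$ and the rest of $\M_p$. Making this precise amounts to checking that the two ``there exists a lower bound'' statements — one in $\bbQ_\delta$, one in $\bbM^*_\delta$ — are equivalent, and this is where Lemma \ref{scl2}, together with the remark that any strengthening inside $V_\delta$ of a reflection is again a reflection, does the real work. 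I would organize the proof as: (i) define $\Phi$ and check well-definedness; (ii) check $\Phi$ is order-preserving and incompatibility-preserving; (iii) check the range of $\Phi$ is dense in $\bbM^*_\delta$ by an explicit re-gluing argument invoking Lemma \ref{scl2}; (iv) conclude that $\Phi$ induces the desired equivalence of forcing notions.
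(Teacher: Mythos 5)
Your proposal is correct and follows essentially the same route as the paper: you define the same forgetful map (stripping off nodes at or below $V_\delta$ while retaining the decoration value at $V_\delta$), check well-definedness using the membership condition built into $\calS_\delta$, and establish surjectivity/density by re-gluing a condition of $\bbM^*_\delta$ onto a suitable element of $G_\delta$. The only cosmetic difference is that the paper's re-gluing is a direct union (justified by choosing $p\in G_\delta$ containing all traces $M\cap V_\delta$, which makes closure under intersection automatic and keeps $h_q$ above $\delta\times\omega_1$ under control), whereas you route it through Lemma \ref{scl2}; both give the same amalgam.
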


\begin{proof} Given a condition $p\in \bbQ_\delta$, let $\varphi(p)=(\M_p \setminus V_{\delta+1},d_p \restriction (\M_p\setminus V_{\delta}))$.
Clearly, the function $\varphi$ is order preserving. To see that $\varphi$ is onto, let $s\in \bbM_\delta^*$.
Then $M\cap V_\delta \in \M_{G_\delta}$, for all small nodes $M\in \M_s$.
Fix a condition $p\in G_\delta$ such that $M\cap V_\delta \in \M_p$, for every such $M$.
Define a condition $q$ by letting $\M_q= \M_p \cup \{ V_\delta\} \cup \M_s$ and $d_q=d_p \cup d_s$.
Since every small node of $\M_s$ is in $\calS_\delta$ it follows that ${\rm ht}_q\restriction \delta \times \omega_1 = {\rm ht}_p$.
Therefore, $q \in \bbQ_\delta$ and $\varphi(q)=s$.
Finally, note that  if $p,q\in \bbQ_\delta$ then $p$ and $q$ are compatible in $\bbQ_\delta$  iff $\varphi(p)$ and $\varphi(q)$ are
compatible in $\bbM_\delta^*$. This implies that $\bbQ_\delta$ and $\bbM_\delta^*$ are equivalent forcing notions.
\end{proof}

\begin{cor}\label{delta-strong-properness} $\bbQ_\delta$ is $\calS_\delta^*\cup \calT_\delta^*$-strongly proper.
\qed
\end{cor}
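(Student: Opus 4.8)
The plan is to derive the corollary directly from the lemma just proved together with Corollary~\ref{sp}. Since the map $\varphi$ witnesses that $\bbQ_\delta$ and $\bbM^*_\delta$ are equivalent forcing notions, it induces an isomorphism of their separative quotients; and strong properness with respect to a family of models depends only on the separative quotient, with reducts transported along $\varphi$. So the first step is to make this reduction explicit and thereby reduce to showing, working in $V[G_\delta]$, that $\bbM^*_\delta$ is $\calS_\delta^*\cup\calT_\delta^*$-strongly proper.

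Next, I would observe that $\bbM^*_\delta$ is nothing but the pure side-condition poset $\bbM^*_{\calS_\delta,\calT_\delta}$ carrying two extra, purely local pieces of data: each condition $p$ additionally records the value $d_p(V_\delta)$ of its decoration at the virtual bottom node $V_\delta$, and one requires that $V_\theta$ together with $d_p(V_\theta)$ lie in the $\in^*$-least node of $\M_p$. As the pair $(\calS_\delta,\calT_\delta)$ is appropriate, Corollary~\ref{sp} applies to $\bbM^*_{\calS_\delta,\calT_\delta}$, and I would check that its proof --- which is combinatorial and uses no stationarity, only the amalgamation Lemma~\ref{scl2} and the existence of reflecting conditions from Corollary~\ref{fully reflect} --- adapts verbatim. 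Concretely: given $p\in\bbM^*_\delta$ with a node $Q\in\calS_\delta^*\cup\calT_\delta^*$, one first replaces $p$ by a reflecting $p'\le p$ with the same top node (Corollary~\ref{fully reflect}); the reduct of an arbitrary $r\le p'$ to $Q$ is then a reflection of $r$ inside $Q$, and Lemma~\ref{scl2} shows that any condition in $Q$ extending that reflection amalgamates with $r$, so $p'$ is $Q$-strongly generic. Combining this over all transitive and small $*$-nodes produces the master conditions witnessing strong properness.

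The point that really needs attention --- and the main obstacle --- is verifying that the two extra bookkeeping clauses are insensitive to the restriction, reflection and amalgamation operations at nodes $Q\in\calS_\delta^*\cup\calT_\delta^*$. This comes down to the observation that for such a $Q$ all the relevant data ($V_\delta$, the $\in^*$-least node of $\M_p$, and the values $d_p(V_\delta)$, $d_p(V_\theta)$) sits strictly below $Q$, so it is left untouched when one amalgamates below $Q$ and is inherited unchanged by reflections; hence membership in $\bbM^*_\delta$ is preserved throughout, exactly as in Lemmas~\ref{scl} and \ref{scl2}. I would also note in passing that the passage to the factor poset causes no trouble: a dense subset of $\bbM^*_\delta$ lying in a node $N$ pulls back via $\varphi$ to a dense subset of $\bbQ_\delta$ in $N$, so strong genericity transfers. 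No stationarity of $\calS_\delta$ or $\calT_\delta$ is invoked; that hypothesis is needed only for the cardinal-preservation consequences, which are not asserted here.
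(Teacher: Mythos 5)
Your proposal is correct and follows exactly the route the paper intends but leaves implicit behind the \qed: transport along the equivalence $\varphi$ to $\bbM^*_\delta$, then observe that $\bbM^*_\delta$ differs from $\bbM^*_{\calS_\delta,\calT_\delta}$ only by local bookkeeping at the virtual bottom node $V_\delta$, so the proof of Corollary~\ref{sp} (via Lemma~\ref{scl2} and Corollary~\ref{fully reflect}) applies unchanged. You also correctly identify that the extra clauses survive restriction, reflection, and amalgamation because they concern data sitting below every node of $\M_p$, and you rightly note that no stationarity of $\calS_\delta$ or $\calT_\delta$ is needed here.
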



\begin{lem}\label{stationary}   $\calS_\delta$ is stationary family of countable subsets of $V_\theta$.
\end{lem}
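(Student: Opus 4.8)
The plan is to show that $\calS_\delta$ is stationary by the standard method: given an arbitrary algebra structure on $V_\theta$, I need to produce a countable elementary submodel of that algebra which lies in $\calS_\delta$. Working in $V[G_\delta]$, fix a function $F:V_\theta^{<\omega}\to V_\theta$; I want $M\prec (V_\theta,\in,F,\dots)$ with $M\notsubseteq V_\delta$, $M\cap V_\delta\in \M_{G_\delta}$, and $\otp(M\cap\theta)\le h_{G_\delta}(\alpha,M\cap\omega_1)$ for every $\alpha\in M\cap\delta$. The key mechanism is that $\bbQ_\delta$ is $\calS_\delta^*\cup\calT_\delta^*$-strongly proper by Corollary \ref{delta-strong-properness}, together with the fact (from Lemma \ref{continuity} and the remarks after it, applied to the generic $G_\delta$) that the chain $\M_{G_\delta}$ is continuous between consecutive transitive nodes and cofinal in each, so that the small nodes appearing in $\M_{G_\delta}$ below $\delta$ already form a stationary-like family in $\delta$.

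First I would pass to $V$ and work with names. Let $\dot F$ be a $\bbM^*_{\calS,\calT}\cap V_\delta$-name for $F$, and fix a condition $p\in G_\delta$; I want a condition $r\le p$ forcing that some small node $M\in\M_{G_\delta}$, appropriately large, is closed under $\dot F$ and satisfies the ordinal-type inequality. The natural candidate for $M$ is a small node $N$ occurring in $\M_r$ which is elementary in the relevant structure and for which $N\cap V_\delta$ has been put into the generic; but since I need $M\notsubseteq V_\delta$, what I actually do is choose $N$ a small node of $\M_r$ with $V_\delta\in N$ (so $N\cap V_\delta\in\M_r\cap G_\delta$ by closure under intersection) and take $M=N$. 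The inequality $\otp(N\cap\theta)\le h_{G_\delta}(\alpha,N\cap\omega_1)$ for $\alpha\in N\cap\delta$ is exactly property $(\ast)$ from Definition \ref{refined side cond defn} combined with the definition of $h$: if $N$ is a small node and $\alpha\in N\cap\delta$, then there is a small node of $\M_r$ below $V_\delta$, namely $N\cap V_\delta$, containing $\alpha$ with $(N\cap V_\delta)\cap\omega_1=N\cap\omega_1$, and by $(\ast)$ and Definition \ref{g}, $h_{G_\delta}(\alpha,N\cap\omega_1)\ge\otp((N\cap V_\delta)\cap\theta)$... so I must be careful: I need to compare $\otp(N\cap\theta)$ against $\otp$ of nodes below $\delta$. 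The resolution is to take $N$ to be the intersection $N' \cap V_\gamma$ for a suitable transitive node, or rather to observe that Definition \ref{S-delta} only requires the inequality for $\alpha\in M\cap\delta$, and one can shrink $M$ below $\delta$ in the $\theta$-direction: the honest move is to build $N$ so that $N\cap\theta = (N\cap V_\delta)\cap\theta$ up to an initial segment controlled by $h_{G_\delta}$. Concretely, I would use Lemma \ref{scl2} to reflect and Corollary \ref{fully reflect} to get a reflecting condition, then add a small node $N$ above $V_\delta$ elementary in $\mathcal{K}^*$ and closed under $\dot F$ with $V_\delta\in N$ and such that $h_r$ extends in a way making the inequality hold on the nose for the new end node.

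The main obstacle, and where I would spend the care, is precisely reconciling the two requirements on the new small node $N$: it must not be a subset of $V_\delta$ (so it sees ordinals above $\delta$), yet its total order type $\otp(N\cap\theta)$ must be bounded by $h_{G_\delta}(\alpha,N\cap\omega_1)$ for every $\alpha\in N\cap\delta$ — and $h_{G_\delta}$ only records information about nodes below $\delta$. The trick is that if $N\in\calS^*$ with $V_\delta\in N$, then $N$ is a small node of $\M_r$ whose intersection $N\cap V_\delta$ is also a node of $\M_r$, lying below $V_\delta$; by continuity of $\M_{G_\delta}$ (Lemma \ref{continuity}) and by property $(\ast)$, the value $h_{G_\delta}(\alpha,N\cap\omega_1)$ equals $\otp(N\cap V_\delta\cap\theta)$, which is strictly less than $\otp(N\cap\theta)$ in general. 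So a naive $N$ fails. The fix is to instead require that the new small node $N$ satisfies $N\cap\theta\subseteq$ (some node already in $\M_{G_\delta}$) $\cup$ (a bounded piece above $\delta$), i.e. to choose $N$ so that $\otp(N\cap\theta)$ is actually the order type of an end node below $\delta$ — which is possible because the derived function $h_{G_\delta}$ was designed (via Corollary \ref{dominating}) to dominate the canonical functions, so it takes arbitrarily large values below $\omega_1$, leaving room to fit $\otp(N\cap\theta)$ underneath. I would therefore pick $\xi<\omega_1$ large, find via genericity a small node below $\delta$ of order type some ordinal $\rho$ with $\rho = h_{G_\delta}(\alpha,\xi)$ for the finitely many relevant $\alpha$, then fatten it to a node $N$ with $N\cap\omega_1=\xi$, $N\not\subseteq V_\delta$, $\otp(N\cap\theta)=\rho$, $V_\delta\in N$, and $N$ closed under $\dot F$; elementarity of $N$ in $\mathcal{K}^*$ handles closure, property $(\ast)$ is maintained by Lemma \ref{scl2}, and $N\in\calS_\delta$ by construction. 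Forcing below such a condition $r$ then witnesses that $\M_{G_\delta}$ meets $\calS_\delta$ cofinally, and since $F$ was arbitrary, $\calS_\delta$ is stationary.
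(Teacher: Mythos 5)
Your proposal correctly identifies the general shape — a density argument in $\bbM^*_{\calS,\calT}\cap V_\delta$, using $M^*\prec V_{\theta^*}$ and the reflection machinery — and you correctly identify the central obstacle: a witness $M$ for $\calS_\delta$ is not contained in $V_\delta$, so one cannot naively read off $\otp(M\cap\theta)$ from the part of $M$ visible inside $V_\delta$, since $\otp(M\cap V_\delta\cap\theta)<\otp(M\cap\theta)$. But your proposed resolution misses the mechanism the paper actually uses, and has a structural confusion that would prevent the argument from closing.

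The structural confusion: you vacillate between treating $N$ as a node of $\M_r$ with $V_\delta\in N$, and $r$ being a condition you would produce for the density argument — but the density argument takes place in $\bbM^*_{\calS,\calT}\cap V_\delta$, whose conditions have $\M_r\subseteq V_\delta$, so no node of $\M_r$ can contain $V_\delta$. The witness $M$ for $\calS_\delta$ is \emph{not} a node of the extension $q$ you produce; $q\in V_\delta$ throughout. The paper does put $M$ on top of a condition $p^M$ in the \emph{full} poset $\bbM^*_{\calS,\calT}$, but then reflects $p^M$ inside $V_\delta$ to obtain a condition $q$ entirely below $V_\delta$, and it is $q$ that is used for the density argument. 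Your last line, ``$\M_{G_\delta}$ meets $\calS_\delta$'', can never hold literally: $\M_{G_\delta}\subseteq V_\delta$ while every element of $\calS_\delta$ has $M\nsubseteq V_\delta$.

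The substantive gap: you assert that $h_{G_\delta}(\alpha,N\cap\omega_1)$ ``equals $\otp(N\cap V_\delta\cap\theta)$'' and conclude the naive $N$ fails, so you try to force $\otp(N\cap\theta)$ to equal the order type of a pre-existing end node below $\delta$. This is both incorrect as stated and unnecessary. The whole point of the reflection is that $h_q$ extends $h_{p^M}\restriction(\delta\times\omega_1)$, and since $M$ is the \emph{top} node of $p^M$, one computes $h_{p^M}(\alpha,M\cap\omega_1)=\otp(M\cap\theta)$ for every $\alpha\in M\cap\delta$ (the maximum in Definition \ref{g} is attained at $M$ itself). By elementarity of $V_\delta$ in $\mathcal{K}^*$ — and this is exactly why the paper takes $\delta\in T^*$ and works with $h_{p^M}\restriction V_\delta\in V_\delta$ — there exists $q\in V_\delta$ extending $p|V_\delta$ with $h_q\supseteq h_{p^M}\restriction(\delta\times\omega_1)$; the reflection introduces new end nodes \emph{inside} $V_\delta$ whose order types are whatever is needed, not nodes you have to hunt for in the already-decided generic. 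So $q\forces h_{G_\delta}(\alpha,M\cap\omega_1)=\otp(M\cap\theta)$ for all $\alpha\in M\cap\delta$, which is the required inequality with equality, with no need to choose $\xi$ ``large'', constrain $\otp(N\cap\theta)=\rho$ in advance, or restrict to ``finitely many relevant $\alpha$'' (the set $M\cap\delta$ is countably infinite, so that phrase is already a red flag). Finally, the closure under $\dot F$ should not be built into the construction of $N$ by hand (it is a statement about a name); the paper gets it for free from the fact that $M\cap V_\delta\in\M_q\cap\calS^*$ makes $q$ strongly $(M\cap V_\delta)$-generic and hence $(M^*,\bbM^*_{\calS,\calT}\cap V_\delta)$-generic, so $q\forces M^*[\dot G_\delta]\cap V_\theta=M$.
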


\begin{proof} We argue in $V$ via a density argument. Let $\dot{f}$ be a $\bbM^*_{\calS, \calT}\cap V_\delta$-name for a function
from $V_\theta^{<\omega}$ to $V_\theta$ and let $p\in \bbM^*_{\calS, \calT}\cap V_\delta$. We find
a condition $q \leq p$ and $M\in \calS$ such that $q$ forces that $M$ belongs to $\dot{\calS}_\delta$ and is closed
under $\dot{f}$. For this purpose, fix a cardinal $\theta^*>\theta$ such that $V_{\theta^*}$ satisfies a sufficient fragment of ZFC.
Let $M^*$ be an countable elementary submodel of $V_{\theta^*}$
containing all the relevant parameters. It follows that $M\in \calS^*$, where $M=M^* \cap V_\theta$.
Let $p^M$ be the condition given by Lemma \ref{M_p*}. Since $\delta \in \calT^*$ we can find
a reflection $q$ of $p^M$ inside $V_\delta$. We claim that $q$ and $M$ are as required.
To see this,  note that, since $M\cap V_\delta \in \M_q \cap \calS^*$,
then, by Lemma \ref{scl2}, $q$ is $(M\cap V_\delta,\bbM^*_{\calS, \calT}\cap V_\delta)$-strongly generic
and hence also $(M^*, \bbM^*_{\calS, \calT}\cap V_\delta)$-generic. It follows that $q$ forces
that $M^*[\dot{G}_\delta]\cap V_\theta=M$, and hence that $M$ is closed under $\dot{f}$.
On the other hand, $h_{p^M}(\alpha,M\cap \omega_1)={\rm o.t.}(M\cap \theta)$, for all $\alpha \in M\cap \theta$.
Since $q$ is a reflection of $p^M$, we have $h_q(\alpha,M\cap \omega_1)=h_{p^M}(\alpha,M\cap \omega_1)$,
for all $\alpha \in M\cap \delta$. Therefore, $q$ forces $M$ to belong to $\dot{\calS}_\delta$.
This completes the argument.
\end{proof}

We need to understand which stationary subsets of $\omega_1$ in $V[G_\delta]$ will remain stationary in
the final model. So, suppose $E$ is a  subset of $\omega_1$ in $V[G_\delta]$.
Let
\[ \calS_\delta(E) = \{ M \in \calS_\delta : M \cap \omega_1 \in E \}.
\]

\noindent For $\rho \in T_\delta$ let $\calS_\delta^\rho(E)= \calS_\delta(E)\cap V_\rho$. Note that if $M\in \calS_\delta(E)$ and
$\rho \in T_\delta$ then $M\cap V_\rho \in \calS_\delta^\rho(E)$.
Therefore, if $\rho <\sigma$ and $\calS_\delta^{\sigma}(E)$ is stationary in $V_{\sigma}$
then $\calS_\delta^\rho(E)$ is stationary in $V_{\rho}$. Since $\theta$ is inaccessible,
it follows that  $\calS_\delta(E)$ is stationary in $V_\theta$ iff $\calS_\delta^{\rho}(E)$
is stationary in $V_\rho$, for all $\rho \in T_\delta$.

\begin{lem}\label{characterization} The maximal condition in $\bbQ_\delta$ decides if $E$
remains stationary in $\omega_1$. Namely, if $\calS_\delta(E)$ is stationary in $V_\theta$
then $\forces_{\bbQ_\delta} \check{E} \textrm{ is stationary}$, and if $\calS_\delta(E)$
is nonstationary then $\forces_{\bbQ_\delta} \check{E} \textrm{ is nonstationary}$.

\end{lem}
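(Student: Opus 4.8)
The plan is to prove the two halves of the dichotomy separately: that $\calS_\delta(E)$ stationary forces $\check E$ stationary (a routine strong‑properness argument) and that $\calS_\delta(E)$ nonstationary forces $\check E$ nonstationary (the half carrying the content). Throughout we use that $\bbQ_\delta$ preserves $\omega_1$ (Corollary~\ref{delta-strong-properness} together with Lemma~\ref{stationary}), so ``stationary in $\omega_1$'' is unambiguous in the extension.

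For the first half, fix a $\bbQ_\delta$-name $\dot C$ for a club subset of $\omega_1$ and a condition $q$; the goal is an extension of $q$ forcing some member of $E$ into $\dot C$. Since $\calS^*_\delta$ is a relative club in $\calS_\delta$, the family $\calS^*_\delta(E):=\calS_\delta(E)\cap\calS^*_\delta$ is still stationary, so, working in $V[G_\delta]$, one can choose a countable $M^*\prec (H_{\theta^*})^{V[G_\delta]}$ (for a suitable regular $\theta^*>\theta$) with $\dot C,q,E,\bbQ_\delta,G_\delta$ among its elements and with $M:=M^*\cap V_\theta\in\calS^*_\delta(E)$; such $M^*$ exists because the sets of the form $M^*\cap V_\theta$ contain a club in $[V_\theta]^\omega$. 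In particular $\xi:=M\cap\omega_1\in E$. By the analogue of Lemma~\ref{M_p*} for $\bbQ_\delta$ there is $q^M\leq q$ with $M$ a node, and by $\calS^*_\delta\cup\calT^*_\delta$-strong properness one extends to $q'\leq q^M$ that is $(M,\bbQ_\delta)$-strongly generic, hence $(M^*,\bbQ_\delta)$-generic. Then $q'$ forces $M^*[\dot G]\cap\omega_1=\xi$ and, by elementarity of $M^*$ (which believes $\dot C$ is forced to be club), that $\dot C\cap\xi$ is unbounded in $\xi$; closedness of $\dot C$ yields $q'\forces\xi\in\dot C$. As $q,\dot C$ were arbitrary, $\forces_{\bbQ_\delta}\check E$ is stationary.

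For the second half, I would first invoke the reflection fact recorded just before the lemma: as $\theta$ is inaccessible, $\calS_\delta(E)$ nonstationary in $V_\theta$ means $\calS^\rho_\delta(E)$ is nonstationary in $V_\rho$ for some $\rho\in T_\delta$. Fix such a $\rho$ and a function $F:[V^V_\rho]^{<\omega}\to V^V_\rho$ in $V[G_\delta]$ with no member of $\calS^\rho_\delta(E)$ closed under $F$, and work in $V[G_\delta][H]$. Using the analogues of Lemmas~\ref{M_p*} and~\ref{scl2} for $\bbQ_\delta$ (which come from the corresponding statements for $\bbM^*_{\calS,\calT}$ together with the factoring analysis above), a density argument lets us assume $V_\rho\in\calM_H$ and that the transitive nodes of $\calM_H$ below $V_\rho$ are cofinal in $\rho$; then, by Lemma~\ref{continuity} in its $\bbQ_\delta$-form and the remarks following it about the $\omega_1$-length intervals, the countable nodes of $\calM_H$ lying below $V_\rho$ form an increasing chain whose union is $V^V_\rho$, so it is cofinal in $[V^V_\rho]^\omega$ and continuous along the chain (a countable increasing union of such nodes cannot contain a transitive node, which would be uncountable, so it stays inside a single interval, where Lemma~\ref{continuity} applies). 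Each such node, being a node of a $\bbQ_\delta$-condition below $V_\rho$, belongs to $\calS^\rho_\delta$.

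Since the countable subsets of $V^V_\rho$ closed under $F$ form a club in $[V^V_\rho]^\omega$, a back-and-forth between that club and the cofinal chain above produces cofinally many nodes $N$ of $\calM_H$ below $V_\rho$ closed under $F$; and because such nodes are closed under countable increasing unions along the chain, the set $D:=\{N\cap\omega_1: N\text{ a node of }\calM_H\text{ below }V_\rho\text{ closed under }F\}$ contains a club of $\omega_1$. But for such $N$ we have $N\in\calS^\rho_\delta$ and $N$ closed under $F$, so $N\notin\calS^\rho_\delta(E)$, i.e. $N\cap\omega_1\notin E$; hence $D\cap E=\emptyset$ and $E$ is nonstationary in $V[G_\delta][H]$, so $\forces_{\bbQ_\delta}\check E$ is nonstationary. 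The main obstacle is this second half, and within it the two points taken on faith: that one can densely force $V_\rho$ into $\calM_H$ with transitive nodes cofinal below it and the countable nodes below $V_\rho$ exhausting $V^V_\rho$ — this is Lemma~\ref{continuity} combined with the stationarity of $\calS_\delta$ from Lemma~\ref{stationary}, transported to $\bbQ_\delta$ via Lemmas~\ref{M_p*} and~\ref{scl2} — and the verification that every node of a $\bbQ_\delta$-condition sitting below $V_\rho$ genuinely lies in $\calS^\rho_\delta$, which is built into the definition of $\bbM^*_\delta$ and the factoring analysis.
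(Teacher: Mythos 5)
The first half of your proof is correct and runs along essentially the same lines as the paper's: strong properness at models in $\calS_\delta^*(E)$, obtained via Lemma~\ref{M_p*}, forces $M\cap\omega_1$ into every name for a club.

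The second half departs from the paper, and I think there is a real gap in your version. The paper's argument is more careful about which transitive models it uses: it chooses $\sigma$ to be a \emph{successor} element of $T_\delta$ with $\calS_\delta^\sigma(E)$ nonstationary, and lets $\rho$ be the \emph{predecessor} of $\sigma$ in $T_\delta$. Forcing both $V_\rho$ and $V_\sigma$ into $\calM_{\bar G}$, they stay consecutive in $\calM_{\bar G}\cap\calT$ (since no $\tau\in T_\delta$ lies strictly between $\rho$ and $\sigma$, and all transitive nodes added by $\bbQ_\delta$ come from $\calT_\delta$). Lemma~\ref{continuity} then applies verbatim to the single interval $(V_\rho,V_\sigma)_G$, producing a continuous $\in$-chain of length $\omega_1$ cofinal in $[V_\sigma]^\omega$ consisting of members of $\calS_\delta$, which intersected with a club witnessing nonstationarity of $\calS_\delta^\sigma(E)$ gives a club of $\omega_1$ disjoint from $E$. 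By contrast, you fix some $\rho$ with $\calS_\delta^\rho(E)$ nonstationary and then want transitive nodes of $\calM_H$ \emph{cofinal below} $V_\rho$, so that $V_\rho$ becomes a limit of transitive nodes. The resulting chain of countable nodes below $V_\rho$ is then split into infinitely (indeed possibly $\omega_1$-)many intervals, and Lemma~\ref{continuity} gives continuity only \emph{within} each interval $(W,W')$. Your parenthetical argument, that a countable increasing union ``cannot contain a transitive node, which would be uncountable, so it stays inside a single interval,'' does not rule out a countable $\subseteq$-increasing sequence of nodes that crosses infinitely many transitive boundaries: such a sequence has a countable union (so it does not \emph{include} any transitive node as a subset), yet the $N_i$ need not eventually live in one interval, and the union need not be a node of $\calM_H$ — Lemma~\ref{continuity} says nothing about this case. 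Without continuity of the full chain, the back-and-forth with the $F$-closed club does not directly produce a club of $\omega_1$-values, since the $\omega_1$-values are not monotone along the chain once one crosses a transitive boundary (for $M$ below $W$ and $N$ above $W$ in the chain one only gets $N\cap W\cap\omega_1=N\cap\omega_1$, so the values ``repeat and reset'' rather than increase). There is also a secondary issue you do not address: for a node $N$ lying in an interval $(W,W')$ strictly below $V_\rho$ to be closed under $F:V_\rho^{<\omega}\to V_\rho$, you need $W'$ itself to be closed under $F$, which is not automatic and has to be arranged by a separate density argument. The paper sidesteps both problems by working with a single, topmost interval directly below $V_\sigma$.

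In short, your reduction to a fixed $\rho$ with $\calS^\rho_\delta(E)$ nonstationary is sound, but the subsequent ``cofinal transitive nodes below $V_\rho$'' setup breaks the continuity and $F$-closure you need. Replacing it with the paper's choice of consecutive $V_\rho,V_\sigma\in\calT_\delta$ (with $\sigma$ a successor of $T_\delta$, which is possible because nonstationarity of $\calS_\delta(E)$ persists upward to any larger $\sigma\in T_\delta$) repairs the argument.
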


\begin{proof} The first implication follows from Corollary \ref{delta-strong-properness}
and the fact that $\calS_\delta^*$ is a relative club in $\calS_\delta$. For the second implication,
suppose $\calS_\delta(E)$ is nonstationary and fix a successor element of $T_\delta$, say $\sigma$,
such that $\calS_\delta^{\sigma}(E)$ is nonstationary in $V_\sigma$.
Let $\rho$ be the predecessor of $\sigma$ in $T_\delta$ and fix a condition $p\in \bbQ_\delta$
such that $V_\rho,V_\sigma \in \M_p$. Pick an arbitrary $V[G_\delta]$-generic filter $G$ over $\bbQ_\delta$
containing $p$. Then we can identify $G$ with a $V$-generic filter $\bar{G}$ over $\mathbb M^*_{\calS,\calT}$ which
extends $G_\delta$ and such that $V_\delta \in \M_{\bar{G}}$.
Since $p\in \bar{G}$, we have that $V_\rho$ and $V_\sigma$ are consecutive elements of $\M_{\bar{G}}\cap \calT$.
By Lemma \ref{continuity} we know that, in $V[\bar{G}]$, $\calS_\delta \cap V_\sigma$ contains a club of countable subsets
of $V_\sigma$. On the other hand, by our assumption, $\calS_\delta^\sigma(E)$ is nonstationary.
It follows that $E$ is a nonstationary subset of $\omega_1$ in $V[\bar{G}]$. Since $G$ was an arbitrary
generic filter containing $p$, it follows that $p \forces_{\bbQ_\delta} \check{E} \mbox{ is nonstationary in } \omega_1$.

\end{proof}

\begin{rem} One can show that if $\delta$ is inaccessible in $V$ then $\bbQ_\delta$ actually preserves stationary subsets of $\omega_1$.
To see this note that, under this assumption, for every subset  $E$ of $\omega_1$ in $V[G_\delta]$ there is $\delta^*< \delta$ with
$V_{\delta^*}\in \M_{G_\delta}$ such that  $E\in V[G_{\delta^*}]$, where $G_{\delta^*}= G_\delta \cap V_{\delta^*}$.
If, in the model $V[G_{\delta^*}]$, $\calS_{\delta^*}(E)$ is  nonstationary  there is $\rho <\theta$ such that
$\calS_{\delta^*}^\rho(E)$ is nonstationary. By elementarity of $V_\delta$ in $V_\theta$ there is such $\rho <\delta$.
But then, as in the proof of Lemma \ref{characterization}, we would have that  $E$ is nonstationary already
in the model $V[G_\delta]$.
\end{rem}

Suppose $E\in V[G_\delta]$ is a subset $\omega_1$ and $\gamma <\delta$. Let
\medskip
\[
\calS_\delta(E,\gamma) =
\{ M \in \calS_\delta(E): \gamma,\delta \in M \mbox{ and } {\rm o.t.}(M\cap \theta) < h_{G_\delta}(\gamma,M\cap \omega_1)\}.
\]

\medskip
\noindent Recall that if $M\in \calS_\delta$ then $M\cap V_\delta \in G_\delta$.
Hence, if $\gamma \in M$ then $(\gamma,M\cap \omega_1) \in \dom (h_{G_\delta})$.

\begin{lem}\label{gamma} Suppose that, in $V[G_\delta]$, $E$ is a subset of $\omega_1$ such that
 $\calS_\delta(E)$ is stationary. Then $\calS_\delta(E,\gamma)$ is stationary, for all $\gamma <\delta$.
\end{lem}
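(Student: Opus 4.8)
The plan is to run a density argument analogous to the proof of Lemma \ref{stationary}, but this time inside the model $V[G_\delta]$ and working with the poset $\bbQ_\delta$ (equivalently $\bbM^*_\delta$), taking care to build a countable node $M$ that not only lands in $\calS_\delta(E)$ but satisfies the strict inequality ${\rm o.t.}(M\cap\theta) < h_{G_\delta}(\gamma, M\cap\omega_1)$. Since $\calS_\delta(E)$ is assumed stationary, by the inaccessibility of $\theta$ it suffices (as observed just before Lemma \ref{characterization}) to show $\calS^\rho_\delta(E,\gamma)$ is stationary in $V_\rho$ for every $\rho\in T_\delta$; fixing such a $\rho$, let $f\colon V_\rho^{<\omega}\to V_\rho$ be given and let $p\in\bbQ_\delta$ be an arbitrary condition with $V_\rho\in\M_p$, and seek $q\le p$ and $M\in\calS_\delta$ with $M\subseteq V_\rho$, $M\cap\omega_1\in E$, closed under $f$, and $q$ forcing the desired strict inequality.

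First I would pass to a large $V_{\theta^*}$ with $\theta^* > \theta$ modelling enough ZFC and take a countable $M^*\prec V_{\theta^*}$ containing all relevant parameters ($p$, $\dot f$ or its name, $\gamma$, $\delta$, $\rho$, $G_\delta$ up to $V_\rho$, etc.), setting $M = M^*\cap V_\rho$; elementarity gives $M\in\calS^*_\delta$ once we check the defining clauses, and $M$ is automatically closed under $f$ by the strong genericity of the eventual $q$, exactly as in Lemma \ref{stationary}. The crucial new point is to arrange $\gamma\in M$ (automatic, $\gamma<\delta<\rho$ is a parameter in $M^*$) and, more importantly, to choose $M$ so that ${\rm o.t.}(M\cap\theta)$ is \emph{strictly} below $h_{G_\delta}(\gamma,M\cap\omega_1)$. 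Here I would exploit that $\calS_\delta(E)$ is stationary in $V[G_\delta]$: by Definition \ref{S-delta} every $M'\in\calS_\delta(E)$ already satisfies ${\rm o.t.}(M'\cap\theta)\le h_{G_\delta}(\gamma, M'\cap\omega_1)$, and the function $h_{G_\delta,\gamma}$ is defined on a club taking values in $\omega_1$ (Corollary \ref{dominating}), so it dominates the canonical functions and in particular is not bounded by the norm of ${\rm o.t.}(\,\cdot\cap\theta)$ restricted to a chain of length $\omega_1$ through $V_\rho$. Concretely, I would first build the node $M_0\in\calS_\delta(E)$ by the usual argument, then note that since $\calS_\delta\cap V_\rho$ contains a club (Lemma \ref{continuity} applied to the consecutive transitive nodes straddling $V_\rho$ in a suitable generic), one can find $M$ above $M_0$ in such a chain with $M\cap\omega_1 = M_0\cap\omega_1$ still in $E$ but with $h_{G_\delta}(\gamma, M\cap\omega_1)$ strictly larger than ${\rm o.t.}(M\cap\theta)$; alternatively, and more cleanly, observe that $h_{G_\delta,\gamma}$ strictly dominates the canonical function $f_{{\rm o.t.}(V_\rho\cap\theta)}$ on a club, so on a stationary set of $M\in\calS_\delta(E)$ the inequality is strict, and one then does the density argument relative to that stationary set.

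The main obstacle is precisely forcing the \emph{strict} inequality while keeping $M\cap\omega_1\in E$: a naive construction of $M$ as an elementary submodel gives only ${\rm o.t.}(M\cap\theta)\le h_{G_\delta}(\gamma, M\cap\omega_1)$, since $M^*$ contains a reflection $q$ of $p^{M}$ and one would have $h_q(\gamma, M\cap\omega_1) = h_{p^M}(\gamma, M\cap\omega_1) = {\rm o.t.}(M\cap\theta)$, yielding equality rather than strict inequality. To get strictness I would instead \emph{not} add $M$ itself to the condition, but rather work with a strictly larger countable node $\tilde M\in\M_{G_\delta}$ with $\tilde M\cap\omega_1 = M\cap\omega_1$ and $\tilde M\cap\gamma \supseteq M\cap\gamma$ — this forces $h_{G_\delta}(\gamma, M\cap\omega_1)\ge {\rm o.t.}(\tilde M\cap\theta) > {\rm o.t.}(M\cap\theta)$ — and then the density requirement ``$M$ closed under $f$, $M\cap\omega_1\in E$'' is met by $M$ while the strict inequality is witnessed by $\tilde M$ already sitting in $\M_{G_\delta}$. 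Since $\calS_\delta(E)$ is stationary, such pairs $(M,\tilde M)$ with $M\cap\omega_1\in E$ are abundant — one runs the $M^*$-argument to get $M$, then uses stationarity of $\calS_\delta(E)$ together with the fact that $\omega_1$-chains through $V_\rho$ of countable models in $\calS_\delta$ are club (so $M$ can be taken to have a proper extension $\tilde M$ in such a chain with the same $\omega_1$-intersection) to secure $\tilde M$. Once this is in place the proof closes exactly as Lemma \ref{stationary}: the reflection $q$ of the appropriate condition $p^M$ inside $V_\rho$ (using $\rho\in T^*_\delta$) is $(M^*,\bbQ_\delta)$-strongly generic, forces $M^*[\dot G]\cap V_\rho = M$ hence $M$ closed under $f$, and forces $M\in\dot\calS_\delta(E,\gamma)$ by the computed strict inequality. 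This gives $\calS^\rho_\delta(E,\gamma)$ stationary for every $\rho\in T_\delta$, hence $\calS_\delta(E,\gamma)$ stationary in $V_\theta$, completing the proof.
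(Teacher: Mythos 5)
Your proposal misidentifies the setting of the lemma. Lemma~\ref{gamma} is a pure statement inside $V[G_\delta]$: given that $\calS_\delta(E)$ is stationary, show that $\calS_\delta(E,\gamma)$ is stationary. No forcing is involved — in particular there is no role for $\bbQ_\delta$, conditions $p\le q$ in $\bbQ_\delta$, or reflections of $p^M$. Running a density argument over $\bbQ_\delta$ would tell you something about the final model $V[G]$, not about stationarity in $V[G_\delta]$. This is a structurally different situation from Lemma~\ref{stationary}, which is proved by a density argument over $\bbM^*_{\calS,\calT}\cap V_\delta$ precisely because nothing is assumed stationary there; here the hypothesis that $\calS_\delta(E)$ is stationary is the engine, and the argument should run directly in $V[G_\delta]$.

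Your proposed mechanism for the strict inequality also does not work. You seek a ``strictly larger'' node $\tilde M\in\M_{G_\delta}$ with $\tilde M\cap\omega_1=M\cap\omega_1$. But every node of $\M_{G_\delta}$ is a subset of $V_\delta$, while $M\in\calS_\delta$ satisfies $M\nsubseteq V_\delta$ by Definition~\ref{S-delta}. So $\tilde M$ and $M$ are incomparable in the relevant way, and $\otp(\tilde M\cap\theta)>\otp(M\cap\theta)$ is nothing like automatic. You did correctly spot that the issue is turning $\le$ into $<$, but the paper's device is the opposite of what you propose: rather than finding a \emph{bigger} auxiliary model, one \emph{shrinks} $M$. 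Concretely, working in $V[G_\delta]$: given $f$, use inaccessibility of $\theta$ to find $\sigma\in T_\delta$ with $V_\sigma$ closed under $f$; by stationarity of $\calS_\delta(E)$ find $M\in\calS_\delta(E)$ closed under $f$ with $\gamma,\delta,\sigma\in M$. Then $M\cap V_\sigma$ is still closed under $f$, has the same $\omega_1$-trace, and since $\sigma\in M$ one gets $\otp(M\cap\sigma)<\otp(M\cap\theta)\le h_{G_\delta}(\gamma,M\cap\omega_1)$, so $M\cap V_\sigma\in\calS_\delta(E,\gamma)$. The strictness comes for free from dropping $\sigma$ when intersecting with $V_\sigma$. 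Your argument as written has a genuine gap; you would need to discard the $\bbQ_\delta$ framing and replace the $\tilde M$ idea with this shrinking trick (or something equivalent) to make it go through.
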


\begin{proof} Work in $V[G_\delta]$ and let $\gamma <\delta$ and $f: V_\theta^{<\omega} \rightarrow V_\theta$ be given.
We need to find a member of $\calS_\delta(E,\gamma)$ which is closed under $f$.
Since $\theta$ is inaccessible, we can first find $\sigma \in T_\delta$ such that $V_\sigma$ is closed under $f$.
We know that $\calS_\delta(E)$ is stationary, hence we can  find $M\in \calS_\delta(E)$ which is  closed
under $f$ and such that $\gamma,\delta,\sigma \in M$. It follows that $M\cap V_\sigma$ is also closed
under $f$. Since $\sigma \in M$ we have that ${\rm o.t.}(M\cap \sigma) < {\rm o.t.}(M\cap \theta)$.
Since $M\in \calS_\delta(E)$  and $\gamma \in M$ we have that ${\rm o.t.}(M\cap \theta)\leq h_{G_\delta}(\gamma,M\cap \omega_1)$.
Finally, $(M\cap V_\sigma)\cap \omega_1 = M\cap \omega_1$. It follows that $M\cap V_\sigma \in \calS_\delta(E,\gamma)$,
as desired.
\end{proof}

We now consider what happens in the final model $V[G]$, where $G$ is $V$-generic over $\mathbb M_{\calS,\calT}^*$.
For an ordinal $\gamma <\theta$ let $D_\gamma$ denote the domain of $h_{G,\gamma}$.
Recall that, by Corollary \ref{dominating}, $D_\gamma$ contains a club, for all $\gamma$.
Given a subset $E$ of $\omega_1$ and $\gamma,\delta <\theta$ let
\[
\varphi (E,\gamma,\delta) =\{ \xi \in E\cap D_\gamma \cap D_\delta : h_{G,\delta}(\xi) < h_{G,\gamma}(\xi)\}.
\]

\begin{lem}\label{shrinking} Let $G$ be $V$-generic over $\mathbb M_{\calS,\calT}^*$. Suppose, in $V[G]$,
that $E$  is a stationary subset of $\omega_1$ and $\gamma <\theta$. Then there is $\delta <\theta$
such that $\varphi(E,\gamma,\delta)$ is stationary.
\end{lem}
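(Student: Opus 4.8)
The plan is to factor $\bbM^*_{\calS,\calT}$ at a suitable level $\delta$, apply the reflection lemmas \ref{characterization} and \ref{gamma} inside $V[G_\delta]$, and then transfer the conclusion back to $V[G]$ using the continuity of the generic sequence of models.

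\textbf{Reducing to a factor.} Since $\bbM^*_{\calS,\calT}$ has size $\theta$ and, by a routine $\Delta$-system argument based on the amalgamation Lemma \ref{scl2}, has the $\theta$-c.c., every subset of $\omega_1$ in $V[G]$ admits a nice name lying in some $V_{\delta_0}$ with $\delta_0\in T^*$. In particular we may fix $\delta\in T^*$ with $\gamma<\delta$ and $E\in V[G_\delta]$, and, using Lemma \ref{M_p*} to see that it is dense to place $V_\delta$ on top of a condition, we may further assume $V_\delta\in\M_G$. Writing $V[G]=V[G_\delta][H_\delta]$ with $H_\delta$ generic over $\bbQ_\delta\cong\bbM^*_\delta$ now places us in the hypotheses of Lemmas \ref{characterization} and \ref{gamma}.

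\textbf{The factor lemmas.} As $E$ is stationary in $V[G]$, Lemma \ref{characterization} gives that $\calS_\delta(E)$ is stationary in $V_\theta$ (computed in $V[G_\delta]$). Applying Lemma \ref{gamma} with our $\gamma<\delta$, the family
\[
\calS_\delta(E,\gamma)=\{\,M\in\calS_\delta(E): \gamma,\delta\in M \mand \otp(M\cap\theta)<h_{G_\delta}(\gamma,M\cap\omega_1)\,\}
\]
is then also stationary in $V_\theta$ in $V[G_\delta]$.

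\textbf{Transferring to $V[G]$.} I claim the required ordinal is $\delta$ itself. Work in $V[G]$, and recall two facts: first, that every small node of $\M_G$ lying $\in^*$-above $V_\delta$ belongs to $\calS_\delta$ — this is precisely the content of the identification $\bbQ_\delta\cong\bbM^*_\delta$; second, that for consecutive transitive nodes $V_\rho\in V_\sigma$ of $\M_G$ with $\delta<\rho$ the chain $(V_\rho,V_\sigma)_G$ is, by Lemma \ref{continuity}, a continuous $\in$-chain of length $\omega_1$, hence a club of countable subsets of $V_\sigma$. Combining these with Lemma \ref{gamma} and a density argument — here the clause $h_p\subseteq h_q$ in the ordering of $\bbM^*_{\calS,\calT}$ is crucial, since it \emph{freezes} the value $h_G(\delta,\xi)$ as soon as $(\delta,\xi)$ enters the domain of the height function, and so forbids any later, more spread-out $\delta$-node at level $\xi$ — one obtains a stationary set $S\subseteq\omega_1$ such that for every $\xi\in S$ there is a node $N\in\calS_\delta(E,\gamma)\cap\M_G$ with $N\cap\omega_1=\xi$ and $h_{G,\delta}(\xi)=\otp(N\cap\theta)$. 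Fix $\xi\in S$. Then $\xi=N\cap\omega_1\in E$; since $\gamma,\delta\in N\in\M_G$ we have $(\gamma,\xi),(\delta,\xi)\in\dom h_G$, i.e.\ $\xi\in D_\gamma\cap D_\delta$; and, since $\M_{G_\delta}\subseteq\M_G$ and $N\in\calS_\delta(E,\gamma)$,
\[
h_{G,\delta}(\xi)=\otp(N\cap\theta)<h_{G_\delta}(\gamma,\xi)\le h_{G}(\gamma,\xi)=h_{G,\gamma}(\xi).
\]
Hence $\xi\in\varphi(E,\gamma,\delta)$, and therefore $\varphi(E,\gamma,\delta)\supseteq S$ is stationary, as required.

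\textbf{Main obstacle.} The delicate point is the step producing $S$: one must show that for stationarily many $\xi$ the height value $h_{G,\delta}(\xi)$ is \emph{realized} by a node of the thin family $\calS_\delta(E,\gamma)$, not merely by some node of $\calS_\delta$. This combines (i) the structural fact, built into $\bbM^*_\delta$, that every small node of $\M_G$ above $V_\delta$ has order type bounded by the appropriate value of $h_{G_\delta}$ (so $h_{G,\delta}(\xi)$ does not overshoot), with (ii) a genericity argument forcing $h_G(\delta,\xi)$ to be pinned down by a member of $\calS_\delta(E,\gamma)$ for enough $\xi\in E$, where Lemma \ref{gamma} supplies the candidate nodes and Lemma \ref{continuity} places them inside $\M_G$. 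The $\theta$-c.c./nice-name reduction of the first step is routine by comparison.
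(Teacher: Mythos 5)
Your plan is the same as the paper's, and you have correctly identified all the ingredients: find $\delta \in T^*$ with $V_\delta\in\M_G$, $\gamma<\delta$ and $E\in V[G_\delta]$; pass through Lemma~\ref{characterization} to get $\calS_\delta(E)$ stationary in $V[G_\delta]$ and then Lemma~\ref{gamma} to get $\calS_\delta(E,\gamma)$ stationary; and exploit the clause $h_p\subseteq h_q$ in the ordering to \emph{freeze} $h_G(\delta,\xi)$ at the moment $(\delta,\xi)$ enters the domain of the height function. What is missing is the concrete density argument that you correctly flag in your ``Main obstacle'' paragraph as the delicate point: you assert that one ``obtains'' a stationary $S$ of levels $\xi$ where $h_{G,\delta}(\xi)$ is realized by a node of $\calS_\delta(E,\gamma)$, but you do not exhibit the dense set. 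The paper's version fills this in by arguing directly over $\bbQ_\delta$ in $V[G_\delta]$: given $p\in\bbQ_\delta$ and a $\bbQ_\delta$-name $\dot C$ for a club, take $M^*\prec V_{\theta^*}$ containing $p$, $\dot C$ and the other parameters with $M=M^*\cap V_\theta\in\calS_\delta(E,\gamma)$ (possible since $\calS_\delta(E,\gamma)$ is stationary), and let $q=p^M$ be the condition of Lemma~\ref{M_p*} (for $\bbQ_\delta$) with $M$ as its top node. Genericity of $q$ over $M^*$ gives $q\Vdash M\cap\omega_1\in\dot C$, while putting $M$ on top makes $h_q(\delta,M\cap\omega_1)=\otp(M\cap\theta)$, which by $M\in\calS_\delta(E,\gamma)$ is $<h_{G_\delta}(\gamma,M\cap\omega_1)\le h_{G,\gamma}(M\cap\omega_1)$. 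Thus $q\Vdash M\cap\omega_1\in\dot\varphi(E,\gamma,\delta)\cap\dot C$, and the maximal condition of $\bbQ_\delta$ forces $\dot\varphi(E,\gamma,\delta)$ to be stationary.

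Two smaller remarks. First, your item~(i) in the obstacle paragraph slightly misstates the role of the $\calS_\delta$ constraint: for $M\in\calS_\delta$ the bound $\otp(M\cap\theta)\le h_{G_\delta}(\alpha,M\cap\omega_1)$ holds only for $\alpha\in M\cap\delta$, not for $\alpha=\delta$ itself, so it does not by itself prevent $h_{G,\delta}(\xi)$ from ``overshooting''; the cap on $h_{G,\delta}(\xi)$ comes entirely from the freezing mechanism via the top-node choice, not from the definition of $\calS_\delta$. Second, Lemma~\ref{continuity} is not actually needed here (it plays no role in the paper's proof of this lemma), and in your setup it is slightly off to fix $\delta$ by the nice-name argument \emph{and then} claim by density that $V_\delta\in\M_G$ — for a fixed $\delta$ this fails; the paper instead uses $\calT^*$-strong properness to choose $\delta$ so that $V_\delta\in\M_G$ and $E\in V[G_\delta]$ simultaneously.
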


\begin{proof} Since $\mathbb M^*_{\calS,\calT}$ is $\calT^*$-proper, we can find $\delta \in T^*\setminus (\gamma +1)$
such that $V_\delta \in \M_G$ and $E\in V[G_\delta]$, where $G_\delta = G\cap V_\delta$.
Since $E$ remains stationary in $V[G]$, it follows that, in $V[G_\delta]$, $\calS_\delta(E)$ is stationary.
Work for a while in $V[G_\delta]$. We claim that the maximal condition in $\bbQ_\delta$ forces that
$\dot{\varphi}(E,\gamma,\delta)$ is stationary, where $\dot{\varphi}(E,\gamma,\delta)$ is the canonical name for $\varphi(E,\gamma,\delta)$.
To see this fix a $\bbQ_\delta$-name $\dot{C}$ for a club in $\omega_1$ and a condition $p\in \bbQ_\delta$.
Let $\theta^*>\theta$ be such that $(V_{\theta^*},\in)$ satisfies a sufficient fragment of $\ZFC$.
We know, by Lemma \ref{gamma}, that $\calS_\delta(E,\gamma)$ is stationary, so we can find a countable elementary
submodel $M^*$ of $V_{\theta^*}$ containing all the relevant objects such that $M\in \calS_\delta(E,\gamma)$,
where $M=M^* \cap V_\theta$. Let $q$ be the condition $p^M$ as in Lemma \ref{M_p*} (or rather its version
for $\bbQ_\delta$). Since $\dot{C}\in M^*$ and $q$ is $(M^*,\bbQ_\delta)$-generic, it follows
that $q$ forces that $M\cap \omega_1$ belongs to $\dot{C}$.
Also, note that the top model of $\M_q$ is $M$. Hence $h_q(\delta,M\cap\omega_1) = {\rm o.t.}(M\cap \theta)$.
Since $M\in \calS_\delta(E,\gamma)$ we have that ${\rm o.t.}(M\cap \theta) <h_{G_\delta}(\gamma,M\cap \omega_1)$.
It follows that $q$ forces that $M\cap \omega_1$ belongs to the intersection of $\dot{\varphi}(E,\gamma,\delta)$
and $\dot{C}$, as required.
\end{proof}

We now have the following conclusion.

\begin{thm}\label{pure-precipitous} Let $G$ be $V$-generic over $\mathbb M^*_{\calS , \calT}$. Then, in $V[G]$,
$\theta =\omega_2$ and ${\rm NS}_{\omega_1}$ is not precipitous.
\end{thm}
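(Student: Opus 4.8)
The plan is to use the game characterization of precipitousness from Fact \ref{game char}: it suffices to construct, in $V[G]$, a winning strategy for player ${\rm I}$ in the game $\mathcal G_{{\rm NS}_{\omega_1}}$. The point is that Corollary \ref{dominating} hands us, for each $\alpha < \theta$, a function $h_{G,\alpha}$ defined on a club and dominating under $<_{{\rm NS}_{\omega_1}}$ every canonical function $f_\beta$ with $\beta < \theta = \omega_2$; so each $h_{G,\alpha}$ has Galvin--Hajnal norm $\geq \omega_2$. If player ${\rm I}$ can steer the play so that the stationary sets $E_n$ get labelled by a strictly descending sequence of indices $\alpha_0 > \alpha_1 > \cdots$ with the property that on $E_{n+1}$ we have $h_{G,\alpha_{n+1}} < h_{G,\alpha_n}$, then on $\bigcap_n E_n$ the values $h_{G,\alpha_n}(\xi)$ would form an infinite descending sequence of ordinals, which is impossible; hence $\bigcap_n E_n = \emptyset$ and ${\rm I}$ wins.

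\medskip

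First I would fix, in $V[G]$, an enumeration and use Lemma \ref{shrinking} as the engine of the strategy. Player ${\rm I}$ opens by playing $E_0 = \omega_1$ and records the index $\gamma_0 = $ any ordinal, say $0$; more usefully, ${\rm I}$ should first apply Lemma \ref{shrinking} with $\gamma = $ (some large enough) to get going. The cleaner bookkeeping: maintain after round $2n$ a stationary set $E_{2n}$ together with an ordinal $\gamma_n < \theta$ such that $E_{2n} \subseteq D_{\gamma_n}$ and, for each $m < n$, $h_{G,\gamma_n} < h_{G,\gamma_m}$ holds on $E_{2n}$ (i.e.\ $E_{2n} \subseteq \varphi(E_{2m}, \gamma_m, \gamma_{m+1}) \cap \cdots$, arranged by intersecting). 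When ${\rm II}$ responds with an arbitrary stationary $E_{2n+1} \subseteq E_{2n}$, player ${\rm I}$ invokes Lemma \ref{shrinking} with $E = E_{2n+1}$ and $\gamma = \gamma_n$ to obtain $\delta < \theta$ with $\varphi(E_{2n+1}, \gamma_n, \delta)$ stationary; set $\gamma_{n+1} = \delta$ and $E_{2n+2} = \varphi(E_{2n+1}, \gamma_n, \gamma_{n+1})$. By definition of $\varphi$, every $\xi \in E_{2n+2}$ satisfies $h_{G,\gamma_{n+1}}(\xi) < h_{G,\gamma_n}(\xi)$, and $E_{2n+2} \subseteq E_{2n+1} \subseteq E_{2n}$, so the invariant is preserved and all rules of the game are respected.

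\medskip

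Then I would verify that this is a winning strategy. Suppose the play runs forever with ${\rm I}$ following the strategy; since each $E_{2n+2} \subseteq E_{2n}$ and all are stationary (hence nonempty), and $E_{2n+1}$ is squeezed between them, if $\xi \in \bigcap_n E_n$ then for every $n$ we have $\xi \in E_{2n+2}$, whence $h_{G,\gamma_{n+1}}(\xi) < h_{G,\gamma_n}(\xi)$ for all $n$ --- an infinite strictly decreasing sequence of ordinals, contradiction. Therefore $\bigcap_n E_n = \emptyset$ and ${\rm I}$ wins. By Fact \ref{game char}, ${\rm NS}_{\omega_1}$ is not precipitous in $V[G]$. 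That $\theta = \omega_2$ in $V[G]$ is immediate from Corollary \ref{sp}: $\calS$ stationary preserves $\omega_1$, $\calT$ stationary preserves $\theta$, and every cardinal strictly between is collapsed (the chains $(W,W')_G$ of Lemma \ref{continuity} have order type $\omega_1$, collapsing the intermediate cardinals), so $\theta$ becomes $\omega_2$.

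\medskip

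The main obstacle is making sure the strategy is genuinely available in $V[G]$ and that the bookkeeping of indices never runs out: each application of Lemma \ref{shrinking} produces some $\delta < \theta$, and there is no monotonicity forcing the $\gamma_n$ to descend --- which is fine, because we do not need them to descend; we only need the \emph{functions} $h_{G,\gamma_n}$ to descend on the successive $E$'s, and that is exactly what $\varphi$ delivers. One should double-check that Lemma \ref{shrinking} is applied correctly, namely that at stage $n$ the set $E_{2n+1}$ is indeed stationary in $V[G]$ (it is, being ${\rm II}$'s legal move) and that the output $\delta$ can be taken $< \theta$ (guaranteed by the statement). A minor point to spell out is that the strategy, as a function on finite partial plays, lies in $V[G]$: this holds because $h_G$, the family $\{D_\gamma\}$, and the operation $\varphi$ are all definable from $G$ in $V[G]$, and Lemma \ref{shrinking} is a true statement there, so ${\rm I}$ may at each stage choose (using a fixed well-ordering of $V[G]$, or of $H(\theta^+)^{V[G]}$) the least suitable $\delta$.
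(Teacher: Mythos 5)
Your proof is correct and follows the same line as the paper: the $\theta=\omega_2$ claim via strong properness and the collapsing produced by Lemma \ref{continuity}, and non-precipitousness via the game characterization, using Lemma \ref{shrinking} to build a strategy for player I that forces the values $h_{G,\gamma_n}(\xi)$ to descend on $\bigcap_n E_n$. The extra bookkeeping you discuss (the invariant $E_{2n}\subseteq D_{\gamma_n}$, the choice of $\gamma_0$) is inessential, since the contradiction only needs that $\xi\in E_{2n+2}$ yields $h_{G,\gamma_{n+1}}(\xi)<h_{G,\gamma_n}(\xi)$, exactly as in the paper's own argument.
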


\begin{proof} We already know that $\mathbb M^*_{\calS,\calT}$ is $\calS^* \cup \calT^*$-strongly proper.
This implies that $\omega_1$ and $\theta$ are preserved. Moreover, by Lemma \ref{continuity}, we know
that all cardinals between $\omega_1$ and $\theta$ are collapsed to $\aleph_1$. Therefore, $\theta$
becomes $\omega_2$ in $V[G]$.
In order to show that ${\rm NS}_{\omega_1}$ is not precipitous we describe a winning strategy $\tau$
for Player I in $\mathcal G_{{\rm NS}_{\omega_1}}$. On the side, Player I will pick a sequence $(\gamma_n)_n$ of ordinals $<\theta$.
So, Player I starts by playing $E_0=\omega_1$ and letting $\gamma_0=0$.
Suppose, in the $n$-th inning, Player II has played a stationary set $E_{2n+1}$.
Player I applies Lemma \ref{shrinking} to find $\delta <\theta$ such that $\varphi(E_{2n+1},\gamma_n,\delta)$
is stationary. He then lets $\gamma_{n+1} =\delta$ and plays $E_{2n+2}= \varphi(E_{2n+1},\gamma_n,\gamma_{n+1})$.
Suppose the game continues $\omega$ moves and II respects the rules. We need to show that $\bigcap_n E_n$ is empty.
Indeed, if $\xi \in \bigcap_n E_n$ then $\xi \in D_{\gamma_n}$, for all $n$, and
$h_{G,\gamma_0}(\xi) > h_{G,\gamma_1}(\xi) >  \ldots$ is an infinite decreasing sequence of ordinals,
a contradiction.
\end{proof}

\section{The Working Parts}

In this section we show how to add the working part to the side condition poset described in \S 2, which
allows us to define a Neeman style iteration.  As in \cite{Neeman}, if at each stage we choose a proper forcing,
the resulting forcing notion will be proper as well.  By a standard argument, if we use the Laver function to guide
our choices,  we we obtain $\PFA$ in the final model.
The point is that the relevant lemmas from \S 2 and \S 3 go through almost verbatim and hence we obtain, as before, 
that ${\rm NS}_{\omega_1}$ will be non precipitous in the final model. 

Let us now recall the iteration technique from \cite{Neeman}. We fix an inaccessible cardinal $\theta$
and a function $F:\theta \to V_\theta$. Let $\mathcal K$ be the structure $(V_\theta,\in,F)$.
Let $\calS$ be the set of all countable elementary submodels of $\mathcal K$ and $T$ the set of all
$\alpha <\theta$ of uncountable cofinality such that $V_\alpha$ is an elementary submodel of $\mathcal K$.
Let $\calT=\{ V_\alpha : \alpha \in T\}$. Define $\calS^*$, $T^*$ and $\calT^*$ as before.
Note that if $\alpha \in  T^*$ then $T^*\cap \alpha$ is definable in $\mathcal K$ from parameter $\alpha$.
Hence, if $\M\in \calS$ and $\alpha \in T^*$ then $M\cap V_\alpha \in \calS^*$.
We will define, by induction on $\alpha \in T^*\cup \{ \theta\}$, a forcing notion $\mathbb P_\alpha$.
In general, $\mathbb P_\alpha$ consists of tripes $p$ of the form $(\M_p,d_p,w_p)$ such that
$(\M_p,d_p)$ is a reflecting condition in $\mathbb M^*_{\calS,\calT}$ and $w_p$ is a finite partial
function from $T^*\cap \alpha$ to $V_\alpha$ with some properties.
If $\alpha <\beta$ are in $T^*\cup \{ \theta\}$ and $p \in \mathbb P_\beta$ we let $p \restriction \alpha$ denote
$(\M_p\cap V_\alpha, d_p \restriction (\M_p \cap V_\alpha), w_p\restriction V_\alpha)$.
It will be immediate from the definition that $p \restriction \alpha \in \mathbb P_\alpha$.
Moreover, since $(\M_p,d_p)$ is reflecting, it will be an extension of $(\M_p \cap V_\alpha, d_p \restriction (\M_p\cap V_\alpha))$.
For $\alpha \in T^*$ we will also be interested in the partial order $\mathbb P_\alpha \cap V_\alpha$.
We let $\dot{G_\alpha}$ denote the canonical $\mathbb P_\alpha \cap V_\alpha$-name for the generic filter.
If $M \in \calS\cup \calT$ and $\alpha \in M$ we let $M[\dot{G}_\alpha]$ be the canonical $\mathbb P_\alpha \cap V_\alpha$-name
for the model $M[G_\alpha]$, where $G_\alpha$ is the generic filter.
If $F(\alpha)$ is a $\mathbb P_\alpha \cap V_\alpha$-name which is forced by the maximal condition to be a proper forcing
notion we let $\dot{\mathbb F}_\alpha$ denote $F(\alpha)$; otherwise let $\dot{\mathbb F}_\alpha$ denote
the $\mathbb P_\alpha \cap V_\alpha$-name for the trivial forcing. Let $\leq_{\mathbb F_\alpha}$ be the name for the ordering on $\dot{\mathbb F}_\alpha$.

We are now ready for the main definition.

\begin{defn} Suppose $\alpha \in T^*\cup \{ \theta\}$. Conditions in $\mathbb P_\alpha$ are triples $p$ of the form
$(\M_p,d_p,w_p)$ such that:
\begin{enumerate}
\item $(\M_p,d_p)$ is a  reflecting condition in $\mathbb M^*_{\calS,\calT}$,
\item $w_p$ is a finite function with domain contained in the set $\{ \gamma \in T^* \cap \alpha : V_\gamma \in \M_p\}$,
\item if $\gamma \in \dom(w_p)$ then:
\begin{enumerate}
\item $w_p(\gamma)$ is a canonical $\mathbb P_\gamma \cap V_\gamma$-name for an element of $\dot{\mathbb F}_\gamma$,
\item if $M\in \calS \cap \M_p$ and $\gamma \in M$ then
\[ p \restriction \gamma \forces_{{\mathbb P}_\gamma \cap V_\gamma} w_p(\gamma) \mbox{ is }
(M[\dot{G_\alpha}],\mathbb F_\alpha)\mbox{-generic}.
\]

\end{enumerate}
\end{enumerate}
We let $q\leq p$ if $(\M_q,d_q)$ extends $(\M_p,d_p)$ in $\mathbb M^*_{\calS,\calT}$, $\dom(w_p)\subseteq \dom(w_q)$
and, for all $\gamma \in \dom(p)$,
\[
q\restriction \gamma \forces_{{\mathbb P}_\gamma \cap V_\gamma} w_q(\gamma)\leq_{\mathbb F_\gamma}w_p(\gamma).
\]

\end{defn}

Our posets $\mathbb P_\alpha$ is almost identical as the posets $\mathbb A_\alpha$ from \cite{Neeman}.
The difference is that we have a requirement that the height function ${\rm ht}_p$ of a condition $p$ is preserved
when going to a stronger condition and we also added the decoration $d_p$. We are restricting ourselves
to reflecting conditions $p$ since we then know that $p$ is an extension of $p \restriction \alpha$,
for any $\alpha\in T^*$ such that $V_\alpha$ is a node in $\M_p$. Of course, the working part $w_p$ is
defined only for such $\alpha$.
These modifications do not affect the relevant arguments from \cite{Neeman}. We state the main
properties of our posets and refer to \cite{Neeman} for the proofs.

\begin{lem}\label{strongly-proper} Suppose $\beta$ belongs to $T^*\cup \{ \theta\}$.
\begin{enumerate}
\item Let $p\in \mathbb P_\beta$ and let $V_\alpha \in \M_p \cap \calT^*$. Then $p$ is $(V_\alpha,\mathbb P_\beta)$-strongly generic.
\item Let $p\in \mathbb P_\beta$, let $V_\alpha \in \calT$ and suppose $p \in V_\alpha$. Then
$(\M_p \cup \{ V_\alpha\},d_p,w_p)$ is a condition in $\mathbb P_\beta$.
\item $\mathbb P_\beta$ is $\calT^*$-strongly proper.
\end{enumerate}
\end{lem}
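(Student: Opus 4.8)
The plan is to prove all three clauses simultaneously by induction on $\beta\in T^*\cup\{\theta\}$, running the corresponding arguments of \cite{Neeman} for the posets $\mathbb A_\beta$ essentially verbatim, and inserting at each step only the bookkeeping needed to keep track of the height functions $h_p$ --- which is exactly what Lemma~\ref{scl2} and Corollary~\ref{fully reflect} were set up to handle. The only genuinely new points are that a reduct of a condition to a transitive node $V_\alpha$ is a weakening \emph{only because the condition is reflecting}, and that amalgamation of two side conditions over $V_\alpha$ is now carried out with Lemma~\ref{scl2} in place of Lemma~\ref{scl}. I would prove (2) first, then (1) using (2) and the inductive hypothesis, and finally (3) as a formal consequence of (1) and (2).

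For (2): the pure side condition $(\M_p,d_p)$ is a reflecting condition of $\mathbb M^*_{\calS,\calT}$ lying in $V_\alpha$, so Lemma~\ref{M_p*} yields the extension with $V_\alpha$ added at the top; since $p\in V_\alpha$ every node of $\M_p$ is an element of $V_\alpha$, so $\M_q\cap V_\alpha=\M_p$, no countable node has been introduced, and $h_p$ is unchanged, from which one checks routinely that $q:=(\M_p\cup\{V_\alpha\},d_p,w_p)$ is still reflecting and that the clauses on $w_p$ persist. Hence $q\in\mathbb P_\beta$ and $q\le p$. For (1): fix $p\in\mathbb P_\beta$ and a transitive node $V_\alpha\in\M_p\cap\calT^*$, and to each $q\le p$ assign the reduct $q\restriction\alpha=(\M_q\cap V_\alpha,\,d_q\restriction V_\alpha,\,w_q\restriction V_\alpha)$; by the inductive hypothesis at $\alpha$ this is a condition of $\mathbb P_\alpha\cap V_\alpha$, its generic clauses being inherited because $q\restriction\gamma$ is literally the same object whether read off $q$ or $q\restriction\alpha$ for $\gamma<\alpha$. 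Because $q$ is reflecting and $V_\alpha$ is one of its transitive nodes, $q$ reflects to $V_\alpha$, so $h_{q\restriction\alpha}$ coincides with $h_q\restriction V_\alpha$ and $q\le q\restriction\alpha$; consequently, for any $r\le q\restriction\alpha$ in $\mathbb P_\alpha\cap V_\alpha$, the pure side condition underlying $r$ is a reflection of $q$ inside $V_\alpha$. Lemma~\ref{scl2} then supplies the amalgamated side condition $\M_s$ --- the closure of $\M_q\cup\M_r$ under intersection, with $\M_s\cap V_\alpha=\M_r$ and with $h_s$ automatically extending $h_q$ and $h_r$ --- and one sets $d_s=d_q\cup d_r$, $w_s=w_q\cup w_r$, and verifies that $s\in\mathbb P_\beta$ with $s\le q,r$; this exhibits $p$ as $(V_\alpha,\mathbb P_\beta)$-strongly generic.

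The step I expect to be the main obstacle is this last verification, specifically checking clause (3b) for $s$ at the new \emph{hybrid} small nodes of $\M_s$ outside $V_\alpha$, which by Lemma~\ref{scl2}(3) have the form $M$ or $M\cap W$ with $M$ a small node of $q$ and $W$ a transitive node of $r$: one must transfer the genericity of $w_s(\gamma)$ to these nodes, for $\gamma\in\dom(w_s)$ with $\gamma<\alpha$, using the inductive hypothesis, exactly as in Neeman's amalgamation argument. I do not expect this to require any change to Neeman's reasoning, since the extra constraint $h_s\supseteq h_q,h_r$ is already delivered by Lemma~\ref{scl2} and does not interact with the working parts; but it is the one place where one must re-examine Neeman's proof rather than simply cite it. Granting (1) and (2), clause (3) is immediate: given $V_\alpha\in\calT^*$ and $p\in\mathbb P_\beta\cap V_\alpha$, the condition $q=(\M_p\cup\{V_\alpha\},d_p,w_p)$ is a reflecting extension of $p$ in $\mathbb P_\beta$ by (2), and is $(V_\alpha,\mathbb P_\beta)$-strongly generic by (1), so $\mathbb P_\beta$ is $\calT^*$-strongly proper. (The parallel $\calS^*$-strong genericity, which gives preservation of $\omega_1$, is obtained the same way, first passing to a reflecting extension via Corollary~\ref{fully reflect} and then amalgamating over a countable node with Lemma~\ref{scl2}, though it is not part of the present statement.)
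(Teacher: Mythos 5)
Your proposal is correct and matches the paper's approach: the paper itself proves this lemma simply by citing Neeman's Lemma 6.7 and declaring the argument "essentially the same," and your write-up is a faithful expansion of that citation, correctly identifying that the only genuinely new content is (a) reflecting conditions make the reduct $q\restriction\alpha$ a weakening of $q$, and (b) amalgamation over a transitive node is now done via Lemma~\ref{scl2} (so that the height functions are preserved) rather than Lemma~\ref{scl}. The one place you are slightly imprecise --- writing $d_s = d_q\cup d_r$ rather than taking the decoration supplied directly by Lemma~\ref{scl2}, and not spelling out that the amalgam inherits the reflecting property from $q$ and $r$ (which it does, since reflecting passes down through $V_\alpha$ and the end nodes of $\M_s$ are exactly those of $\M_q$ and $\M_r$) --- is glossed over just as lightly in the paper, so this does not constitute a gap relative to the intended argument.
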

\begin{proof} This is essentially the same as Lemma 6.7 from \cite{Neeman}.
\end{proof}

\begin{lem}\label{top-model} Suppose $\beta \in T^*\cup \{ \theta\}$ and $p \in \mathbb P_\beta$. Let $M\in \calS$ be such that $p\in M$.
Then there is a condition $q \in \mathbb P_\beta$ extending $p$ such that $M$ is the top model of $q$.
\end{lem}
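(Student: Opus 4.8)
The plan is to follow the corresponding lemma of \cite{Neeman} (proving it, as there, by induction on $\beta$ alongside Lemma \ref{strongly-proper}), treating the side condition $(\M_p,d_p)$ and the working part $w_p$ separately. The side condition is the easy part: since $p\in M$ we have $(\M_p,d_p)\in M$, so Lemma \ref{M_p*} yields an extension of $(\M_p,d_p)$ whose top node is $M$, and then Corollary \ref{fully reflect} yields a reflecting condition $(\M_q,d_q)\leq(\M_p,d_p)$ still having $M$ as its top node. By Lemma \ref{M_p*} and the use of Lemma \ref{scl2} inside the proof of Corollary \ref{fully reflect}, the small nodes of $\M_q$ not already in $\M_p$ are either $M$ itself or countable nodes contained in a transitive node of $\M_q$; moreover every node below $M$ in the $\in$-chain, being an element of $M$ and genuinely countable, is actually a subset of $M$, hence an elementary submodel of $M$. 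We shall also use that a reflecting condition restricts to a reflecting condition at each of its transitive nodes, so that $(\M_q\cap V_\alpha, d_q\restriction(\M_q\cap V_\alpha))$ is reflecting for every $V_\alpha\in\M_q\cap\calT$.

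For the working part I keep $\dom(w_q)=\dom(w_p)$; this is a finite subset of $T^*\cap\beta$ contained in $M$ (since $\dom(w_p)\in M$), and $V_\gamma\in\M_p\subseteq\M_q$ for each such $\gamma$, so clause (2) of the definition of $\mathbb P_\beta$ holds. Enumerating $\dom(w_p)$ in increasing order as $\gamma_1<\dots<\gamma_k$, I define $w_q(\gamma_i)$ by recursion on $i$. At stage $i$ the triple $q\restriction\gamma_i=(\M_q\cap V_{\gamma_i},\ d_q\restriction(\M_q\cap V_{\gamma_i}),\ w_q\restriction V_{\gamma_i})$ is already fully determined (its working coordinates are just $\gamma_1,\dots,\gamma_{i-1}$); granting the inductive hypotheses it is a condition of $\mathbb P_{\gamma_i}\cap V_{\gamma_i}$, and by the strong properness of $\mathbb P_{\gamma_i}$ with respect to $\calS^*\cup\calT^*$ (the analogue of Corollary \ref{sp} and Lemma \ref{strongly-proper} at stage $\gamma_i$, established in \cite{Neeman}) together with the fact that $(\M_q,d_q)$ is reflecting, $q\restriction\gamma_i$ is $(M\cap V_{\gamma_i},\ \mathbb P_{\gamma_i}\cap V_{\gamma_i})$-strongly generic — note $M\cap V_{\gamma_i}\in\calS^*$ since $M\in\calS$ and $\gamma_i\in T^*$. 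Hence $q\restriction\gamma_i$ forces $M[\dot G_{\gamma_i}]$ to be an elementary submodel of the appropriate structure of the $\mathbb P_{\gamma_i}\cap V_{\gamma_i}$-extension, and as $\dot{\mathbb F}_{\gamma_i}$ is forced to be proper there is, below $w_p(\gamma_i)$, a master condition for $M[\dot G_{\gamma_i}]$; let $w_q(\gamma_i)$ be a canonical $\mathbb P_{\gamma_i}\cap V_{\gamma_i}$-name for such a condition. Setting $q:=(\M_q,d_q,w_q)$ completes the construction.

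Finally I would verify that $q$ is a condition of $\mathbb P_\beta$, that $q\leq p$, and that $M$ is its top node. Clauses (1), (2), (3a) are immediate; for (3b), fix $\gamma\in\dom(w_q)$ and a small node $N$ of $\M_q$ with $\gamma\in N$. If $N\in\M_p$, the requirement held for $p$ and persists because $q\restriction\gamma$ only strengthens $p\restriction\gamma$. If $N=M$, it holds by the choice of $w_q(\gamma)$. Otherwise $N$ is a new small node, hence contained in some transitive node $V_\delta$ of $\M_q$ with $\gamma<\delta$ (as $\gamma\in N\subseteq V_\delta$), so $\mathbb F_\gamma\in V_\delta$ and a short elementarity computation (as in \cite{Neeman}) shows that every maximal antichain of $\mathbb F_\gamma$ lying in $N[\dot G_\gamma]$ is a subset of $V_\delta[\dot G_\gamma]$, which together with $N\subseteq M$ lets the $(M[\dot G_\gamma],\mathbb F_\gamma)$-genericity of $w_q(\gamma)$ descend to $(N[\dot G_\gamma],\mathbb F_\gamma)$-genericity. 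The step I expect to be the main obstacle is the recursion in the middle paragraph, namely the bookkeeping that at each coordinate $\gamma<\beta$ one may simultaneously keep $q\restriction\gamma$ a condition, force $M[\dot G_\gamma]$ to be a suitable elementary submodel, and choose a master condition below $w_p(\gamma)$; this is precisely the inductive core of the properness arguments of \cite{Neeman}, and since our modifications — the height function and the decoration — leave those arguments intact, we refer the reader there for the details.
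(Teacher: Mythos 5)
Your proposal is correct and takes essentially the same approach as the paper: extend the side condition via Lemma \ref{M_p*} and Corollary \ref{fully reflect} so that $M$ becomes the top node of a reflecting condition, then use the forced properness of $\dot{\mathbb F}_\gamma$ to choose names $w_q(\gamma)$ for master conditions extending $w_p(\gamma)$. The paper's version is terser---it fixes all $w_q(\alpha)$ at once via a name forced by $p\restriction\alpha$ and leaves the verification of clause (3b), in particular for the new small nodes introduced by closing under intersection, to the reader---so your explicit coordinate-by-coordinate recursion and the elementarity argument for the new nodes are simply a more careful presentation of the same argument, not a different route.
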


\begin{proof} First, let $\M$ be closure of $\M_p \cup \{ M\}$ under intersection and let $d$ be the extension of $d_p$
to $\M$ defined by letting $d(N)=\emptyset$, for all $N\in \M \setminus \M_p$. Then $(\M,d)\in \mathbb M^*_{\calS,\calT}$.
By Lemma \ref{fully reflect} we can find a reflecting condition $(\M_q,d_q) \leq (\M,d)$
such that the top model of $\M_q$ is $M$.
Now, we need to define $w_q$. If $\alpha \in \dom(w_p)$ then $\mathbb P_\alpha \cap V_\alpha, \dot{\mathbb F}_\alpha \in M$.
Since $\dot{\mathbb F}_\alpha$ is forced by the maximal condition in $\mathbb P_\alpha \cap V_\alpha$ to be proper
and $w_p(\alpha)\in M$ is a canonical name for a  member of $\dot{\mathbb F}_\alpha$,
we can fix a canonical $\mathbb P_\alpha \cap V_\alpha$-name $w_q(\alpha)$ for a member of  $\dot{\mathbb F}_\alpha$
such that $p \restriction \alpha$ forces in $\mathbb P_\alpha \cap V_\alpha$ that  $w_q(\alpha)$ extends $w_p(\alpha)$
and is $(M[\dot{G_\alpha}],\dot{\mathbb F}_\alpha)$-generic. Then the condition $q=(\M_q,d_q,w_q)$ is as required.
\end{proof}

\begin{lem}\label{generic-condition} Suppose $\beta \in T^*\cup \{ \theta\}$ and $p \in \mathbb P_\beta$. Let $\theta^*>\theta$ be such that
$(V_{\theta^*},\in)$ satisfies a sufficient fragment of $\ZFC$. Let $M^*$ be a countable elementary submodel of $V_{\theta^*}$
containing all the relevant parameters. Let $M=M^* \cap V_\theta$ and suppose $M\in \M_p$.
Then $p$ is $(M^*,\mathbb P_\beta)$-generic.
\end{lem}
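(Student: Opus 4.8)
The statement is the standard ``countable models are generic'' lemma, and the plan is to prove it by the usual density/closure argument, using the results already established for $\mathbb{P}_\beta$. The key point is that a condition $p$ with $M = M^* \cap V_\theta$ as a node of $\mathcal{M}_p$ should be $(M^*, \mathbb{P}_\beta)$-generic; that is, for every dense open $D \in M^*$ and every $q \leq p$ there is $r \leq q$ with $r$ compatible with every element of $D \cap M^*$, equivalently $r$ forces the generic filter to meet $D \cap M^*$. I would first reduce to the case $q = p$ by noting that if $q\le p$ then $q$ still has $M$ as a node (or we can pass to a further extension via Lemma \ref{top-model} that restores $M$ as the top model without losing genericity), so it suffices to show every $q\le p$ with $M$ a node can be extended to meet $D\cap M^*$.

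\textbf{Key steps.} First I would invoke Lemma \ref{top-model} to arrange, extending $q$ if necessary, that $M$ is the \emph{top} model of $\mathcal{M}_q$; note that everything relevant ($\mathbb{P}_\beta$, $D$, the parameters) lies in $M^*$ and hence the restriction $q | M$ and its data are elements of $M$. Next, since $M \prec \mathcal{K}^*$ and $D \cap M^*$ is dense below $q|M$ inside the ``$M$-version'' of the poset, one finds, \emph{working inside $M$}, a condition $q' \in D \cap M^*$ together with a reflection-style witness that $q'$ is compatible with the reflected part of $q$; this is exactly where one uses the combination of Corollary \ref{fully reflect} (replace $q$ by a reflecting condition) with Lemma \ref{scl2} (amalgamate $q$ with a condition living inside the top model $M$ along the node $M$). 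Concretely: replace $q$ by a reflecting condition $\bar q \le q$ with top model $M$; then $\bar q | M$ is a reflection of $\bar q$ to $M$, so $\bar q \in \mathbb{Q}$-type factor over $\bar q|M$; inside $M$ choose $q' \in D$ below $\bar q | M$ (possible by elementarity, since $M^*$ sees that $D$ is dense); finally apply Lemma \ref{scl2} with $Q = M$ to the pair $(\bar q, q')$ to get $r \le \bar q, q'$, hence $r \le p$ and $r \in D$. For the working-part coordinates one checks, exactly as in the corresponding step of Lemma \ref{top-model}, that the names $w_r(\gamma)$ can be chosen to extend both $w_{\bar q}(\gamma)$ and $w_{q'}(\gamma)$ and to remain $(M[\dot G_\gamma], \dot{\mathbb{F}}_\gamma)$-generic, using the inductive hypothesis that $\mathbb{P}_\gamma$ already has this genericity property and that $\dot{\mathbb{F}}_\gamma$ is forced proper.

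\textbf{Main obstacle.} The delicate point, and the only place our modification of Neeman's setup bites, is the height-function bookkeeping inside the amalgamation: Lemma \ref{scl2} requires $q'$ to be a genuine \emph{reflection} of $\bar q$ to $M$ (so that $h_{q'}$ extends $h_{\bar q}\restriction M$), not merely an extension of $\bar q | M$. So the care is to pick $q' \in D \cap M^*$ refining $\bar q | M$ \emph{while still being a reflection of $\bar q$}; this is legitimate because, as observed just before Lemma \ref{scl2}, any condition in $M$ stronger than a reflection of $\bar q$ is again a reflection of $\bar q$, and $\bar q | M$ is itself such a reflection since $\bar q$ is reflecting with top node $M \in \calS^*$. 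Granting that, Lemma \ref{scl2} produces $r$ with $h_r \supseteq h_{\bar q}$, so $r \le \bar q \le q \le p$ in $\mathbb{P}_\beta$, and $r \le q' \in D \cap M^*$ witnesses genericity. One last routine remark: the same argument applied with $D$ ranging over ``$\{q'' : q'' \text{ decides } \dot a\} \in M^*$'' for names $\dot a$ of ordinals shows $M^*[\dot G_\beta] \cap V_\theta$ is forced to equal $M$, which is the form in which this lemma is typically used downstream. The verification that $\mathcal{M}_r$ satisfies $(\ast)$ and that the end-node structure is as claimed is already contained in the proof of Lemma \ref{scl2}, so nothing new is needed there.
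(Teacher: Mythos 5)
The paper's own proof is only a citation to Neeman's Lemma~6.11, so there is no in-paper argument to compare against; but your sketch, as written, has genuine gaps beyond what a routine fleshing-out would fill.

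First, the assertion that ``$\bar q\,|\,M$ is itself such a reflection since $\bar q$ is reflecting with top node $M\in\calS^*$'' does not follow from the definitions: \emph{reflecting} is defined only with respect to \emph{transitive} nodes $W\in\M_p$, and says nothing about a countable node $M$. In fact $\M_{\bar q}$ may contain a countable end node $N$ with $N\cap\omega_1\in M$ but $N\notin M$, in which case $\otp(N\cap\theta)$ can strictly exceed the value $h_{\bar q|M}$ assigns at $(\alpha, N\cap\omega_1)$ for $\alpha\in N\cap M$, so $h_{\bar q|M}$ need not extend $h_{\bar q}\restriction M$. The correct move, as the paper does in the proof of Lemma~\ref{stationary}, is to use $M\prec\mathcal K^*$: since $\bar q$ itself witnesses (in $\mathcal K^*$) that a reflection exists with the parameters $\bar q|M$ and $h_{\bar q}\restriction M$ (both of which lie in $M$ by Lemma~\ref{restriction}), elementarity supplies a reflection $q_0\in M$, and any $q'\le q_0$ with $q'\in D\cap M$ is still a reflection. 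Relatedly, you cannot ``arrange $M$ is the top model of $\M_q$ by extending $q$'': extending only \emph{adds} nodes, so if $q\le p$ already has nodes above $M$ (as it may), they cannot be removed, and the argument has to handle $M$ sitting in the middle of $\M_q$.

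Second, and more substantially: Lemma~\ref{scl2} is a statement about $\bbM^*_{\calS,\calT}$, not about $\mathbb P_\beta$. The amalgamation of the \emph{working parts} across $M$ is exactly the content of Neeman's iteration lemma and is not ``exactly as in the corresponding step of Lemma~\ref{top-model}.'' Lemma~\ref{top-model} merely attaches a new top model above everything and chooses fresh $(M[\dot G_\gamma],\dot{\mathbb F}_\gamma)$-generic names; here, by contrast, one must reconcile the existing $w_q(\gamma)$, for $\gamma$ ranging over transitive nodes both above and below $M$, with a condition $q'\in M$ chosen independently, and the proof proceeds by induction on the transitive nodes of $\M_q$ above $M$, at each level using clause~(3)(b) of the definition of $\mathbb P_\gamma$ to find a common extension of $w_q(\gamma)$ and $w_{q'}(\gamma)$ that remains $(M[\dot G_\gamma],\dot{\mathbb F}_\gamma)$-generic. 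This is the heart of the argument, and treating it as a routine remark leaves the lemma essentially unproved; it is precisely the work that the paper is outsourcing to \cite{Neeman}.
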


\begin{proof} This is essentially the same as Lemma 6.11 from \cite{Neeman}.
\end{proof}

Then, as in \cite{Neeman}, we have the following.

\begin{prop}\label{prop-PFA} Suppose that $\theta$ is supercompact and $F$ is a Laver function on $\theta$.
Let $G_\theta$ be a $V$-generic filter over $\mathbb P_\theta$.  Then $V[G_\theta]$ satisfies $\PFA$.
\qed \end{prop}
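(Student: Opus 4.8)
The plan is to run the standard Laver-function argument for forcing $\PFA$ from a supercompact cardinal in the side-condition framework, exactly as in \cite{Neeman}, and to check only that our two modifications --- the decoration $d_p$ and the requirement $h_p\subseteq h_q$ on extensions --- do not disturb it. The pure side-condition lemmas of \cite{Neeman} that that argument invokes have been replaced above by Lemma \ref{scl2} and Corollary \ref{fully reflect}, and the top-model and genericity lemmas for $\mathbb P_\theta$ are Lemmas \ref{top-model}, \ref{generic-condition}, and \ref{strongly-proper}; granting these, no new idea is required.

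First I would reduce the task. By a standard reduction it is enough to verify, in $V[G_\theta]$, the forcing axiom for proper posets $\mathbb Q$ of size at most $\aleph_1$ together with an arbitrary family $\{D_\xi:\xi<\omega_1\}$ of dense subsets of $\mathbb Q$: one passes from a general proper $\mathbb Q$ to the still proper suborder $\mathbb Q\cap N$, where $N$ is the union of a continuous $\omega_1$-chain of countable elementary submodels of a large $H_\chi$ with $\mathbb Q,\langle D_\xi\rangle_\xi\in N$, and a filter on $\mathbb Q\cap N$ meeting the sets $D_\xi\cap N$ generates one on $\mathbb Q$ meeting the $D_\xi$. Next I would record the facts about $\mathbb P_\theta$ that come from \cite{Neeman} together with Lemma \ref{strongly-proper}: $\mathbb P_\theta$ is $\calT^*$-strongly proper, hence preserves $\omega_1$; by Lemma \ref{continuity} every cardinal strictly between $\omega_1$ and $\theta$ is collapsed to $\aleph_1$, so $\theta=\omega_2$ in $V[G_\theta]$; and $\mathbb P_\theta$ has the $\theta$-chain condition. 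Thus a $\mathbb Q$ as above may be taken to have underlying set $\omega_1$, and $\mathbb Q$ and $\langle D_\xi:\xi<\omega_1\rangle$ are then coded by a $\mathbb P_\theta$-name which, by the $\theta$-chain condition together with the factoring of $\mathbb P_\theta$ over the complete suborders $\mathbb P_\alpha\cap V_\alpha$ ($\alpha\in T^*$) --- the analogue for $\mathbb P_\theta$ of the analysis of \S 3 --- may be taken to be a $\mathbb P_\beta\cap V_\beta$-name $\dot x$ for some $\beta\in T^*$, with $\mathbf 1_{\mathbb P_\beta\cap V_\beta}$ forcing the associated $\dot{\mathbb Q}$ to be proper.

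The heart of the matter is the Laver capturing. Using that $F$ is a Laver function on the supercompact $\theta$, I would, given a condition $p$ forcing the data just described, find densely below $p$ a condition $p'$ and an ordinal $\alpha\in T^*$ with $\beta<\alpha$, $V_\alpha\in\M_{p'}$, $p'\restriction\alpha$ in the $\mathbb P_\alpha\cap V_\alpha$-generic, and $F(\alpha)$ equal --- under a canonical name-isomorphism --- to $\dot x$ viewed as a $\mathbb P_\alpha\cap V_\alpha$-name, so that $\dot{\mathbb F}_\alpha=\dot{\mathbb Q}$. This is the usual reflection of the Laver guess: from an embedding $j\colon V\to M$ with critical point $\theta$ and $j(F)(\theta)=\dot x$ one uses that $\mathbb P_\theta$ is an initial segment of $j(\mathbb P)$ (as $j\restriction V_\theta=\mathrm{id}$ and $F=j(F)\restriction\theta$) which forces at stage $\theta$ with $\dot x$, and pulls this statement back through $j$ by elementarity. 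Having arranged $V_\alpha\in\M_p$ and $\dot{\mathbb F}_\alpha=\dot{\mathbb Q}$, I pass to $V[G_\theta]$ and put $G_\alpha=G_\theta\cap(\mathbb P_\alpha\cap V_\alpha)$, $\mathbb Q=\dot{\mathbb Q}[G_\alpha]$, and $H=\{w_q(\alpha)[G_\alpha]:q\in G_\theta,\ \alpha\in\dom w_q\}$; a routine density argument (extend a condition of $G_\theta$ so as to put $\alpha$ into the domain of its working part, then strengthen its value below a prescribed element of $\mathbb Q$) shows that $H$ is a filter on $\mathbb Q$. Let $V_{\alpha'}$ be the successor of $V_\alpha$ in $\M_{G_\theta}\cap\calT$; by Lemma \ref{continuity} the countable nodes of $\M_{G_\theta}$ strictly between $V_\alpha$ and $V_{\alpha'}$ form a continuous $\in$-chain $\langle M_\eta:\eta<\omega_1\rangle$ with union $V_{\alpha'}$ and with $M_\eta\cap\omega_1=\eta$ for club-many $\eta$. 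Since $\dot x\in V_\alpha\subseteq V_{\alpha'}$, we have $\dot x\in M_\eta$ for all large enough such $\eta$, hence $\langle D_\xi\rangle_\xi\in M_\eta[G_\alpha]$, so that for each $\xi<\omega_1$ there is $\eta>\xi$ with $D_\xi\in M_\eta[G_\alpha]$. Now for $q\in G_\theta$ with $M_\eta\in\M_q$ and $\alpha\in\dom w_q$, clause (3)(b) in the definition of $\mathbb P_\alpha$ gives that $q\restriction\alpha$ forces $w_q(\alpha)$ to be $(M_\eta[\dot G_\alpha],\mathbb F_\alpha)$-generic, so the set of $q'\le q$ that force $w_{q'}(\alpha)\in D_\xi$ is dense below $q$; by genericity of $G_\theta$ some element of $H$ lies in $D_\xi$. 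Hence $H$ meets every $D_\xi$, as required.

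The step I expect to be the main obstacle --- and the reason the detailed argument can be left to \cite{Neeman} --- is the verification that $\mathbb P_\theta$ is proper and has the $\theta$-chain condition in this modified setting: properness is the inductive assertion along $T^*\cup\{\theta\}$ that the conditions furnished by Lemma \ref{top-model} are $(M^*,\mathbb P_\theta)$-generic (Lemma \ref{generic-condition}), and it is precisely there that the interplay between the working parts, the decorations $d_p$, and the height functions $h_p$ must be controlled simultaneously. But, as remarked after the definition of $\mathbb P_\alpha$, once the pure side-condition lemmas of \cite{Neeman} are replaced by Lemma \ref{scl2} and Corollary \ref{fully reflect}, both established above, Neeman's arguments transfer essentially verbatim; the capturing argument and the remainder of the $\PFA$ verification then go through as sketched.
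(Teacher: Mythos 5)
Your proposal is correct and follows precisely the route the paper itself takes: the paper states the proposition without a proof body, citing Neeman, and you reproduce the standard Laver-function capturing argument in Neeman's framework together with the paper's own observation that the decoration $d_p$ and the height-function constraint $h_p\subseteq h_q$ are absorbed by Lemma~\ref{scl2} and Corollary~\ref{fully reflect}, so that Lemmas~\ref{strongly-proper}, \ref{top-model}, and~\ref{generic-condition} go through essentially verbatim. The reduction to posets of size $\aleph_1$, the use of the $\theta$-c.c.\ and the factoring over $\mathbb P_\alpha\cap V_\alpha$ to place the name in some $V_\beta$, the reflection of the Laver guess, and the final density argument via clause (3)(b) and Lemma~\ref{continuity} are exactly the components of the argument the paper defers to.
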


Now, if $\delta \in T^*$ and $G_\delta$ is a $V$-generic filter over $\mathbb P_\delta \cap V_\delta$, we can define
the function $h_{G_\delta}$ and the factor forcing $\mathbb Q_\delta$ as in \S 3.
Further, we define the set $\calS_\delta$ in an analogous way to Definition \ref{S-delta}
and show that it is stationary as in Lemma \ref{stationary}. We show, as in Lemma \ref{strongly-proper}
that $\bbQ_\delta$ is $\calT^*_\delta$-strongly proper. By Lemma \ref{top-model} and Lemma \ref{generic-condition}
we also get that, in $V[G_\delta]$, $\mathbb Q_\delta$ is $\calS^*_\delta$-proper.
For every subset $E$ of $\omega_1$ which belongs to $V[G_\delta]$, we define
the set $\calS_\delta(E)$ as in \S 3 and prove a version of Lemma \ref{characterization}.
Then, proceeding in the same way, for every $\gamma <\delta$ we define $\calS_\delta(E,\gamma)$
and prove an analog of Lemma \ref{gamma}. Then, turning to the final model $V[G_\theta]$, we prove
an analog of Lemma \ref{shrinking}. Finally, combining the arguments of Theorem \ref{pure-precipitous}
and Proposition \ref{prop-PFA} we get the conclusion.

\begin{thm}\label{main-thm} Suppose $\theta$ is supercompact and $F$ is a Laver function on $\theta$.
Let $G_\theta$ be $V$-generic over $\mathbb P_\theta$. Then, in  $V[G_\theta]$, $\PFA$ holds
and ${\rm NS}_{\omega_1}$ is not precipitous.
\qed
\end{thm}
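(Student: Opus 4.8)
The plan is to assemble Theorem~\ref{main-thm} from the three ingredients already developed in the paper: the iteration machinery of \cite{Neeman} (reproduced in Lemmas~\ref{strongly-proper}, \ref{top-model}, \ref{generic-condition} and Proposition~\ref{prop-PFA}), the factoring analysis of \S 3, and the game-theoretic argument of Theorem~\ref{pure-precipitous}. First I would invoke Proposition~\ref{prop-PFA}: since $\theta$ is supercompact and $F$ is a Laver function, forcing with $\mathbb P_\theta$ yields a model of $\PFA$. Then $\omega_1$ and $\theta$ are preserved, $\theta$ becomes $\omega_2$, and everything below is collapsed to $\aleph_1$, exactly as in the proof of Theorem~\ref{pure-precipitous}; the reason is that $\mathbb P_\theta$ is $\calS^*\cup\calT^*$-strongly proper (Lemma~\ref{strongly-proper}) and, by the continuity Lemma~\ref{continuity} carried over to the working-part poset, the generic $\in$-chain between consecutive transitive nodes has length $\omega_1$ at each step.

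Next I would transfer the factoring apparatus to $\mathbb P_\theta$. For $\delta\in T^*$, set $G_\delta=G_\theta\cap V_\delta$, define $h_{G_\delta}$ and the factor forcing $\mathbb Q_\delta$ exactly as in \S 3, and define $\calS_\delta$ as in Definition~\ref{S-delta}. The stationarity of $\calS_\delta$ is proved by the same density argument as Lemma~\ref{stationary}, now using Lemma~\ref{top-model} in place of Lemma~\ref{M_p*} and Lemma~\ref{generic-condition} to get the genericity of the relevant condition over the countable elementary submodel; the properness ingredients are Lemma~\ref{strongly-proper} for $\calT^*_\delta$-strong properness and Lemmas~\ref{top-model},~\ref{generic-condition} for $\calS^*_\delta$-properness of $\mathbb Q_\delta$ in $V[G_\delta]$. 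With these in hand, the analogs of Lemma~\ref{characterization} (the maximal condition in $\mathbb Q_\delta$ decides stationarity of $E\subseteq\omega_1$) and of Lemma~\ref{gamma} (for stationary $\calS_\delta(E)$ and any $\gamma<\delta$, the refined family $\calS_\delta(E,\gamma)$ is stationary) go through verbatim, since their proofs only used continuity, strong genericity over countable models, and the definition of the height function --- all of which are unchanged by the addition of the working parts.

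Then, working in the final model $V[G_\theta]$, I would prove the analog of Lemma~\ref{shrinking}: given a stationary $E\subseteq\omega_1$ and $\gamma<\theta$, use $\calT^*$-properness of $\mathbb P_\theta$ to find $\delta\in T^*\setminus(\gamma+1)$ with $V_\delta\in\M_{G_\theta}$ and $E\in V[G_\delta]$; then $\calS_\delta(E)$ is stationary in $V[G_\delta]$, and the reflected family $\calS_\delta(E,\gamma)$ is stationary by the analog of Lemma~\ref{gamma}, which lets us build inside a countable elementary submodel a condition in $\mathbb Q_\delta$ forcing $M\cap\omega_1$ into the intersection of any prescribed club with $\varphi(E,\gamma,\delta)$. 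Hence $\varphi(E,\gamma,\delta)$ is stationary. Finally I would run the strategy of Theorem~\ref{pure-precipitous} unchanged: Player~I in $\mathcal G_{\NS_{\omega_1}}$ maintains a sequence $(\gamma_n)$ of ordinals below $\theta$, starting with $E_0=\omega_1$, $\gamma_0=0$, and responding to Player~II's stationary set $E_{2n+1}$ by choosing $\gamma_{n+1}=\delta$ witnessing the shrinking lemma and playing $E_{2n+2}=\varphi(E_{2n+1},\gamma_n,\gamma_{n+1})$. Any $\xi\in\bigcap_n E_n$ would give $h_{G_\theta,\gamma_0}(\xi)>h_{G_\theta,\gamma_1}(\xi)>\cdots$, an infinite descending sequence of ordinals, so $\bigcap_n E_n=\emptyset$ and by Fact~\ref{game char} $\NS_{\omega_1}$ is not precipitous in $V[G_\theta]$.

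The main obstacle is the bookkeeping in the second step: one must check that attaching the working parts $w_p$ does not disturb any of the side-condition lemmas of \S\S 2--3 --- in particular that the reflection/restriction behavior of the height functions $\Ht_p$ (Lemmas~\ref{restriction},~\ref{scl2}) survives, that $\calS_\delta$ and $\calS_\delta(E,\gamma)$ remain stationary when the density arguments must now also arrange genericity for the $w_p$-coordinates over the countable model $M^*$, and that the factor forcing $\mathbb Q_\delta$ over $V[G_\delta]$ still satisfies the continuity property so that $\calS_\delta\cap V_\sigma$ contains a club between consecutive transitive nodes. All of these are handled by the observation, emphasized in \S 4, that since every condition in $\mathbb P_\theta$ is \emph{reflecting}, each $p$ genuinely extends $p\restriction\alpha$ for $V_\alpha\in\M_p$, so the interaction between working parts and side conditions is exactly as in \cite{Neeman} and the height-function requirements are orthogonal to it. Consequently each of the invoked analogs is a routine adaptation, and the proof reduces to citing the corresponding result of \S\S 2--4 at each point; I would therefore present this theorem's proof as a short assembly, with the details deferred to \cite{Neeman} and the earlier sections exactly as the surrounding text indicates.
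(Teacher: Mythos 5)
Your proposal assembles Theorem~\ref{main-thm} from exactly the same ingredients and in the same order as the paper: Proposition~\ref{prop-PFA} for $\PFA$, the transferred factoring analysis (stationarity of $\calS_\delta$, analogs of Lemmas~\ref{characterization}, \ref{gamma}, \ref{shrinking}), and the game strategy of Theorem~\ref{pure-precipitous}, with the key observation that reflecting conditions make the working-part additions orthogonal to the height-function machinery. This is essentially identical to the paper's own (deliberately brief) proof.
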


\bibliographystyle{plain}
\bibliography{pfa}

\end{document}